\definecolor{darkblue}{rgb}{0,0.1,0.5}
\newtheorem{theorem}{Theorem}[section]
\newtheorem{lemma}[theorem]{Lemma}
\newtheorem{proposition}[theorem]{Proposition}
\newtheorem{corollary}{Corollary}
\newtheorem{definition}{Definition}[section]
\newtheorem{example}{Example}[section]
\newtheorem{remark}{Remark}
\newtheorem{note}{Note}
\numberwithin{equation}{section}
\newcommand{\N}{\mathbb{N}}
\newcommand{\Z}{\mathbb{Z}}
\newcommand{\R}{\mathbb{R}}
\newcommand{\Leng}{\mathcal{L}}
\newcommand{\supp}{\mathrm{supp}}
\newcommand{\Path}{\mathrm{P}}
\newcommand{\neig}{\mathrm{N}}
\newcommand{\tree}{\mathbb{T}}
\newcommand{\complete}{\mathrm{K}}
\newcommand{\completeO}{\mathrm{K}^{\rotatebox[origin=c]{180}{$\circlearrowright$}}}
\newcommand{\Star}{\mathrm{S}}
\newcommand{\board}{\mathcal{G}}
\newcommand{\bVert}{\mathcal{V}}
\newcommand{\bEdg}{\mathcal{E}}
\newcommand{\const}{\mathrm{H}}
\newcommand{\cVert}{\mathrm{V}}
\newcommand{\cEdg}{\mathrm{E}}
\newcommand{\homspace}{\mathrm{Hom}(\board,\const)}
\newcommand{\Loops}{\mathrm{Loop}}
\newcommand{\dist}{\mathrm{dist}}
\newcommand{\Pos}{\mathcal{P}}
\newcommand{\Dom}{D}
\renewcommand{\min}{\mathrm{min}}
\renewcommand{\max}{\mathrm{max}}
\begin{document}

\title{Strong spatial mixing in homomorphism spaces}

\author{Raimundo Brice\~no}
\address{Raimundo Brice\~no\\Department of Mathematics\\The University of British Columbia\\1984 Mathematics Road\\Vancouver, B.C., V6T 1Z2\\Canada}
\email{raimundo@math.ubc.ca}

\author{Ronnie Pavlov}
\address{Ronnie Pavlov\\Department of Mathematics\\University of Denver\\2280 S. Vine St.\\Denver, CO 80208\\USA}
\email{rpavlov@du.edu}

\thanks{The second author acknowledges the support of NSF grant DMS-1500685.}

\keywords{Gibbs measures, graph homomorphisms, hard constraints.}

\subjclass[2010]{82B20, 68R10}

\maketitle

\begin{abstract}
Given a countable graph $\board$ and a finite graph $\const$, we consider $\homspace$ the set of graph homomorphisms from $\board$ to $\const$ and we study Gibbs measures supported on $\homspace$. We develop some sufficient and other necessary conditions on $\homspace$ for the existence of Gibbs specifications satisfying strong spatial mixing (with exponential decay rate). We relate this with previous work of Brightwell and Winkler, who showed that a graph $\const$ has a combinatorial property called dismantlability if and only if for every $\board$ of bounded degree, there exists a Gibbs specification with unique Gibbs measure. We strengthen their result by showing that this unique Gibbs measure can be chosen to have weak spatial mixing, but we also show that there exist dismantlable graphs for which no Gibbs measure has strong spatial mixing.
\end{abstract}

\tableofcontents

\section{Introduction}
\label{section1}

In the past decades, spatial mixing properties in spin systems have been of interest because of their many applications. The property known as \emph{weak spatial mixing (WSM)} is related with uniqueness of Gibbs measures on countable graphs and the absence of phase transitions. On the other hand, \emph{strong spatial mixing (SSM)}, which is a strengthening of WSM, has been connected with the existence of efficient approximation algorithms for thermodynamic quantities \cite{1-gamarnik,1-marcus,1-briceno}, FPTAS for counting problems which are \#P-hard \cite{1-bandyopadhyay,1-weitz,3-gamarnik} and mixing time of the Glauber dynamics in some particular systems \cite{1-jerrum,1-dyer}.

In \cite{1-brightwell}, Brightwell and Winkler did a complete study of the family of \emph{dismantlable graphs}, including several interesting alternative characterizations. Among the equivalences discussed in that work, many involved a countable graph $\board$ (the \emph{board}), a finite graph $\const$ (the \emph{constraint graph}, assumed to be dismantlable) and the set of all graph homomorphisms from $\board$ to $\const$, which we denote here by $\homspace$. We call such a set of graph homomorphisms a \emph{homomorphism space}. In this context, we should understand the set of vertices $\cVert(\const)$ as the set of spins in some spin system living on vertices of $\board$. The adjacencies given by the set of edges $\cEdg(\const)$ indicate the pairs of spins that are allowed to be next to each other in $\board$, and the edges that are missing can be seen as \emph{hard constraints} in our system (i.e. pair of spins that cannot be adjacent in $\board$). Examples of such systems are very common. If we consider $\board = \Z^2$ and $\const_\varphi$ with $\cVert(\const_\varphi) = \{0,1\}$ and $\cEdg(\const_\varphi)$ containing every edge but the loop connecting $1$ with itself, then $\mathrm{Hom}(\Z^2,\const_\varphi)$ represents the support of the well-known \emph{hard-square model}, i.e. the set of independent sets in $\Z^2$, the square lattice.

We are interested in \emph{combinatorial (or topological) mixing properties} that are satisfied by a homomorphism space $\homspace$, i.e. properties that allow us to ``glue'' together sets of spins in $\board$. For example, the homomorphism space $\mathrm{Hom}(\Z^2,\const)$, where $\cVert(\const) = \{0,1\}$ and $\const$ has a unique edge connecting $0$ with $1$, has only two elements, both checkerboard patterns of $0$s and $1$s. Then, this homomorphism space lacks good combinatorial mixing properties since, for example, it is not possible to ``glue'' two $0$s together which are separated by an odd distance horizontally or vertically. Note that this is not the case for $\mathrm{Hom}(\Z^2,\const_\varphi)$, where the only difference is that $\const_\varphi$ has in addition an edge connecting $0$ with itself. A gluing property which will be of particular interest is \emph{strong irreducibility}. In \cite{1-brightwell}, dismantlable graphs were characterized as the only graphs $\const$ such that $\homspace$ is strongly irreducible for every $\board$.

In addition, we can consider a \emph{n.n. interaction} $\Phi$, which is a function associating some ``energy'' to every vertex and edge of a constraint graph $\const$. From this we can construct a \emph{Gibbs $(\board,\const,\Phi)$-specification} $\pi$, which is an ensemble of probability measures supported in finite portions of $\board$. Specifications are a common framework for working with spin systems and defining \emph{Gibbs measures} $\mu$. From this point it is possible to start studying spatial mixing properties, which combine the geometry of $\board$, the structure of $\const$, and the distributions induced by $\Phi$. In \cite{1-brightwell}, dismantlable graphs were characterized as the only graphs $\const$ for which for every board $\board$ of bounded degree there exists a n.n. interaction $\Phi$ such that the Gibbs $(\board,\const,\Phi)$-specification $\pi$ has no phase transition (i.e. there is a unique Gibbs measure).

In this work we study the problem of existence of strong spatial mixing measures supported on homomorphism spaces. First, we extend the results of Brightwell and Winkler on uniqueness, by characterizing dismantlable graphs as the only graphs $\const$ for which for every board $\board$ of bounded degree there exists a n.n. interaction $\Phi$ such that the Gibbs $(\board,\const,\Phi)$-specification $\pi$ satisfies WSM (see Proposition \ref{dism-highRate}). Then we study strong spatial mixing on homomorphism spaces. We give sufficient conditions on $\const$ and $\homspace$ for the existence of Gibbs $(\board,\const,\Phi)$-specifications satisfying SSM. Since SSM implies WSM, a necessary condition for SSM to hold in every board $\board$ is that $\const$ is dismantlable. We exhibit examples showing that SSM is a strictly stronger property, in terms of combinatorial properties of $\const$ and $\homspace$, than WSM. In particular, there exist dismantlable graphs where SSM fails for some boards $\board$.

The paper is organized as follows. In Section \ref{section2} and Section \ref{section3}, we introduce the necessary background for studying homomorphism spaces and Gibbs specifications. In Section \ref{section4}, we introduce meaningful combinatorial properties for studying SSM and homomorphism spaces in general, where strong irreducibility and the \emph{topological strong spatial mixing property} of \cite{1-briceno} play a fundamental role. In Section \ref{section5}, we introduce the \emph{unique maximal configuration (UMC) property} on $\homspace$ and show that this property is sufficient for having a Gibbs specification satisfying SSM (and in some sense, with arbitrarily high decay rate of correlations). In Section \ref{section6}, we introduce a fairly general family of graphs $\const$, strictly contained in the family of dismantlable graphs, such that $\homspace$ satisfies the UMC property for every board $\board$ (and therefore, we can always find a Gibbs specification satisfying SSM supported on $\homspace$). In Section \ref{section7}, we provide a summary of relationships and implications among the properties studied. In Section \ref{section8}, we focus in the particular case where $\const$ is a (looped) tree $T$ and conclude that the properties on $T$ yielding WSM for some measure on $\mathrm{Hom}(\board,T)$ coincide with those yielding SSM. Finally, in Section \ref{section9}, we provide examples illustrating the qualitative difference between the combinatorial properties necessary for WSM and SSM to hold in spin systems.


\section{Definitions and preliminaries}
\label{section2}

\subsection{Graphs}

A \emph{graph} is an ordered pair $G = (V(G),E(G))$ (or just $G = (V,E)$), where $V$ is a countable set of elements called \emph{vertices}, and $E$ is contained in the set of unordered pairs $\left\{\{x,y\}: x,y \in V\right\}$, whose elements we call \emph{edges}. We denote $x \sim y$ (or $x \sim_G y$ if we want to emphasize the graph $G$) whenever $\{x,y\} \in E$, and we say that $x$ and $y$ are \emph{adjacent}, and that $x$ and $y$ are the \emph{ends} of the edge $\{x,y\}$. A vertex $x$ is said to have a \emph{loop} if $\{x,x\} \in E$. The \emph{set of looped vertices} of a graph $G$ will be denoted $\Loops(G) := \left\{x \in V: \{x,x\} \in E\right\}$. A graph will be called \emph{simple} if $\Loops(G) = \emptyset$ and \emph{finite} if $|G| < \infty$, where $|G|$ denotes the cardinality of $V(G)$.

Fix $n \in \N$. A \emph{path (of length $n$)} in a graph $G$ will be a finite sequence of distinct edges $\{x_0,x_1\},\{x_1,x_2\},\dots,\{x_{n-1},x_{n}\}$. A single vertex $x$ will be considered to be a path of length $0$. A \emph{cycle (of length $n$)} will be a path such that $x_0 = x_n$ (notice that a loop is a cycle). A vertex $y$ will be said to be \emph{reachable} from another vertex $x$ if there exists a path (of some length $n$) such that $x_0 = x$ and $x_n = y$. A graph will be said to be \emph{connected} if every vertex is reachable from any other different vertex, and a \emph{tree} if it is connected and has no cycles. A graph which is a tree plus possibly some loops, will be called a \emph{looped tree}.

For a vertex $x$, we define its \emph{neighbourhood} $\neig(x)$ as the set $\{y \in V: y \sim x\}$. A graph $G$ will be called \emph{locally finite} if $|\neig(x)| < \infty$, for every $x \in V$, and a locally finite graph will have \emph{bounded degree} if $\Delta(G) := \sup_{x \in V} |\neig(x)| < \infty$. In this case, we call $\Delta(G)$ the \emph{maximum degree} of $G$. Given $d \in \N$, a graph of bounded degree is \emph{$d$-regular} if $|\neig(x)| = d$, for all $x \in V$.

Given a graph $G = (V,E)$, we say that a graph $G' = (V',E')$ is a \emph{subgraph of $G$} if $V' \subseteq V$ and $E' \subseteq E$. For a subset of vertices $A \subseteq V$, we define the subgraph of $G$ \emph{induced by $A$} as $G[A] := (A,E[A])$, where $E[A] := \{\{x,y\} \in E: x,y \in A\}$. Given two disjoint sets of vertices  $A_1,A_2 \subseteq V$, we define $E[A_1:A_2] := \left\{\{x,y\} \in E: x \in A_1,y \in A_2\right\}$, i.e. the set of edges with one end in $A_1$ and the other end in $A_2$.

We will usually use the letters $u$, $v$, etc. for denoting vertices in a finite graph, and $x$, $y$, etc. in an infinite one.

\subsection{Boards and constraint graphs}

In this work, inspired by \cite{1-brightwell}, we will consider mainly two kinds of graphs:
\begin{enumerate}
\item a \emph{board} $\board = (\bVert,\bEdg)$: countable, simple, connected, locally finite graph with at least two vertices, and
\item a \emph{constraint graph} $\const = (\cVert,\cEdg)$: finite graph, where loops are allowed.
\end{enumerate}

Fix a board $\board = (\bVert,\bEdg)$. Then, for $x,y \in \bVert$, we can define a natural distance function
\begin{equation}
\dist(x,y) := \min\{n: \mbox{$\exists$ a path of length $n$ s.t.  $x = x_0$ and $x_n = y$}\}, 
\end{equation}
which can be extended to subsets $A,B \subseteq \bVert$ as $\dist(A,B) = \min_{x \in A, y \in B} \dist(x,y)$. We denote $A \Subset B$ whenever a finite set $A \subseteq \bVert$ is contained in an infinite set $B \subseteq \bVert$. When denoting subsets of $\bVert$ that are singletons, brackets will usually be omitted, e.g. $\dist(x,A)$ will be regarded to be the same as $\dist(\{x\},A)$. 

We define the \emph{boundary} of $A \subseteq \bVert$ as the set $\partial A := \left\{x \in \bVert: \dist(x,A) = 1\right\}$ (notice that if $x \in A$, then $\dist(x,A) = 0$), and the \emph{closure} of $A$ as $\overline{A} = A \cup \partial A$. Given $n \in \N$, we call $\neig_n(A) := \left\{x \in \bVert: \dist(x,A) \leq n\right\}$ the \emph{$n$-neighbourhood} of $A$ (notice that $\neig_0(A) = A$ and $\neig_1(x) = \neig(x) \cup \{x\}$).

\begin{example}
Given $d \in \N$, two boards are of special interest (see Figure \ref{boards}):
\begin{itemize}
\item The \emph{$d$-dimensional hypercubic lattice} $\Z^d = \left(\bVert(\Z^d),\bEdg(\Z^d)\right)$, which is the $d$-regular countable infinite graph, where
\begin{equation}
\begin{array}{ccc}
\bVert(\Z^d) = \Z^d,	&	\mbox{ and }	&	\bEdg(\Z^d) = \left\{\{x,y\}: x,y \in \Z^d, \|x-y\|=1\right\},
\end{array}
 \end{equation}
 with $\|x\| = \sum_{i=1}^{d}\left|x_i\right|$ the $1$-norm.
\item The \emph{$d$-regular tree} $\tree_d = \left(\bVert(\tree_d),\bEdg(\tree_d)\right)$, which is the unique simple graph that is a countable infinite $d$-regular tree. This board is also known as the \emph{Bethe lattice}.
\end{itemize}
\end{example}

\begin{figure}[ht]
\centering
\includegraphics[scale = 0.6]{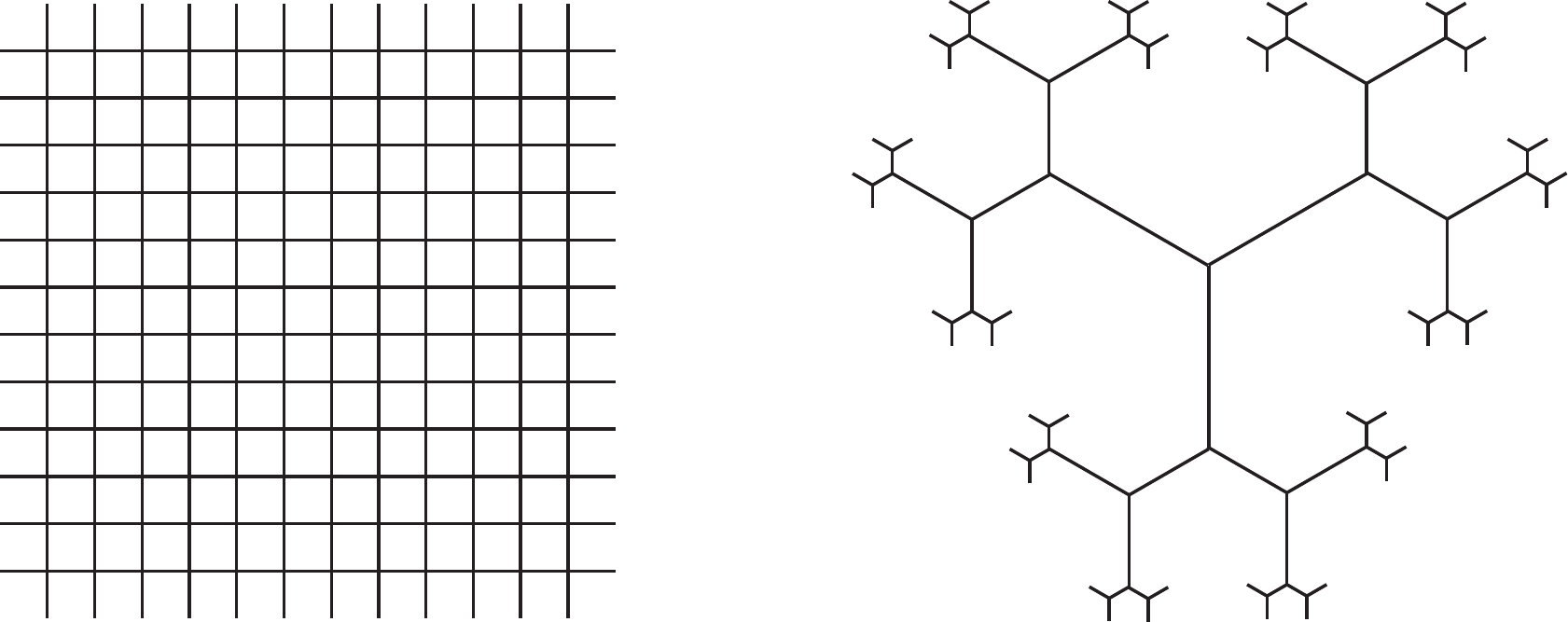} 
\caption{A sample of the boards $\Z^2$ and $\tree_3$.}
\label{boards}
\end{figure}

A difference between boards and constraint graphs is that the latter must be finite. Another one is that constraint graphs are allowed to have loops. Whenever we have a finite graph $G = (V,E)$, we will denote by $G^{\rotatebox[origin=c]{180}{$\circlearrowright$}} = (V,E^{\rotatebox[origin=c]{180}{$\circlearrowright$}})$ the graph obtained by adding loops to every vertex, i.e. $E^{\rotatebox[origin=c]{180}{$\circlearrowright$}} = E \cup \{\{x,x\}: x \in V\}$ and $\Loops(G^{\rotatebox[origin=c]{180}{$\circlearrowright$}}) = V$.

A finite graph will be called \emph{complete} if $x \sim y$ iff $x \neq y$. The complete graph with $n$ vertices will be denoted $\complete_n$ (notice that $\Loops(\complete_n) = \emptyset$). A finite graph will be called \emph{loop-complete} if $x \sim y$, for every $x,y$. Notice that the loop-complete graph with $n$ vertices is $\completeO_n$. The graphs $\complete_n$ and $\completeO_n$ are very important examples of constraint graphs, which relate to proper colourings of boards and unconstrained models, respectively (see Example \ref{models}).

\begin{figure}[ht]
\centering
\includegraphics[scale = 0.5]{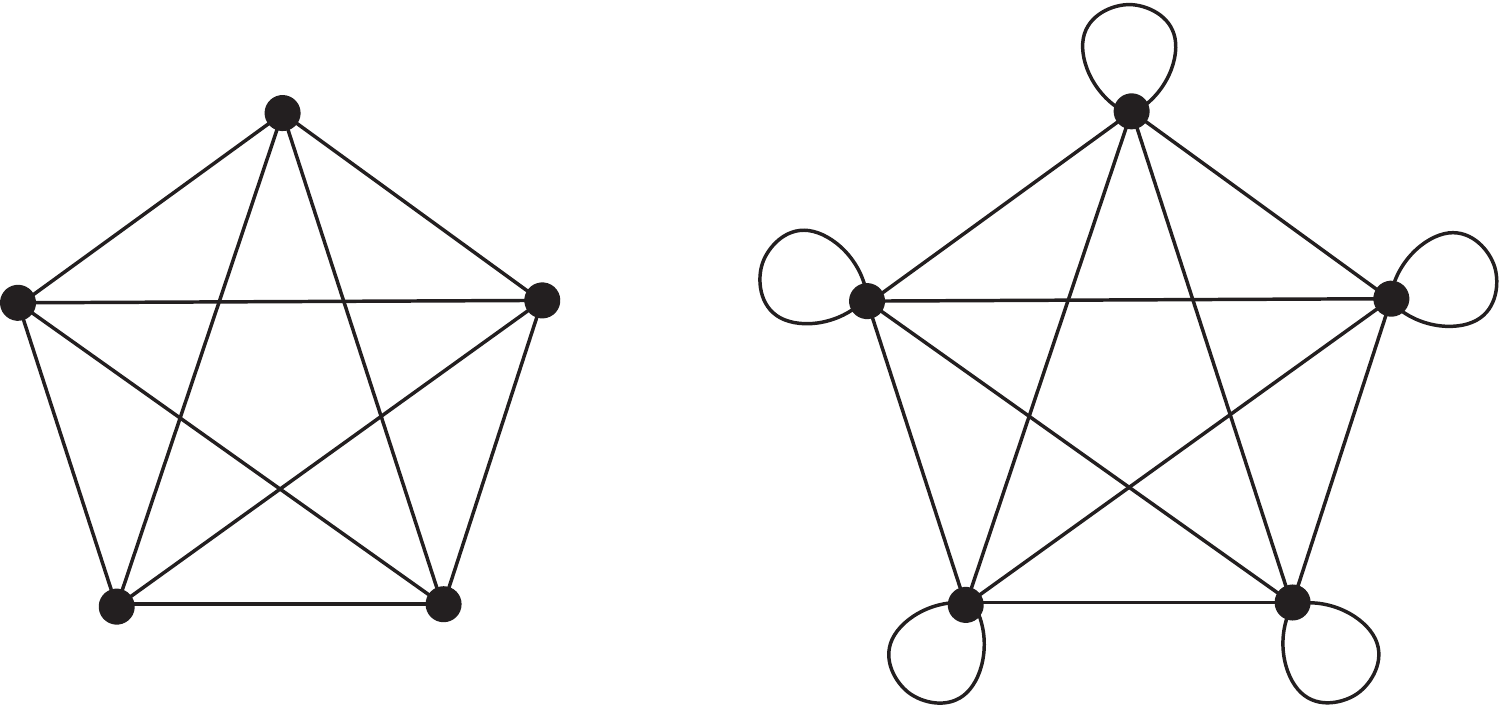} 
\caption{The graphs $\complete_n$ and $\mathrm{K}^{\protect\rotatebox[origin=c]{180}{$\circlearrowright$}}_n$, for $n = 5$.}
\label{complete}
\end{figure}

Other relevant examples are the following.

\begin{example}
The constraint graph given by:
\begin{equation}
\const_{\varphi} := \left(\{0,1\},\left\{\{0,0\},\{0,1\}\right\}\right),
\end{equation}
shown in Figure \ref{hard}, is related to the \emph{hard-core model} (see Example \ref{models}).

\begin{figure}[ht]
\centering
\includegraphics[scale = 0.5]{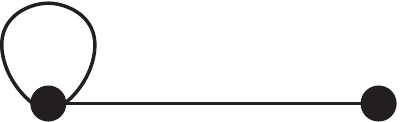} 
\caption{The graph $\const_{\varphi}$.}
\label{hard}
\end{figure}

Another one is, given $n \in \N$, the \emph{$n$-star graph}
\begin{equation}
\Star_n = \left(\{0,1,\dots,n\},\left\{\{0,1\},\dots,\{0,n\}\right\}\right).
\end{equation}

In addition, it will be useful to consider the graphs
\begin{equation}
\Star^{\rm o}_n = \left(V(\Star_n),E(\Star_n) \cup \{\{0,0\}\}\right)
\end{equation}
and $\Star^{\rotatebox[origin=c]{180}{$\circlearrowright$}}$ (see Figure \ref{stars}). Notice that $\const_{\varphi} = \Star^{\rm o}_1$.

\begin{figure}[ht]
\centering
\includegraphics[scale = 0.5]{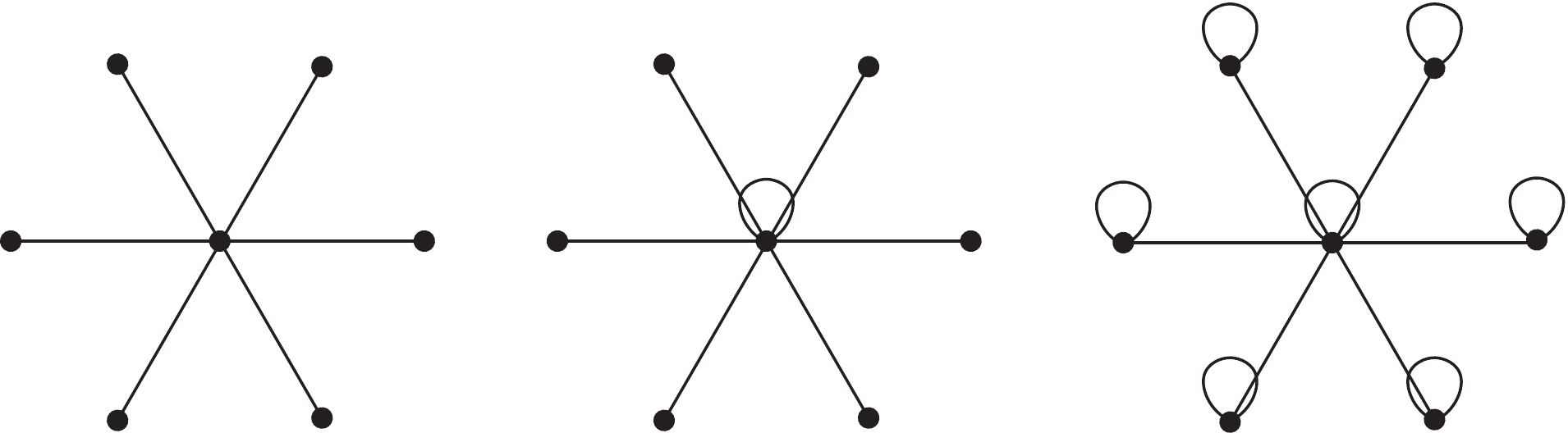} 
\caption{The graphs $\Star_6$, $\Star^{\rm o}_6$ and $\Star^{\protect\rotatebox[origin=c]{180}{$\circlearrowright$}}_6$.}
\label{stars}
\end{figure}
\end{example}

\subsection{Homomorphism spaces}

In this work we relate boards and constraint graphs via \emph{graph homomorphisms}. A graph homomorphism $\alpha: G_1 \to G_2$ from a graph $G_1 = (V_1,E_1)$ to a graph $G_2 = (V_2,E_2)$ is a mapping $\alpha: V_1 \to V_2$ such that
\begin{equation}
\{x,y\} \in E_1 \implies \{\alpha(x),\alpha(y)\} \in E_2.
\end{equation}

Given two graphs $G_1$ and $G_2$, we will denote by $\mathrm{Hom}(G_1,G_2)$ the set of all graph homomorphisms $\alpha: G_1 \to G_2$, from $G_1$ to $G_2$.

\subsubsection{Homomorphisms as configurations}

Fix a board $\board = (\bVert,\bEdg)$ and a constraint graph $\const = (\cVert,\cEdg)$. We will call the set $\homspace$ a \emph{homomorphism space}. In this context, the graph homomorphisms that belong to $\homspace$ will be called \emph{points} and denoted with the Greek letters $\omega$, $\upsilon$, etc. Notice that a point $\omega \in \homspace$ can be understood as a ``colouring'' of $\bVert$ with elements from $\cVert$ such that $x \sim_\board y \implies \omega(x) \sim_\const \omega(y)$. In other words, $\omega$ is a colouring of $\board$ that respects the constraints imposed by $\const$ with respect to adjacency.

\begin{example}
For $d \in \N$, two examples of homomorphism spaces are:
\begin{itemize}
\item $\mathrm{Hom}(\Z^d,\const_{\varphi})$, i.e. the set of elements in $\{0,1\}^{\Z^d}$ with no adjacent $1$s, and
\item	$\mathrm{Hom}(\tree_d,\complete_q)$, with $d \geq q$, i.e. the set of proper $q$-colourings of the $d$-regular tree.
\end{itemize}
\end{example}

Given $A \subseteq \bVert$, a \emph{configuration} will be any map $\alpha: A \to \cVert$ (i.e. $\alpha \in \cVert^A$), which will usually be denoted with the Greek letters $\alpha$, $\beta$, etc. The set $A$ is called the \emph{shape} of $\alpha$, and a configuration will be said to be \emph{finite} if its shape is finite. For any configuration $\alpha$ with shape $A$ and $B \subseteq A$, $\left.\alpha\right|_B$ denotes the restriction of $\alpha$ to $B$, i.e. the map from $B$ to $\cVert$ obtained by restricting the domain of $\alpha$ to $B$. For $A$ and $B$ disjoint sets, $\alpha \in \cVert^A$ and $\beta \in \cVert^B$, $\alpha\beta$ will be the configuration on $A \cup B$ defined by $\left.(\alpha\beta)\right|_A = \alpha$ and $\left.(\alpha\beta)\right|_B = \beta$. Notice that a point is a configuration with shape $\bVert$.

Given two configurations $\alpha_1, \alpha_2 \in \cVert^A$ and $B \subseteq A$, we define their \emph{set of $B$-di\-sa\-gree\-ment} as
\begin{equation}
\Sigma_B(\alpha_1,\alpha_2) := \left\{x \in B: \alpha_1(x) \neq \alpha_2(x) \right\}.
\end{equation}

\subsubsection{Locally/globally admissible configurations}

Fix a homomorphism space $\homspace$ and a set $A \subseteq \bVert$. A configuration $\alpha \in \cVert^A$ is said to be \emph{globally admissible} if there exists $\omega \in \homspace$ such that $\left.\omega\right|_A = \alpha$. A configuration $\alpha \in \cVert^A$ is said to be \emph{locally admissible} if $\alpha$ is a graph homomorphism from $\board[A]$ to $\const$, i.e. if $\alpha \in \mathrm{Hom}(\board[A],\const)$. A globally admissible configuration is also locally admissible, but the converse is false. In addition, notice that if a configuration $\alpha \in \cVert^A$ is globally (resp. locally) admissible, then $\left.\alpha\right|_B$ is also globally (resp. locally) admissible, for any $B \subseteq A$.

The \emph{language} $\Leng(\homspace)$ of a homomorphism space $\homspace$ is the set of all finite globally admissible configurations, i.e.
\begin{equation}
\Leng(\homspace) := \bigcup_{A \Subset \bVert} \Leng_A(\homspace),
\end{equation}
where $\Leng_A(\homspace) := \left\{\left.\omega\right|_A: \omega \in \homspace \right\}$, for $A \subseteq \bVert$. 

Given $A \subseteq \bVert$ and a configuration $\alpha \in \cVert^A$, we define the \emph{cylinder set $[\alpha]^\board_\const$} as
\begin{equation}
[\alpha]^\board_\const := \left\{\omega \in \homspace: \left.\omega\right|_A = \alpha\right\}.
\end{equation}

Notice that $\alpha \in \cVert^A$ is globally admissible iff $\alpha \in \Leng_A(\homspace)$ iff $[\alpha]^\board_\const \neq \emptyset$.


\section{Gibbs measures}
\label{section3}

\subsection{Constrained interactions and Gibbs specifications}

Given a constraint graph $\const$, a \emph{nearest-neighbour (n.n.) interaction $\Phi$} for $\const$ will be any function $\Phi: \cVert \cup \cEdg \to (-\infty,0]$. We will call the pair $(\const,\Phi)$ a \emph{constrained n.n. interaction}.

\begin{example}
\label{models}
Let $q \in \N$ and $\beta > 0$. Many constrained n.n. interactions represent well-known classical models. (In all of the following models, the parameter $\beta$ is classically referred to as the inverse temperature.)
\begin{itemize}
\item Ferromagnetic Potts $(\completeO_q,\beta\Phi^{\rm FP})$: $\left.\Phi^{\rm FP}\right|_{\cVert} \equiv 0$, $\Phi^{\rm FP}(\{u,v\}) = -\mathbbm{1}_{\{u = v\}}$.
\item Anti-ferromagnetic Potts $(\completeO_q,\beta\Phi^{\rm AP})$: $\left.\Phi^{\rm AP}\right|_{\cVert} \equiv 0$, $\Phi^{\rm AP}(\{u,v\}) = -\mathbbm{1}_{\{u \neq v\}}$.
\item Proper $q$-colourings $(\complete_q, \Phi^{\rm PC})$: $\left.\Phi^{\rm PC}\right|_{\cVert \cup \cEdg} \equiv 0$.
\item Hard-core $(\const_{\varphi},\beta\Phi_{\rm HC})$: $\Phi^{\rm HC}(0) = 0$, $\Phi^{\rm HC}(1) = -1$, $\left.\Phi^{\rm HC}\right|_{\cEdg} \equiv 0$.
\item Multi-type Widom-Rowlinson $(\mathrm{S}^{\rotatebox[origin=c]{180}{$\circlearrowright$}}_q,\beta\Phi^{\rm WR})$: $\Phi^{\rm WR}(v) = -\mathbbm{1}_{\{v \neq 0\}}$, $\left.\Phi^{\rm WR}\right|_{\cEdg} \equiv 0$.
\end{itemize}
\end{example}

Now, given a board $\board$ and a constrained n.n. interaction $(\const,\Phi)$, for any set $A \Subset \bVert$ and $\omega \in \homspace$, we define the \emph{energy function}
\begin{equation}
E^\Phi_{A,\omega}: \{\alpha \in \cVert^A: \alpha\left.\omega\right|_{A^c} \in \homspace\} \to \R
\end{equation}
as
\begin{align}
E^\Phi_{A,\omega}(\alpha) := \sum_{x \in A}\left(\Phi(\alpha(x)) + \frac{1}{2}\sum_{y \in A: y \sim x} \Phi(\alpha(x),\alpha(y)) + \sum_{y \in \partial A: y \sim x} \Phi(\alpha(x),\omega(y))\right).
\end{align}

Then, given $A \Subset \bVert$ and $\omega \in \homspace$, we can define a probability measure on $\Leng_A(\homspace)$ given by
\begin{equation}
\pi^\omega_A(\alpha) :=
\begin{cases}
\frac{1}{\mathrm{Z}^\Phi_{A,\omega}}e^{-E^\Phi_{A,\omega}(\alpha)}		&	\mbox{if } \alpha\left.\omega\right|_{A^c} \in \homspace,	\\
0														&	\mbox{otherwise,}
\end{cases}
\end{equation}
where
\begin{equation}
\mathrm{Z}^\Phi_{A,\omega} := \sum_{\alpha:  \alpha\left.\omega\right|_{A^c} \in \homspace} e^{-E^\Phi_{A,\omega}(\alpha)}
\end{equation}
is called the \emph{partition function}. For $B \subseteq A$ and $\beta \in \cVert^B$, we marginalize as follows:
\begin{equation}
\pi^\omega_A(\beta) = \sum_{\alpha \in \Leng_A(\homspace): \left.\alpha\right|_B = \beta} \pi^\omega_A(\alpha).
\end{equation}

The collection $\pi = \left\{\pi^\omega_A: A \Subset \bVert, \omega \in \homspace\right\}$ will be called \emph{Gibbs $(\board,\const,\Phi)$-specification}. If we take $\Phi \equiv 0$, then we call the Gibbs $(\board,\const,0)$-specification $\pi$, the \emph{uniform Gibbs specification} on $\homspace$ (see the case of proper $q$-colourings in Example \ref{models}).

\subsection{Gibbs measures}

A Gibbs $(\board,\const,\Phi)$-specification is regarded as a meaningful representation of an ideal physical situation where every finite volume $A$ in the space is in thermodynamical equilibrium the exterior. The extension of this idea to infinite volumes is via a particular class of probability measures on $\homspace$ called \emph{Gibbs measures}.

\subsubsection{Borel probability measures and Markov random fields}

Given a homomorphism space $\homspace$ and $A \subseteq \bVert$, we denote by $\mathcal{F}_A$ the $\sigma$-algebra generated by all the cylinder sets $[\alpha]^\board_\const$ with shape $A$, and we equip $\cVert^\bVert$ with the $\sigma$-algebra $\mathcal{F} = \mathcal{F}_\bVert$. A \emph{Borel probability measure} $\mu$ on $\cVert^\bVert$ is a measure such that $\mu(\cVert^\bVert) = 1$, determined by its values on cylinder sets of finite configurations. Given a cylinder set $[\alpha]^\board_\const$ and a measure $\mu$, we will just write $\mu(\alpha)$ for the value of $\mu([\alpha]^\board_\const)$, whenever $\board$ and $\const$ are understood. The \emph{support} of such a measure $\mu$ will be defined as
\begin{equation}
\supp(\mu) := \left\{\omega \in \homspace: \mu(\left.\omega\right|_A) > 0, \mbox{ for all } A \Subset \bVert\right\}.
\end{equation}

We will denote by $\mathcal{M}_{1}(\homspace)$ the set of all Borel probability measures whose support $\supp(\mu)$ is contained in $\homspace$.

\begin{definition}
A measure $\mu \in \mathcal{M}_{1}(\homspace)$ is a \emph{Markov random field (MRF)} if, for any subset $A \Subset \bVert$, any $\alpha \in \cVert^A$, any $B \Subset \bVert$ such that $\partial A \subseteq B \subseteq \bVert \setminus A$, and any $\beta \in \cVert^B$ with $\mu(\beta) > 0$, it is the case that
\begin{equation}
\mu\left(\alpha \middle\vert \beta\right) = \mu\left(\alpha \middle\vert \left.\beta\right|_{\partial A}\right).
\end{equation}
\end{definition}

In other words, an MRF is a measure where every finite configuration conditioned to its boundary is independent of the configuration on the complement.

\subsubsection{Nearest-neighbour Gibbs measures}

\begin{definition}
A \emph{nearest-neighbour (n.n.) Gibbs measure} for a Gibbs $(\board,\const,\Phi)$-spe\-ci\-fi\-ca\-tion $\pi$ is a measure $\mu \in \mathcal{M}_{1}(\homspace)$ such that for any $A \Subset \board$ and $\omega \in \homspace$ with $\mu(\left.\omega\right|_{\partial A}) > 0$, we have $\mathrm{Z}^\Phi_{A,\omega} > 0$ and
\begin{equation}
\label{DLR}
\begin{array}{cc}
\mathbb{E}_\mu(\mathbbm{1}_{[\alpha]^\board_\const} \vert \mathcal{F}_{A^c})(\omega) = \pi^\omega_A(\alpha)	&	 \mbox{ $\mu$-a.s.},
\end{array}
\end{equation}
for every $\alpha \in \Leng_A(\homspace)$.
\end{definition}

Notice that every n.n. Gibbs measure is an MRF because the formula for $\pi^\omega_A$ only depends on $\left.\omega\right|_{\partial A}$.

If $\homspace \neq \emptyset$, every Gibbs $(\board,\const,\Phi)$-specification $\pi$ has at least one n.n. Gibbs measure (special case of a result in \cite{1-dobrushin}, see also \cite{2-brightwell}). Often there are multiple n.n. Gibbs measures for a single $\pi$. This phenomenon is usually called a \emph{phase transition}. There are several conditions that guarantee uniqueness of n.n. Gibbs measures. Some of them fall into the category of \emph{spatial mixing} properties, introduced in the next section.


\section{Properties of a Gibbs $(\board,\const,\Phi)$-specification $\pi$}
\label{section4}

One of our main purposes in this work is to understand the combinatorial properties that constraint graphs $\const$ and homomorphism spaces $\homspace$ should satisfy in order to admit the existence of a Gibbs $(\board,\const,\Phi)$-specification $\pi$ with some specific measure-theoretical properties, here called \emph{spatial mixing} properties.

\subsection{Spatial mixing properties of $\pi$}

In the following, let $f:\N \rightarrow \R_{\geq 0}$ be a function such that $f(n) \searrow 0$ as $n \to \infty$, that will be referred as a \emph{decay function}. We will loosely use the term ``spatial mixing property'' to refer to any measure-theoretical property satisfied by $\pi$ defined via a decay of correlation of events (or configurations) with respect to the distance that separates the shapes where they are supported.

The first property introduced here, \emph{weak spatial mixing (WSM)}, has direct connections with the nonexistence of phase transitions and has been studied in several works, explicitly and implicitly (see \cite{1-brightwell,2-weitz}). The next one, \emph{strong spatial mixing (SSM)}, is a strengthening of WSM that also has connections with meaningful physical idealizations (see \cite{1-martinelli}) and has also proven to be useful for developing approximation algorithms (see \cite{1-weitz}). The constrained n.n. interactions with a unique Gibbs measure have been already studied and, to some extent, characterized (see the work of Brightwell and Winkler on \emph{dismantlable graphs} \cite{1-brightwell}); we will show later (see Theorem \ref{dism-charact}) that their proof also gives WSM of the Gibbs specification. The main aim of this work is to develop a somewhat analogous framework and sufficiently general conditions under which constrained n.n. interactions yield specifications which satisfy SSM.

\begin{definition}
A Gibbs $(\board,\const,\Phi)$-specification $\pi$ satisfies \emph{weak spatial mixing (WSM) with rate $f$} if for any $A \Subset \bVert$, $B \subseteq  A$, $\beta \in \cVert^B$ and $\omega_1,\omega_2 \in \homspace$,
\begin{equation}
\left| \pi_A^{\omega_1}(\beta) - \pi_A^{\omega_2}(\beta) \right| \leq \left|B\right|f(\dist(B,\partial A)).
\end{equation}
\end{definition}

We use the convention that $\dist(B,\emptyset) = \infty$. Considering this, we have the following definition, a priori stronger than WSM.

\begin{definition}
\label{SSMspec}
A Gibbs $(\board,\const,\Phi)$-specification $\pi$ satisfies \emph{strong spatial mixing (SSM) with rate $f$} if for any $A \Subset \bVert$, $B \subseteq  A$, $\beta \in \cVert^B$ and $\omega_1,\omega_2 \in \homspace$,
\begin{equation}
\left| \pi_A^{\omega_1}(\beta) - \pi_A^{\omega_2}(\beta) \right| \leq |B|f\left(\dist(B,\Sigma_{\partial A}(\omega_1,\omega_2))\right).
\end{equation}
\end{definition}

Notice that $\dist(B,\Sigma_{\partial A}(\omega_1,\omega_2)) \geq \dist(B,\partial A)$.

We will say that a specification $\pi$ satisfies WSM (resp. SSM) if it satisfies WSM (resp. SSM) with rate $f$, for some $f$ as before. For $\gamma > 0$, we will say that a specification $\pi$ satisfies \emph{exponential WSM} (resp. \emph{exponential SSM}) with decay rate $\gamma$ if it satisfies WSM (resp. SSM) with decay function $f(n) = Ce^{-\gamma n}$ for some $C > 0$.

\begin{lemma}[{\cite[Lemma 2.3]{2-marcus}}]
\label{ssmSing}
Let $\pi$ be a Gibbs $(\board,\const,\Phi)$-specification such that for any $A \Subset \bVert$, $x \in A$, $\beta \in \cVert^{\{x\}}$ and $\omega_1,\omega_2 \in \homspace$,
\begin{equation}
\left| \pi_A^{\omega_1}(\beta) - \pi_A^{\omega_2}(\beta) \right| \leq f\left(\dist(x,\Sigma_{\partial A}(\omega_1,\omega_2))\right).
\end{equation}

Then, $\pi$ satisfies SSM with rate $f$.
\end{lemma}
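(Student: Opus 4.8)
The plan is to deduce the general single-configuration-to-general-configuration bound by a telescoping argument over the sites of $B$. Fix $A \Subset \bVert$, $B \subseteq A$, a configuration $\beta \in \cVert^B$, and $\omega_1,\omega_2 \in \homspace$. Enumerate $B = \{x_1,\dots,x_k\}$ and write $B_j = \{x_1,\dots,x_j\}$, $B_0 = \emptyset$. The idea is that $\pi_A^{\omega}(\beta)$ can be expanded as a product of conditional single-site probabilities, $\pi_A^{\omega}(\beta) = \prod_{j=1}^{k} \pi_A^{\omega}\!\left(\beta(x_j) \,\middle|\, \beta|_{B_{j-1}}\right)$, where each conditional factor is itself of the form $\pi_{A'}^{\omega'}$ for a suitably modified specification obtained by conditioning on $\beta|_{B_{j-1}}$. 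Here one uses that conditioning a Gibbs specification on a finite configuration again yields (a restriction of) a Gibbs specification with the same interaction, so that the single-site hypothesis applies to each factor.

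The key steps, in order, would be: first, establish the chain-rule factorization above and observe that each factor $\pi_A^{\omega_i}(\beta(x_j)\mid\beta|_{B_{j-1}})$ lies in $[0,1]$; second, apply the standard ``difference of products'' estimate
\begin{equation*}
\left|\prod_{j=1}^k a_j - \prod_{j=1}^k b_j\right| \leq \sum_{j=1}^k |a_j - b_j|,
\end{equation*}
valid when all $a_j,b_j \in [0,1]$, with $a_j,b_j$ the $j$-th conditional factors for $\omega_1,\omega_2$ respectively; third, apply the single-site hypothesis to each term $|a_j - b_j|$. The subtlety is that after conditioning on $\beta|_{B_{j-1}}$, the relevant single-site marginal is for the point $x_j$ in the region $A$ with the boundary still $\partial A$ (conditioning on sites inside $A$ does not change the outer boundary), and the disagreement set $\Sigma_{\partial A}(\omega_1,\omega_2)$ is unchanged. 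Since $x_j \in B$ we have $\dist(x_j, \Sigma_{\partial A}(\omega_1,\omega_2)) \geq \dist(B,\Sigma_{\partial A}(\omega_1,\omega_2))$, and because $f$ is nonincreasing, $f(\dist(x_j,\Sigma_{\partial A}(\omega_1,\omega_2))) \leq f(\dist(B,\Sigma_{\partial A}(\omega_1,\omega_2)))$. Summing over the $k = |B|$ terms gives the bound $|B| f(\dist(B,\Sigma_{\partial A}(\omega_1,\omega_2)))$.

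I would expect the main obstacle to be justifying cleanly that the conditioned single-site marginal $\pi_A^{\omega_i}(\cdot \mid \beta|_{B_{j-1}})$ fits the hypothesis of the lemma — i.e., that it equals $\tilde\pi_{A'}^{\omega_i'}$ for some Gibbs specification $\tilde\pi$ of the form covered by the statement. One must check that conditioning the finite-volume Gibbs distribution on $A$ on the values at $B_{j-1}$ produces a Gibbs distribution on $A \setminus B_{j-1}$ with boundary condition given by $\omega_i$ on $\partial A$ together with $\beta|_{B_{j-1}}$ on its part of the new boundary, and crucially that the disagreement of the two boundary conditions is still exactly $\Sigma_{\partial A}(\omega_1,\omega_2)$ (the newly exposed sites in $B_{j-1}$ carry the common configuration $\beta|_{B_{j-1}}$, hence contribute nothing to the disagreement set). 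Once this bookkeeping is in place the rest is the routine product estimate above; alternatively, since the lemma is only invoked as a black box later, one could instead phrase it directly in terms of the specification's own conditional probabilities $\pi_A^{\omega}(\beta(x_j) \mid \beta|_{B_{j-1}})$ without ever re-indexing the region, noting that the single-site hypothesis was already stated for the family $\{\pi_A^\omega\}$ and that these conditional quantities are ratios of such marginals — but keeping $A$ fixed and only changing the conditioning event is the cleanest route, since then $\partial A$ and $\Sigma_{\partial A}(\omega_1,\omega_2)$ never move.
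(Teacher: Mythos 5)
The paper does not actually prove this lemma itself---it cites \cite[Lemma 2.3]{2-marcus} and only remarks that the generalization from MRFs on $\Z^d$ to specifications on general boards is direct---but your chain-rule/telescoping argument, with the conditioned single-site marginals rewritten as $\pi_{A \setminus B_{j-1}}^{\omega_i'}$ for points $\omega_i'$ agreeing with $\beta$ on $B_{j-1}$ (so that the boundary disagreement set only shrinks), is exactly the argument used there and is correct. The one point you leave implicit is the degenerate case where some $\pi_A^{\omega_i}(\beta|_{B_{j-1}})$ vanishes, so that a later conditional factor is undefined; this is handled routinely by stopping the telescoping at the first such index $j$, where the single-site hypothesis still bounds $|a_j - 0|$, after which the corresponding product is zero.
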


\begin{remark}
The proof of Lemma \ref{ssmSing} given in \cite{2-marcus} is for MRFs $\mu$ satisfying exponential SSM with $\board = \Z^d$, but its generalization to specifications and more general boards is direct. We don't know if there is an analogous lemma for WSM.
\end{remark}

If a Gibbs $(\board,\const,\Phi)$-specification $\pi$ satisfies WSM, then there is a unique n.n. Gibbs measure $\mu$ for $\pi$ (see \cite{2-weitz}).

\begin{example}
There are some well-known Gibbs specifications that satisfy exponential SSM (and therefore, WSM). Recall the constrained n.n. interactions introduced in Example~\ref{models}.
\begin{itemize}
\item $(\Z^2,\completeO_q,\beta\Phi^{\rm FP})$, for any $q \in \N$ and small enough $\beta$ (see \cite{1-adams,1-berg}).
\item $(\Z^2,\completeO_q,\beta\Phi^{\rm AP})$ for $q \geq 6$ and any $\beta > 0$ (see \cite{2-goldberg}).
\item $(\tree_d,\complete_q,0)$, for $q \geq 1 + \delta^* d$, where $\delta^* = 1.763\dots$ is the unique solution to $xe^{-1/x} = 1$ (see \cite{1-ge,1-goldberg}).
\item $(\board,\const_\varphi,\beta\Phi^{\rm HC})$, for any $\board$ and $\beta$ such that $\Delta(\board) \leq d$ and $e^{\beta} < \lambda_{\rm c}(d) := \frac{(d-1)^{(d-1)}}{(d-2)^d}$ (see \cite{1-weitz}).
\item $(\Z^d,\mathrm{S}^{\rotatebox[origin=c]{180}{$\circlearrowright$}}_q,\beta\Phi^{\rm WR})$, for any $q \in \N$ and small enough $\beta$ (see \cite{1-adams,1-berg}).
\end{itemize}
\end{example}

There are more general sufficient conditions for having exponential SSM (for instance, see the discussion in \cite{2-marcus}).

\subsection{Graph-theoretical properties of $\const$}

Here we introduce some structural properties concerning constraint graphs, which will later be shown to have various implications for homomorphism spaces $\homspace$ and Gibbs $(\board,\const,\Phi)$-specifications.

The first property is the existence of a special vertex which is adjacent to every other vertex (including itself).

\begin{definition}
Given a constraint graph $\const$, we say that $s \in \cVert$ is a \emph{safe symbol} if $\{s,v\} \in \cEdg$, for every $v \in \cVert$.
\end{definition}

\begin{example}
The constraint graph $\const_{\varphi}$ has a safe symbol (see Figure \ref{hard}).
\end{example}

The next definition is a structural description of a class of graphs introduced in \cite{1-nowakowski} and heavily studied and characterized in \cite{1-brightwell}.

\begin{definition}
Given a constraint graph $\const$ and $u,v \in \cVert$ such that $\neig(u) \subseteq \neig(v)$, a \emph{fold} is a homomorphism $\alpha: \const \to \const[\cVert \setminus \{u\}]$ such that $\alpha(u) = v$ and $\left.\alpha\right|_{\cVert \setminus \{u\}} = \left.\mathrm{id}\right|_{\cVert \setminus \{u\}}$.

A constraint graph $\const$ is \emph{dismantlable} if there is a sequence of folds reducing $\const$ to a graph with a single vertex (with or without a loop).
\end{definition}

Notice that a fold $\alpha: \const \to \const[\cVert \setminus \{u\}]$ amounts to just removing $u$ and edges containing it from the graph 
$\const$, as long as a suitable vertex $v$ exists which can ``absorb'' $u$.

The following proposition is a good example of the kind of results that we aim to achieve.

\begin{proposition}
Let $\const$ be a constraint graph. Then, $\const$ is dismantlable iff for every board $\board$ of bounded degree, there exists a n.n. interaction $\Phi$ such that the Gibbs $(\board,\const,\Phi)$-specification satisfies exponential WSM with arbitrarily high decay rate.
\end{proposition}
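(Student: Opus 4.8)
The plan is to prove the two directions separately, with the forward direction (dismantlable $\implies$ existence of a WSM specification with arbitrarily high rate) being the substantial one, and the converse being essentially a soft consequence of earlier discussion together with the Brightwell--Winkler characterization.

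For the converse, suppose that for every board $\board$ of bounded degree there is a n.n. interaction $\Phi$ whose Gibbs $(\board,\const,\Phi)$-specification satisfies exponential WSM. Since WSM of $\pi$ implies uniqueness of the n.n. Gibbs measure (as noted in the excerpt, citing \cite{2-weitz}), this shows that for every bounded-degree $\board$ there is a n.n. interaction with a unique Gibbs measure, which is precisely the Brightwell--Winkler characterization of dismantlability. So $\const$ is dismantlable. (One should be a little careful that a WSM specification exists with $\homspace$ nonempty; but a constant homomorphism into a looped vertex, or the structure forced by a fold sequence, guarantees $\homspace\neq\emptyset$ whenever $\const$ is nonempty, and if $\homspace=\emptyset$ there is nothing to mix.)

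For the forward direction, assume $\const$ is dismantlable via a fold sequence $\const = \const_0 \to \const_1 \to \cdots \to \const_k$, where $\const_k$ is a single vertex. The idea is to build an interaction $\Phi$ that very strongly favors a particular distinguished configuration, making the specification ``almost deterministic'' away from the boundary. Concretely, I would use the fold sequence to pick a ``retraction target'': folds compose to give a homomorphism (retraction) $\rho:\const\to\const_k$-image, i.e. a vertex $s\in\cVert$ together with a consistent way of pushing every vertex toward $s$ along $\neig(u)\subseteq\neig(v)$ steps. Then I would define $\Phi$ to assign energy $0$ to $s$ (and to the edges among the ``core'' preferred vertices) and energy $-\beta$ (large positive energy cost, so weight $e^{-\beta}\to 0$) to every other vertex, with $\beta$ a free parameter that we will send to $+\infty$. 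The key structural input is that dismantlability lets us ``correct'' any boundary condition $\omega$ toward the preferred configuration one fold at a time: given $\omega$ on $\partial A$, the fold $\alpha_1:\const\to\const_1$ can be applied on a layer near $\partial A$ to replace $u$-values by $v$-values without violating any constraint, then $\alpha_2$ on the next layer, and so on; after $k$ layers every vertex has been pushed to $\const_k$'s vertex $s$. This shows that a deep enough region around a site $x$ can always be forced to the preferred configuration regardless of $\omega|_{\partial A}$, and the energy penalty $e^{-\beta}$ makes configurations that disagree with the preferred one on a positive-distance shell exponentially unlikely in $\beta$.

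The main technical step is then a quantitative single-site estimate feeding into Lemma \ref{ssmSing} (in its WSM form — here $\Sigma_{\partial A}$ is replaced by $\partial A$, so we just need the weak version, which is the elementary ``union over sites'' bound already built into the definition of WSM). Fix $A\Subset\bVert$, $x\in A$, $\beta\in\cVert^{\{x\}}$, and two boundary conditions $\omega_1,\omega_2$. Writing $n=\dist(x,\partial A)$, I want $|\pi_A^{\omega_1}(\beta)-\pi_A^{\omega_2}(\beta)|\le C e^{-\gamma n}$ with $\gamma=\gamma(\beta_{\mathrm{param}})$ growing without bound as the interaction parameter grows. To get this, I would condition on the configuration restricted to the shell $\partial \neig_{n-1}(x)$ (or an annulus of width $k$): by the fold/correction argument, the conditional distribution inside $\neig_{n-k-1}(x)$ given this shell does not depend on $\omega_i$, so the only contribution to the difference comes from shell configurations that are ``non-core'', and using $\Delta(\board)\le\Delta$ these carry total probability at most (number of shell sites)$\cdot e^{-c\beta_{\mathrm{param}}}$, which after summing a geometric series over the successive annuli between $x$ and $\partial A$ gives a bound of the form $C(\Delta)\,\big(C'\Delta\, e^{-c\beta_{\mathrm{param}}}\big)^{\lfloor n/k\rfloor}$. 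Choosing $\beta_{\mathrm{param}}$ large (depending on $\Delta$ and $\const$) makes the base of this exponential as small as we like, hence the decay rate $\gamma$ as large as we like, uniformly over $A$, $x$, $\beta$, $\omega_1,\omega_2$; then Lemma \ref{ssmSing} (WSM version) upgrades this to WSM of $\pi$ with the required rate. The part I expect to need the most care is making the ``correction by folds'' argument genuinely independent of the boundary data and compatible with the Markov (conditional-on-shell) decomposition — i.e., verifying that after folding one obtains a valid homomorphism of $\board[\neig_m(x)\cup\text{shell}]$ into $\const$ that matches the desired core configuration and still agrees with the given shell values, so that the probabilistic comparison of $\pi_A^{\omega_1}$ and $\pi_A^{\omega_2}$ through the shell is legitimate; the energy/partition-function bookkeeping (that $\mathrm{Z}^\Phi_{A,\omega_i}$ is bounded below because the corrected configuration has bounded energy) is routine once that combinatorial fact is in place.
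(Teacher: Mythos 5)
Your converse direction is fine and matches the paper (WSM implies uniqueness, then invoke Brightwell--Winkler), and your choice of interaction is essentially the right one: the paper takes a persistent vertex $v^*$ (a looped vertex to which the fold sequence retracts $\const$) and sets $\Phi_\lambda(v^*)=-\log\lambda$, with the combinatorial input being exactly your ``correction by folds'' step, namely Lemma \ref{persistent} (\cite[Lemma 5.2]{1-brightwell}): any point can be modified within $\neig_{|\const|-2}(A)$ to equal $v^*$ on $A$ while only enlarging the set of $v^*$-sites. (Minor point: since interactions take values in $(-\infty,0]$, you must put negative energy on the preferred vertex rather than positive energy on the others; your ``energy $-\beta$ gives weight $e^{-\beta}$'' has the sign backwards, but the intent is recoverable.)

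The genuine gap is in the probabilistic step that is supposed to produce decay in $n=\dist(x,\partial A)$. Conditioning on a single separating shell $S$ already makes the two conditional laws inside $S$ \emph{identical} by the Markov property, so the entire difference $|\pi_A^{\omega_1}(\beta)-\pi_A^{\omega_2}(\beta)|$ is controlled by the total variation distance of the two shell marginals; the fold argument bounds this by (size of $S$)$\cdot C\lambda^{-1}$, which is small in $\lambda$ but completely independent of $n$. Iterating over successive annuli gains nothing, because after conditioning on the outermost shell there is no remaining discrepancy to contract --- so your claimed bound $\bigl(C'\Delta e^{-c\beta}\bigr)^{\lfloor n/k\rfloor}$ does not follow from the argument as sketched; the events ``shell $S_j$ is not all-$s$'' for different $j$ would have to be shown to compose multiplicatively under a single coupling, and that is precisely the content of a nontrivial theorem. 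The paper supplies this via the van den Berg--Maes disagreement-percolation coupling (Theorem \ref{vandenberg}): disagreement at $x$ forces a path of disagreement from $x$ to $\partial A$, of length at least $n-|\const|+2$; along such a path at least half the sites of one of the two configurations differ from $v^*$, and Lemma \ref{boundDismant} bounds that event by $|\const|^{k\Delta^{|\const|-1}}\lambda^{-k/2}$ for a path of length $k$; a union bound over the at most $\Delta(\Delta-1)^k$ such paths then yields exponential decay with rate tending to infinity as $\lambda\to\infty$. Without this coupling (or an equivalent construction), your argument only establishes a uniform $O(\lambda^{-1})$ bound, not exponential WSM with arbitrarily high rate. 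Note also that the paper explicitly remarks it does not know a WSM analogue of Lemma \ref{ssmSing}; this is harmless here since the $|B|$ factor in the WSM definition lets one use the elementary union bound over $x\in B$ directly, as the paper does.
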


See Proposition \ref{dism-charact} for a proof of this result, as we remark there, it is essentially due to Brightwell and Winkler \cite{1-brightwell}. One of our goals is to prove similar statements in which ``WSM'' is replaced by ``SSM.''

\subsection{Combinatorial properties of $\homspace$}

\begin{definition}
A homomorphism space $\homspace$ is said to be \emph{strongly irreducible with gap $g \in \N$} if for any pair of nonempty (disjoint) finite subsets $A,B \Subset \bVert$ such that $\dist(A,B) \geq g$, and for every $\alpha \in \cVert^A$, $\beta \in \cVert^B$,
\begin{equation}
[\alpha]^\board_\const, [\beta]^\board_\const \neq \emptyset \implies [\alpha\beta]^\board_\const \neq \emptyset.
\end{equation}
\end{definition}

\begin{remark}
Since a homomorphism space is a compact space, it does not make a difference if the shapes of $A$ and $B$ are allowed to be infinite in the definition of strong irreducibility.
\end{remark}

Now we proceed to adapt to the context of homomorphism spaces a combinatorial property that was originally introduced in \cite{1-briceno} in the context of \emph{$\Z^d$ shift spaces}. This condition was used in \cite{1-briceno} to give a partial characterization of systems in $\board = \Z^d$ that admit n.n. Gibbs measures satisfying SSM, with special emphasis in the case $d=2$.

\begin{definition}
A homomorphism space $\homspace$ is \emph{topologically strong spatial mixing (TSSM) with gap $g \in \N$}, if for any $A,B,S \Subset \bVert$ such that $\dist(A,B) \geq g$, and for every $\alpha \in \cVert^A$, $\beta \in \cVert^B$ and $\sigma \in \cVert^S$,
\begin{equation}
[\alpha \sigma]^\board_\const,[\sigma \beta]^\board_\const \neq \emptyset \implies [\alpha \sigma \beta]^\board_\const \neq \emptyset.
\end{equation}
\end{definition}

Note that TSSM with gap $g$ implies strong irreducibility with gap $g$ (by taking $S = \emptyset$). Clearly, strong irreducibility (resp. TSSM) with gap $g$ implies strong irreducibility (resp. TSSM) with gap $g+1$. We will say that a shift space satisfies strong irreducibility (resp. TSSM) if it satisfies strong irreducibility (resp. TSSM) with gap $g$, for some $g \in \N$.

A useful tool when dealing with TSSM is the next lemma, which states that if we have the TSSM property for single vertices, then we have it uniformly (in terms of separation distance) for any pair of finite sets $A$ and $B$.

\begin{lemma}[{\cite{1-briceno}}]
\label{lemSing}
Suppose that $g \in \N$ and $\homspace$ is a homomorphism space such that for every pair of sites $x,y \in \bVert$ with $\dist(x,y) \geq g$, $S \Subset \bVert$, and $\alpha \in \cVert^{\{x\}}$, $\beta \in \cVert^{\{y\}}$, $\sigma \in \cVert^{S}$ with $[\alpha\sigma]^\board_\const, [\sigma\beta]^\board_\const \neq \emptyset$, it is the case that $[\alpha\sigma\beta]^\board_\const \neq \emptyset$. Then, $\homspace$ satisfies TSSM with gap $g$.
\end{lemma}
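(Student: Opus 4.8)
The plan is to bootstrap from the single-vertex hypothesis to arbitrary finite sets $A$, $B$ by an induction on $|A| + |B|$, peeling off one vertex at a time and invoking the hypothesis together with compactness to re-glue. The base case $|A| = |B| = 1$ is precisely the assumption. For the inductive step, suppose $|A| \geq 2$ (the case $|B| \geq 2$ is symmetric), write $A = A' \cup \{x_0\}$ with $x_0 \in A$ and $A' = A \setminus \{x_0\}$ nonempty, and suppose we are given $\alpha \in \cVert^A$, $\beta \in \cVert^B$, $\sigma \in \cVert^S$ with $[\alpha\sigma]^\board_\const, [\sigma\beta]^\board_\const \neq \emptyset$ and $\dist(A,B) \geq g$. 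Decompose $\alpha = \alpha'\alpha_0$ where $\alpha' = \left.\alpha\right|_{A'}$ and $\alpha_0 = \left.\alpha\right|_{\{x_0\}}$.

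The key trick is to absorb $x_0$ into the ``$\sigma$-part.'' First, since $[\alpha\sigma]^\board_\const = [\alpha'\alpha_0\sigma]^\board_\const \neq \emptyset$, the configuration $\alpha_0\sigma$ (on shape $\{x_0\} \cup S$, which we may assume disjoint from $A'$ and $B$ by noting that the statement is vacuous otherwise, or rather by treating overlaps consistently) is globally admissible, i.e. $[\alpha_0\sigma]^\board_\const \neq \emptyset$; and likewise $[\sigma\beta]^\board_\const \neq \emptyset$. Apply the inductive hypothesis to the sets $A'$ and $B$, the separating configuration $\sigma' := \alpha_0\sigma$ on $\{x_0\} \cup S$, and the boundary configurations $\alpha'$, $\beta$: we have $\dist(A',B) \geq \dist(A,B) \geq g$, and $[\alpha'\sigma']^\board_\const = [\alpha'\alpha_0\sigma]^\board_\const = [\alpha\sigma]^\board_\const \neq \emptyset$ while $[\sigma'\beta]^\board_\const$ requires $[\alpha_0\sigma\beta]^\board_\const \neq \emptyset$ — which is exactly what we would like, but do not yet have. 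So this direct approach stalls, and the genuine content of the proof is obtaining $[\alpha_0\sigma\beta]^\board_\const \neq \emptyset$ first.

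To get that, apply the \emph{single-vertex} hypothesis with the pair of sites $x_0 \in A$ and (ranging over) $y \in B$: the issue is that $B$ need not be a singleton, so one must first reduce $B$ as well. This suggests the cleaner route is a simultaneous induction: reduce whichever of $A$, $B$ has more than one vertex, but when reducing $A$ to $A' \cup \{x_0\}$, merge $x_0$ into $S$ to form $\tilde S = S \cup \{x_0\}$ with $\tilde\sigma = \sigma\alpha_0$, and then the inductive hypothesis applied to $(A', B, \tilde S)$ needs the premise $[\tilde\sigma\beta]^\board_\const \neq \emptyset$, i.e. $[\sigma\alpha_0\beta]^\board_\const \neq \emptyset$. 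Establish this sub-claim by a symmetric reduction of $B$ down to a single vertex using the single-vertex hypothesis repeatedly (here $A$ is already the singleton $\{x_0\}$, so no further obstruction arises), together with compactness of $\homspace$ to pass from consistency of all finite sub-gluings to a genuine point — this is the standard argument that a finitely-consistent family of cylinder constraints in a shift space has nonempty intersection. Once $[\tilde\sigma\beta]^\board_\const \neq \emptyset$ is in hand, the inductive hypothesis on $(A', B, \tilde S)$ yields $[\alpha'\tilde\sigma\beta]^\board_\const = [\alpha\sigma\beta]^\board_\const \neq \emptyset$, closing the induction.

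The main obstacle is organizing this double induction so that reducing one of the two sets does not inflate the other: the fix is precisely that a removed vertex always goes into $S$ (which carries no distance constraint), so $|A| + |B|$ strictly decreases at each step while $\dist(A,B) \geq g$ is preserved (it can only increase when a set shrinks). A secondary technical point is the passage from "every finite extension is consistent" to "there is a point of $\homspace$," which is where compactness of $\homspace \subseteq \cVert^\bVert$ in the product topology — equivalently, the fact noted in the Remark after the definition of strong irreducibility — is used; one should state this explicitly as: if $[\gamma_F]^\board_\const \neq \emptyset$ for every finite $F \supseteq$ a fixed finite set, with the $\gamma_F$ a coherent family, then $\bigcap_F [\gamma_F]^\board_\const \neq \emptyset$ by the finite intersection property for the compact space $\homspace$. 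I expect the bookkeeping of shapes (ensuring $A'$, $\{x_0\}$, $S$, $B$ are treated as a partition and that $\alpha$, $\sigma$, $\beta$ agree on any overlaps) to be the only place where care is genuinely required.
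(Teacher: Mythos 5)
Your proposal is correct and takes essentially the same route as the paper's cited proof: an induction on $|A|+|B|$ in which a vertex peeled off of $A$ (or $B$) is absorbed into the separating set $S$, so that $\dist(A,B)\geq g$ is preserved while the inductive quantity strictly decreases, with the sub-claim $[\alpha_0\sigma\beta]^\board_\const\neq\emptyset$ supplied by the same induction applied to the smaller pair $(\{x_0\},B)$. The appeal to compactness is superfluous (all of $A$, $B$, $S$ are finite, so each application of the hypothesis directly yields a nonempty cylinder and hence a point of $\homspace$), but it does not affect the validity of the argument.
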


The proof of Lemma \ref{lemSing} (which is a proof by induction on $|A| + |B|$) can be found in \cite{1-briceno} for the case $\board = \Z^d$, but the generalization is straightforward.

The following property was introduced in \cite{1-marcus} in the context of $\Z^d$ shift spaces, and here we proceed to adapt it to the case of homomorphism spaces.

\begin{definition}
\label{ssf}
A homomorphism space $\homspace$ is \emph{single-site fillable (SSF)} if for every site $x \in \bVert$ and $B \subseteq \partial\{x\}$, any graph homomorphism $\beta: \board[B] \to \const$ can be extended to a graph homomorphism $\alpha: \board[B \cup \{x\}] \to \const$ (i.e. $\alpha$ is such that $\left.\alpha\right|_{B} = \beta$).
\end{definition}

\begin{note}
Notice that a homomorphism space $\homspace$ satisfies SSF iff every locally admissible configuration is globally admissible (see \cite{1-marcus}).
\end{note}

\begin{example}
\label{exmpC4}
The homomorphism space $\mathrm{Hom}(\Z^d,\complete_q)$ satisfies SSF iff $q \geq 2d+1$.
\end{example}

\subsection{Relationships among properties}

Sometimes, when a property of $\homspace$ is satisfied for every board $\board$, one can conclude facts about $\const$. A simple example is the following.

\begin{proposition}
\label{safessf}
Let $\const$ be a constraint graph such that $\homspace$ satisfies SSF, for every board $\board$. Then $\const$ has a safe symbol.
\end{proposition}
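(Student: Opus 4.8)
The plan is to exhibit, for the given $\const$, a single well-chosen board $\board$ on which the SSF hypothesis directly produces a safe symbol. Enumerate $\cVert = \{c_1,\dots,c_n\}$ and take $\board$ to be the star graph $\Star_n$, whose vertex set is $\{0,1,\dots,n\}$ with center $0$ adjacent to each leaf $1,\dots,n$. This is a countable, simple, connected, locally finite graph with at least two vertices, hence a legitimate board, and $\partial\{0\} = \{1,\dots,n\}$.

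Next I would apply the SSF property of $\homspace$ at the site $x = 0$ with $B = \partial\{0\}$. The crucial observation is that the induced subgraph $\board[B]$ has no edges at all, since distinct leaves of a star are non-adjacent; consequently every map from $B$ to $\cVert$ is a graph homomorphism from $\board[B]$ to $\const$. In particular the map $\beta$ with $\beta(i) = c_i$ for $1 \le i \le n$ is such a homomorphism, so by SSF it extends to a graph homomorphism $\alpha \colon \board[B \cup \{0\}] \to \const$. Finally, set $s := \alpha(0)$. For each $i \in \{1,\dots,n\}$ the pair $\{0,i\}$ is an edge of $\board[B\cup\{0\}]$, so the homomorphism condition forces $\{s, c_i\} = \{\alpha(0),\alpha(i)\} \in \cEdg$; as $i$ ranges over $1,\dots,n$ and $\{c_1,\dots,c_n\} = \cVert$, this says precisely that $\{s,v\}\in\cEdg$ for every $v\in\cVert$, i.e.\ $s$ is a safe symbol.

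There is no serious obstacle here; the only points requiring a little care are to confirm that $\board = \Star_n$ meets all the standing requirements on a board and that the leaf-coloring $\beta$ is genuinely a homomorphism of $\board[B]$ into $\const$ — which holds precisely because $\board[B]$ is edgeless, so local admissibility is automatic — together with the small check that the clause ``$v = s$'' in the definition of safe symbol (the loop $\{s,s\}\in\cEdg$) is indeed delivered: writing $s = c_j$, the edge $\{0,j\}$ of $\board[B\cup\{0\}]$ yields $\{s,s\} = \{\alpha(0),\alpha(j)\} \in \cEdg$. If preferred, one can equivalently phrase the construction through the reformulation of SSF noted above — that every locally admissible configuration on $\homspace$ is globally admissible — applied to the configuration on $\{1,\dots,n\}$ sending $i \mapsto c_i$, whose global admissibility again furnishes the desired $s$.
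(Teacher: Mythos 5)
Your proof is correct and is essentially identical to the paper's: both take $\board = \Star_{|\const|}$, label the leaves by an enumeration of $\cVert$ (legitimate since the leaves are pairwise non-adjacent), and use SSF at the center to produce a vertex adjacent to all of $\cVert$, including itself. Your extra remarks (that $\Star_n$ is a valid board, that the edgeless induced subgraph makes $\beta$ automatically a homomorphism, and the explicit check of the loop $\{s,s\}$) are just added detail, not a different argument.
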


\begin{proof}
Let $\board = \Star_{|\const|}$ (the $n$-star graph with $n = |\const|$) and let $x \in \bVert$ to be the central vertex with boundary $\partial\{x\} = \left\{y_1,\dots,y_{|\cVert|}\right\}$. Write $\cVert = \{v_1,\cdots,v_{|\cVert|} \}$ and take $\beta \in \cVert^{\partial \{x\}}$ such that $\beta(y_i) = v_i$, for $1 \leq i \leq |\cVert|$. Then, by SSF, there exists a graph homomorphism $\alpha: \board[\partial \{x\} \cup \{x\}] \to \const$ such that $\left.\alpha\right|_{\partial \{x\}} = \beta$. Since $\alpha$ is a graph homomorphism, $x \sim_\board y_i \implies \alpha(x) \sim_\const \alpha(y_i)$. Therefore, $\alpha(x) \sim_\const v_i$, for every $i$, so $\alpha(x)$ is a safe symbol for $\const$.
\end{proof}

Notice that the converse also holds, i.e. if a constraint graph $\const$ has a safe symbol, then $\homspace$ satisfies SSF, for every board $\board$. 

\begin{proposition}
\label{ssftssm}
If $\homspace$ satisfies SSF, then it satisfies TSSM with gap $g = 2$.
\end{proposition}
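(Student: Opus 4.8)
The plan is to exploit the characterization recorded in the Note after Definition~\ref{ssf}: when $\homspace$ satisfies SSF, a configuration is globally admissible if and only if it is locally admissible. So, to verify TSSM with gap $2$, I would take finite sets $A,B,S \Subset \bVert$ with $\dist(A,B)\geq 2$ and configurations $\alpha\in\cVert^A$, $\beta\in\cVert^B$, $\sigma\in\cVert^S$ such that $[\alpha\sigma]^\board_\const\neq\emptyset$ and $[\sigma\beta]^\board_\const\neq\emptyset$, and then show directly that $\alpha\sigma\beta$ is \emph{locally} admissible; SSF upgrades this to global admissibility, i.e.\ $[\alpha\sigma\beta]^\board_\const\neq\emptyset$.

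First I would note that, since $[\alpha\sigma]^\board_\const$ and $[\sigma\beta]^\board_\const$ are nonempty, both $\alpha\sigma$ and $\sigma\beta$ are globally admissible, hence locally admissible; that is, $\alpha\sigma\in\mathrm{Hom}(\board[A\cup S],\const)$ and $\sigma\beta\in\mathrm{Hom}(\board[S\cup B],\const)$. The crux is then the observation that every edge of $\board[A\cup S\cup B]$ is contained either in $A\cup S$ or in $S\cup B$: an edge with one end in $A$ and the other in $B$ would force $\dist(A,B)=1$, contradicting $\dist(A,B)\geq 2$. On any edge inside $A\cup S$ the configuration $\alpha\sigma\beta$ coincides with $\alpha\sigma$, and on any edge inside $S\cup B$ it coincides with $\sigma\beta$; in either case the constraint from $\cEdg$ is respected. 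Hence $\alpha\sigma\beta\in\mathrm{Hom}(\board[A\cup S\cup B],\const)$ is locally admissible, and SSF finishes the argument.

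The main obstacle here is mild: it is just the bookkeeping needed to see that $\alpha\sigma\beta$ is a well-defined configuration on $A\cup S\cup B$ (the three shapes are pairwise disjoint, since $\dist(A,B)\geq 2>0$ and the concatenations $\alpha\sigma$, $\sigma\beta$ are assumed meaningful), together with the case analysis on edges of the induced subgraph. It is worth noting that $g=2$ is sharp for this argument: $\dist(A,B)\geq 2$ is precisely what guarantees the absence of edges between $A$ and $B$, whereas $\dist(A,B)\geq 1$ would only guarantee $A\cap B=\emptyset$. Alternatively, one could first invoke Lemma~\ref{lemSing} to reduce to the case where $A$ and $B$ are singletons, but this is unnecessary, since the argument above applies verbatim to arbitrary finite $A$ and $B$.
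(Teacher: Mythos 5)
Your proof is correct and follows essentially the same route as the paper's: both arguments use the fact that under SSF local admissibility equals global admissibility, observe that $\dist(A,B)\geq 2$ rules out edges between $A$ and $B$ so that every edge of $\board[A\cup S\cup B]$ lies in $A\cup S$ or $S\cup B$, and conclude that $\alpha\sigma\beta$ is locally, hence globally, admissible. Your additional remarks on the sharpness of $g=2$ and on bypassing Lemma~\ref{lemSing} are accurate but not needed.
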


\begin{proof}
Since $\homspace$ satisfies SSF, every locally admissible configuration is globally admissible. If we take $g = 2$, for all disjoint sets $A,S,B \Subset \bVert$ such that $\dist(A,B) \geq g$ and for every $\alpha \in \cVert^A$, $s \in \cVert^S$ and $\beta \in \cVert^B$, if $[\alpha\sigma]^\board_\const, [\sigma\beta]^\board_\const \neq \emptyset$, in particular we have $\alpha\sigma$ and $\sigma\beta$ are locally admissible. Since $\dist(A,B) \geq g = 2$, $A$ and $B$ contain no adjacent vertices, and so $\alpha\sigma\beta$ must be locally admissible, too. Then, by SSF, $\alpha\sigma\beta$ is globally admissible and, therefore, $[\alpha\sigma\beta]^\board_\const \neq \emptyset$.
\end{proof}

Summarizing, given a homomorphism space $\homspace$, we have the following implications:
\begin{align}
\const \mbox{ has a safe symbol}	&	\implies	\homspace \mbox{ satisfies SSF}		\\
							&	\implies	\homspace \mbox{ satisfies TSSM}		\\
							&	\implies	\homspace \mbox{ is strongly irreducible}, \label{TSSM-irred}
\end{align}
and all implications are strict in general (even if we fix $\board$ to be a particular board, for example $\board = \Z^2$). See \cite{1-marcus,1-briceno} for examples that illustrate the differences among some of these conditions.

Dismantlable graphs are closely related with Gibbs measures, as illustrated by the following proposition, parts of which appeared in (\cite{1-brightwell}).

\begin{proposition}[{\cite{1-brightwell}}]
\label{dism-charact}
Let $\const$ be a constraint graph. Then, the following are equivalent:
\begin{enumerate}
\item $\const$ is dismantlable.
\item $\homspace$ is strongly irreducible with gap $2|\const|+1$, for every board $\board$.
\item $\homspace$ is strongly irreducible with some gap, for every board $\board$.
\item For every board $\board$ of bounded degree, there exists a Gibbs $(\board,\const,\Phi)$-spe\-ci\-fi\-ca\-tion that admits a unique n.n. Gibbs measure $\mu$.
\item For every board $\board$ of bounded degree and $\gamma > 0$, there exists a Gibbs $(\board,\const,\Phi)$-specification that satisfies exponential WSM with decay rate $\gamma$.
\end{enumerate}
\end{proposition}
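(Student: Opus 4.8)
The plan is to prove the cycle of implications $(1) \Rightarrow (2) \Rightarrow (3) \Rightarrow (1)$ among the purely combinatorial statements, and then close the loop through the measure-theoretic statements via $(5) \Rightarrow (4) \Rightarrow (3)$ and $(1) \Rightarrow (5)$. The implications $(2) \Rightarrow (3)$ and $(5) \Rightarrow (4)$ are immediate: the former is trivial (a specific gap is a gap), and the latter follows from the remark after Definition~\ref{SSMspec} that exponential WSM gives a unique n.n.\ Gibbs measure. So the real content is in four steps.

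\emph{Step 1: $(1) \Rightarrow (2)$.} This is the structural heart, and it is essentially Brightwell--Winkler. Suppose $\const$ is dismantlable via a sequence of folds $\const = \const_0 \to \const_1 \to \cdots \to \const_N$ with $N < |\const|$, where $\const_{i+1} = \const_i[\cVert_i \setminus \{u_i\}]$ and $v_i$ absorbs $u_i$. Given a board $\board$, disjoint $A, B \Subset \bVert$ with $\dist(A,B) \ge 2|\const|+1$, and $\alpha \in \Leng_A(\homspace)$, $\beta \in \Leng_B(\homspace)$, pick witnessing points $\omega_\alpha, \omega_\beta \in \homspace$. The idea is to build a point agreeing with $\alpha$ on $A$ and $\beta$ on $B$ by ``interpolating'': on the annulus between $A$ and $B$, gradually transform $\omega_\alpha$ into $\omega_\beta$. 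Concretely, I would use the folds to contract the range: define a retraction-type map that, near $A$, keeps $\omega_\alpha$ untouched, and progressively applies folds as one moves outward, so that by distance $|\const|$ from $A$ the configuration has been folded down to a single absorbing vertex, from which one can freely re-expand (running the folds backwards) to match $\omega_\beta$ by distance $|\const|$ from $B$. The gap $2|\const|+1$ is exactly what is needed to fit both the inward fold-collapse from $A$ and the outward fold-collapse from $B$ without interference. The bookkeeping — checking that at each radius the modified configuration remains a graph homomorphism because $\neig(u_i) \subseteq \neig(v_i)$ — is the main technical obstacle, but it is the standard dismantlability argument and I would cite \cite{1-brightwell} for the precise construction.

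\emph{Step 2: $(3) \Rightarrow (1)$, the converse.} Suppose $\const$ is \emph{not} dismantlable. I would produce a specific board $\board$ for which $\homspace$ fails strong irreducibility with every gap. A natural candidate (again following \cite{1-brightwell}) is to take $\board$ built from long paths or a suitable tree, and exploit the fact that a non-dismantlable $\const$ admits a ``stiff'' substructure — e.g.\ a homomorphism $\const \to \const$ with no fold available, or more precisely the characterization that non-dismantlable graphs retract onto a graph with no vertex whose neighbourhood is contained in another's. The point is that for such $\const$ one can find two locally admissible ``boundary conditions'' at far-apart sites that cannot be simultaneously realized no matter how far apart, because every homomorphism on the connecting path is forced to be ``rigid'' (like the checkerboard example in the introduction with $\const = \complete_2$ and $\board = \Z$). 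I would formalize this by showing: non-dismantlable $\Rightarrow$ there is a connected $\board$ and single-site configurations $\alpha$ at $x$, $\beta$ at $y$, extendable individually, but with $[\alpha\beta]^\board_\const = \emptyset$ for $\dist(x,y)$ arbitrarily large.

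\emph{Step 3: $(1) \Rightarrow (5)$.} Given a dismantlable $\const$, a board $\board$ of bounded degree, and $\gamma > 0$, I would design a n.n.\ interaction $\Phi$ as follows. Along the fold sequence there is a distinguished final vertex $s$ (the single remaining vertex); heuristically $s$ behaves like a near-safe symbol. Assign $\Phi(s) = 0$ and $\Phi(v) = -M$ for $v \ne s$, with $M = M(\gamma, \Delta(\board), |\const|)$ large. Then in any finite volume, configurations are overwhelmingly biased toward $s$ wherever $s$ is locally allowed, and because $\const$ is dismantlable one can always ``push'' a configuration toward $s$ over a bounded number of steps (length of the fold sequence). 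Using strong irreducibility with gap $2|\const|+1$ from part (2), one gets a coupling/disagreement-percolation argument: the influence of the boundary condition on a site at distance $n$ is bounded by the probability that a chain of ``non-$s$'' vertices of length $\sim n/(2|\const|)$ survives, which is $\le (C e^{-M})^{n/(2|\const|)}$; choosing $M$ large makes this $\le C' e^{-\gamma n}$. Then Lemma~\ref{ssmSing}'s WSM analogue — or rather a direct single-site-to-general WSM estimate — upgrades this to exponential WSM with rate $\gamma$.

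\emph{Main obstacle.} I expect Step 3 to be the trickiest: making the disagreement-percolation bound rigorous for a general bounded-degree board (not just $\Z^d$), with the fold sequence length entering the geometry, requires care in defining the coupling so that disagreements genuinely propagate only through connected paths and each step of propagation costs a factor $e^{-\Theta(M)}$. Steps 1 and 2 are classical and I would lean on \cite{1-brightwell}; the novelty flagged in the introduction is precisely getting WSM (Step 3) rather than mere uniqueness, so that is where the argument must be most careful.
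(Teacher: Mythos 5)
Your overall architecture matches the paper's: the combinatorial implications are delegated to Brightwell--Winkler, and the genuinely new content is $(1) \implies (5)$, proved by weighting a persistent vertex and running a disagreement-percolation argument via the van den Berg coupling (Theorem \ref{vandenberg}); the paper's Proposition \ref{dism-highRate} does exactly this. There are, however, two concrete problems. First, as written your proposal never establishes any implication \emph{out of} statement $(4)$: your plan announces $(5) \Rightarrow (4) \Rightarrow (3)$, but your ``four steps'' contain only three, and $(4) \Rightarrow (3)$ is nowhere among them. That implication is not immediate --- its contrapositive says that if $\homspace$ fails strong irreducibility on some bounded-degree board, then \emph{every} n.n.\ interaction on that board exhibits a phase transition --- and without it statements $(4)$ and $(5)$ are only shown to be consequences of $(1)$--$(3)$, not equivalent to them. (The paper, like you, leans on \cite{1-brightwell} here, citing the chain $(4) \implies (3) \implies (2) \implies (1)$ from their Theorem 4.1; you need at least that citation.)

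Second, the interaction in your Step 3 has the wrong sign. With the paper's conventions, $\pi^\omega_A(\alpha) \propto e^{-E^\Phi_{A,\omega}(\alpha)}$ and $E^\Phi_{A,\omega}(\alpha)$ contains $\sum_{x}\Phi(\alpha(x))$, so a vertex $v$ with $\Phi(v) = -M$ receives weight $e^{M}$ per occurrence and is \emph{favored}. Your choice $\Phi(s)=0$ and $\Phi(v)=-M$ for $v \neq s$ therefore biases the measure away from $s$, the opposite of your stated intent; you want $\Phi(s)=-M$ and $\Phi \equiv 0$ elsewhere, which is precisely the paper's $\Phi_\lambda$ with $M = \log\lambda$. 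Relatedly, the engine that makes the per-site cost $e^{-M}$ rigorous is the folding lemma (Lemma \ref{persistent}): any point can be modified to take the persistent value $v^*$ on a finite set $B$ while changing nothing outside $\neig_{|\const|-2}(B)$ and without destroying existing occurrences of $v^*$. You gesture at this (``one can always push a configuration toward $s$''), but it is exactly this lemma, applied in Lemma \ref{boundDismant}, that converts ``at least $k$ non-$v^*$ sites on a path of disagreement'' into a bound of order $\lambda^{-k}$; note also that $s$ must be a looped (persistent) vertex for this to work, which requires separating out the trivial dismantlable case. With the sign corrected and the folding lemma made explicit, your Step 3 becomes the paper's proof.
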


The equivalence of $(1)$, $(2)$, $(3)$ and $(4)$ in Proposition \ref{dism-charact} is proven in \cite{1-brightwell}. However, in that work the concept of WSM (a priori, stronger than uniqueness) is not considered. Since WSM implies uniqueness, $(5) \implies (4)$ is trivial. In the remaining part of this section, we introduce the necessary background to prove the missing implications, for which it is sufficient to show that $(1) \implies (5)$ (see Proposition \ref{dism-highRate}). This and a subsequent proof (see Proposition \ref{UMC-highRate}) in this paper will have a similar structure to the proof that $(1) \implies (4)$ from \cite[Theorem 7.2]{1-brightwell}. However, some coupling techniques will need to be modified, plus other combinatorial ideas need to be considered.

Given a dismantlable graph $\const$, the only case where a sequence of folds reduces $\const$ to a vertex without a loop is when $\const$ is a set of isolated vertices without loops. In this case, we call $\const$ trivial (see \cite[p. 6]{1-brightwell}). If $v \in \cVert$ has a loop and there is a sequence of folds reducing $\const$ to $v$, then we call $v$ a \emph{persistent} vertex of $\const$ as in \cite{1-brightwell}.

\begin{lemma}[{\cite[Lemma 5.2]{1-brightwell}}]
\label{persistent}
Let $\const$ be a nontrivial dismantlable graph and  $v^*$ a persistent vertex of $\const$. Let $\board$ be a board, $A \Subset \bVert$, and $\omega \in \homspace$. Then there exists $\upsilon \in \homspace$ such that:
\begin{enumerate}
\item $\upsilon(x) = \omega(x)$, for every $x \in \bVert \setminus \neig_{|\const|-2}(A)$,
\item $\upsilon(x) = v^*$, for every $x \in A$, and
\item $\omega^{-1}(v^*) \subseteq \upsilon^{-1}(v^*)$.
\end{enumerate}
\end{lemma}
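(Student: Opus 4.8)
The plan is to exploit the sequence of folds witnessing dismantlability of $\const$ down to the persistent vertex $v^*$, and to apply these folds ``locally'' on a growing neighbourhood of $A$, one shell at a time, so as to push every symbol in an increasingly large region towards $v^*$ while only disturbing $\omega$ on a bounded-width collar around $A$. Write the fold sequence as $\const = \const_0 \to \const_1 \to \cdots \to \const_k$, where $\const_k = \const[\{v^*\}]$ and $k \le |\const| - 1$; each step $\const_{i} \to \const_{i+1}$ removes one vertex $u_i$ via a fold $\alpha_i$ collapsing $u_i$ onto some $w_i$ with $\neig_{\const_i}(u_i) \subseteq \neig_{\const_i}(w_i)$. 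Composing, we get a homomorphism $\phi = \alpha_{k-1} \circ \cdots \circ \alpha_0 : \const \to \const$ with image $\{v^*\}$, hence $\phi \equiv v^*$; the point is that each partial composite $\phi_j = \alpha_{j-1}\circ\cdots\circ\alpha_0$ maps $\const$ onto $\const_j$ and fixes $\cVert(\const_j)$ pointwise. Note $\phi_j$ fixes $v^*$ for every $j$, which will be what gives property (3).

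The construction is iterative over $j = 0, 1, \dots, k-1$. Start with $\upsilon_0 = \omega$. Having constructed $\upsilon_j \in \homspace$, which I will arrange to take values in $\cVert(\const_j)$ on the set $\neig_{j}(A)$ and to agree with $\omega$ off $\neig_{j}(A)$, define $\upsilon_{j+1}$ by applying the fold $\alpha_j$ to the values of $\upsilon_j$ on $\neig_{j}(A)$ and leaving $\upsilon_{j+1} = \upsilon_j$ elsewhere. Concretely, $\upsilon_{j+1}(x) = \alpha_j(\upsilon_j(x))$ for $x \in \neig_j(A)$ and $\upsilon_{j+1}(x) = \upsilon_j(x)$ for $x \notin \neig_j(A)$. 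One must check $\upsilon_{j+1} \in \homspace$: an edge $\{x,y\}$ of $\board$ internal to $\neig_j(A)$ is preserved because $\alpha_j$ is a graph homomorphism; an edge with $x \notin \neig_j(A)$ and $y \in \neig_j(A)$ requires $y \in \partial(\neig_{j-1}(A))$ roughly, so $\upsilon_j(y) \in \cVert(\const_j)$ and, since $\upsilon_{j+1}(x) = \upsilon_j(x)$ with $\upsilon_j(x) \sim_\const \upsilon_j(y)$, one needs $\upsilon_j(x) \sim_\const \alpha_j(\upsilon_j(y))$; this holds because $\alpha_j$ only moves $u_j$, and if $\upsilon_j(y) = u_j$ then $\upsilon_j(x) \in \neig_\const(u_j) \subseteq \neig_\const(w_j)$, so $\upsilon_j(x) \sim_\const w_j = \alpha_j(u_j)$. (If $\upsilon_j(y) \ne u_j$, then $\alpha_j$ fixes it and there is nothing to do.) After $k$ steps, $\upsilon := \upsilon_k$ satisfies: $\upsilon(x) = v^*$ for $x \in A$ (since on $A$ we have applied the full composite $\phi_k \equiv v^*$ — wait, one must be careful that $A \subseteq \neig_j(A)$ for every $j$, which is immediate as $\neig_0(A) = A$); $\upsilon(x) = \omega(x)$ for $x \notin \neig_{k-1}(A) \supseteq \neig_{|\const|-2}(A)$; and $\omega^{-1}(v^*) \subseteq \upsilon^{-1}(v^*)$ because each $\alpha_j$ fixes $v^*$, so no site already labelled $v^*$ ever changes. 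The only subtlety in matching the index in statement (1) is that $\neig_{k-1}(A)$ is contained in $\neig_{|\const|-2}(A)$ since $k - 1 \le |\const| - 2$.

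The main obstacle I anticipate is bookkeeping the interface condition in the homomorphism check: one must track precisely, at stage $j$, which sites have already been relabelled (those in $\neig_j(A)$) versus which still carry their original $\omega$-value, and verify that the partially-folded configuration remains a valid homomorphism across the moving boundary. This is exactly where the nesting $\neig_j(A) \subseteq \neig_{j+1}(A)$ and the containment $\neig_{\const_j}(u_j) \subseteq \neig_{\const_j}(w_j)$ must be combined, and where one has to be slightly careful that the relevant neighbour inclusions are taken inside $\const_j$ rather than $\const$ (they transfer because $\const_j$ is an induced subgraph, and a site whose $\upsilon_j$-value lies in $\cVert(\const_j)$ has all its constraints already expressible in $\const_j$). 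Everything else — the terminal value $v^*$ on $A$, the untouched exterior, and the monotonicity $\omega^{-1}(v^*)\subseteq \upsilon^{-1}(v^*)$ — follows formally from the properties of the fold maps, in particular from the fact that $v^*$ is a fixed point of every $\alpha_j$. This is, as the authors note, essentially the argument of \cite[Lemma 5.2]{1-brightwell}.
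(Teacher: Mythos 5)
The paper does not actually prove this lemma itself --- it is quoted from \cite[Lemma 5.2]{1-brightwell} --- so your proposal stands or falls on its own. Your overall strategy (iterate the fold sequence over a nested family of neighbourhoods of $A$, use the inclusion $\neig_{\const_j}(u_j)\subseteq\neig_{\const_j}(w_j)$ to handle the interface, and obtain (3) from the fact that every fold fixes $v^*$) is the right one, but your nesting runs in the wrong direction, and this is a genuine gap rather than bookkeeping. You apply $\alpha_j$ on the \emph{growing} region $\neig_j(A)$. A site $y\in\neig_j(A)\setminus\neig_{j-1}(A)$ has not been touched by any earlier stage (those acted only on $\neig_0(A),\dots,\neig_{j-1}(A)$), so $\upsilon_j(y)=\omega(y)$, which need not lie in $\cVert(\const_j)$. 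Hence your inductive invariant ``$\upsilon_j$ takes values in $\cVert(\const_j)$ on $\neig_j(A)$'' fails for $j\ge 1$, the expression $\alpha_j(\upsilon_j(y))$ is not even defined on that annulus, and the interface check collapses: for $x\notin\neig_j(A)$ adjacent to $y$ with $\upsilon_j(y)=u_j$, you need $\omega(x)\sim w_j$, but $\omega(x)$ may be a previously deleted vertex $u_i$ with $i<j$, about which the inclusion $\neig_{\const_j}(u_j)\subseteq\neig_{\const_j}(w_j)$ (which lives inside $\const_j$) says nothing. Concretely, let $\const$ have vertices $\{u_0,u_1,v^*\}$ and edges $\{u_0,u_1\},\{u_1,v^*\},\{v^*,v^*\}$, folded as $u_0\mapsto v^*$ then $u_1\mapsto v^*$. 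If $y\in A$ and $x\in\neig_1(A)\setminus A$ are adjacent with $\omega(y)=u_1$ and $\omega(x)=u_0$, then your stage-$1$ step sends $\upsilon(y)$ to $v^*$ while leaving (or, under any extension of $\alpha_1$, fixing) $\upsilon(x)=u_0$, and $u_0\not\sim v^*$, so the result is not in $\homspace$.

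The repair is to reverse the nesting, as in Brightwell--Winkler: with $k=|\const|-1$ folds, apply $\alpha_0$ on $\neig_{k-1}(A)=\neig_{|\const|-2}(A)$, then $\alpha_1$ on $\neig_{k-2}(A)$, and so on, finishing with $\alpha_{k-1}$ on $\neig_0(A)=A$. Now every exterior neighbour of the stage-$j$ region $\neig_{k-1-j}(A)$ lies in the stage-$(j-1)$ region $\neig_{k-j}(A)$ and therefore already carries a value in $\cVert(\const_j)$; the fold is defined wherever it is applied, the interface inclusion applies verbatim inside $\const_j$, the set $A$ receives the full composite $\alpha_{k-1}\circ\cdots\circ\alpha_0\equiv v^*$, nothing outside $\neig_{|\const|-2}(A)$ is altered, and (3) follows because each $\alpha_j$ fixes $v^*$. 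All of the ingredients you assembled survive this reversal unchanged.
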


Now, given a constraint graph $\const$, a persistent vertex $v^*$ and $\lambda > 1$, define $\Phi_\lambda$ to be the n.n. interaction given by
\begin{equation}
\begin{array}{ccc}
\Phi_\lambda(v^*)  = -\log\lambda,	&	\mbox{ and }	&	\left.\Phi_\lambda\right|_{\cVert \setminus \{v^*\} \cup \cEdg} \equiv 0.
\end{array}
\end{equation}

We have the following lemma.

\begin{lemma}
\label{boundDismant}
Let $\const$ be dismantlable and let $\board$ be a board of bounded degree $\Delta$. Given $\lambda > 1$, consider the Gibbs $(\board,\const,\Phi_\lambda)$-spe\-ci\-fi\-ca\-tion $\pi$, a point $\omega \in \homspace$, and sets $B \subseteq A \Subset \bVert$ such that $\neig_{|\const|-2}(B) \subseteq A$. Then, for any $k \in \N$,
\begin{equation}
\pi^\omega_A\left(\left\{\alpha: \left|\left\{ y \in B: \alpha(y) \neq v^*\right\}\right| \geq k\right\}\right) \leq |\const|^{|B|\Delta^{|\const|-1}} \lambda^{-k}.
\end{equation}
\end{lemma}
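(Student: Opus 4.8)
The plan is to bound the probability that many sites in $B$ carry a non-persistent symbol by comparing, for each such configuration $\alpha$, its $\pi^\omega_A$-weight against the weight of a ``corrected'' configuration obtained by pushing a large block onto $v^*$ via Lemma \ref{persistent}. More precisely, fix $k \in \N$ and let $\alpha$ be a globally admissible configuration on $A$ (compatible with $\omega$ on $A^c$) with $\left|\{y \in B: \alpha(y) \neq v^*\}\right| \geq k$; call this set $Y_\alpha$. Applying Lemma \ref{persistent} to the point $\alpha\left.\omega\right|_{A^c} \in \homspace$ with the finite set $Y_\alpha$, we obtain $\upsilon \in \homspace$ that equals $v^*$ on $Y_\alpha$, agrees with $\alpha\left.\omega\right|_{A^c}$ outside $\neig_{|\const|-2}(Y_\alpha) \subseteq \neig_{|\const|-2}(B) \subseteq A$, and satisfies $\left(\alpha\left.\omega\right|_{A^c}\right)^{-1}(v^*) \subseteq \upsilon^{-1}(v^*)$. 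In particular $\left.\upsilon\right|_{A^c} = \left.\omega\right|_{A^c}$, so $\alpha' := \left.\upsilon\right|_A$ is in the domain of $\pi^\omega_A$, and $\alpha'$ has strictly more $v^*$-sites inside $A$ than $\alpha$ — indeed at least $k$ more, since all of $Y_\alpha$ has been switched to $v^*$ while no $v^*$-site is lost.

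The key weight estimate comes from the explicit form of $\Phi_\lambda$: since $\Phi_\lambda$ vanishes on edges and off $v^*$, the energy $E^{\Phi_\lambda}_{A,\omega}(\alpha)$ equals $-\log\lambda$ times the number of sites in $A$ (plus a boundary contribution from $\partial A \cap (v^*)^{-1}$, which is the same for $\alpha$ and $\alpha'$ because $\left.\omega\right|_{A^c}$ is unchanged) mapped to $v^*$. Hence
\begin{equation}
\frac{\pi^\omega_A(\alpha)}{\pi^\omega_A(\alpha')} = e^{-E^{\Phi_\lambda}_{A,\omega}(\alpha) + E^{\Phi_\lambda}_{A,\omega}(\alpha')} = \lambda^{\left|\{y \in A: \alpha(y) = v^*\}\right| - \left|\{y \in A: \alpha'(y) = v^*\}\right|} \leq \lambda^{-k}.
\end{equation}
Thus $\pi^\omega_A(\alpha) \leq \lambda^{-k}\,\pi^\omega_A(\alpha')$ for every such $\alpha$. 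Summing over all $\alpha$ in the bad event and grouping by the image $\alpha'$ gives $\pi^\omega_A(\text{bad event}) \leq \lambda^{-k} \sum_{\alpha'} N(\alpha')\,\pi^\omega_A(\alpha')$, where $N(\alpha') = \left|\{\alpha : \text{$\alpha$ maps to $\alpha'$}\}\right|$, so it remains to bound $N(\alpha')$ uniformly.

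The main obstacle, and the one requiring care, is bounding the multiplicity $N(\alpha')$: we must control how many configurations $\alpha$ in the bad event can be sent to the same $\alpha'$. The point is that $\alpha$ and $\alpha'$ differ only on $\neig_{|\const|-2}(Y_\alpha) \subseteq \neig_{|\const|-2}(B)$, and since $\board$ has degree at most $\Delta$ this neighbourhood has at most $|B|\,(\Delta^{|\const|-1})$-ish sites — more precisely $\left|\neig_{|\const|-2}(B)\right| \leq |B| \cdot \left(1 + \Delta + \cdots + \Delta^{|\const|-2}\right) \leq |B|\Delta^{|\const|-1}$ for $\Delta \geq 2$ — and on each such site $\alpha$ takes one of at most $|\const|$ values. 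Therefore at most $|\const|^{|B|\Delta^{|\const|-1}}$ configurations $\alpha$ can differ from a fixed $\alpha'$ only within $\neig_{|\const|-2}(B)$, giving $N(\alpha') \leq |\const|^{|B|\Delta^{|\const|-1}}$. I would be slightly careful about the exact combinatorial constant and whether the $\neig_{|\const|-2}$ radius (vs.\ $\neig_{|\const|-1}$) and the $\Delta^{|\const|-1}$ exponent match exactly — a crude bound $\left|\neig_{|\const|-2}(B)\right| \leq |B|\Delta^{|\const|-1}$ is what is stated and it holds comfortably — but modulo this bookkeeping the estimate is immediate. Plugging in, $\pi^\omega_A(\text{bad event}) \leq \lambda^{-k} \cdot |\const|^{|B|\Delta^{|\const|-1}} \sum_{\alpha'} \pi^\omega_A(\alpha') \leq |\const|^{|B|\Delta^{|\const|-1}}\lambda^{-k}$, since the $\pi^\omega_A(\alpha')$ sum over distinct $\alpha'$ in the range is at most $1$, which is the claimed bound.
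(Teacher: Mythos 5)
Your proof is correct and follows essentially the same route as the paper's: both apply Lemma \ref{persistent} to trade each non-$v^*$ site of $B$ for a factor of $\lambda$ in the Gibbs weight, and both pay the same entropy factor $|\const|^{|B|\Delta^{|\const|-1}}$ for the possible configurations on $\neig_{|\const|-2}(B)$. The only difference is organizational: the paper bounds the conditional probability $\pi^\omega_A(\alpha|_{C} \mid \alpha|_{A \setminus C})$ and then union-bounds over $\Leng_{C}(\homspace)$, whereas you compare full weights $\pi^\omega_A(\alpha)/\pi^\omega_A(\alpha')$ and count preimages of the correction map $\alpha \mapsto \alpha'$ --- the same computation in a different order.
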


\begin{proof}
Let's denote $C = \neig_{|\const|-2}(B)$. W.l.o.g., consider an arbitrary configuration $\alpha \in \cVert^A$ such that $\alpha\left.\omega\right|_{A^c} \in \homspace$ (and, in particular, such that $\pi^\omega_A(\alpha) > 0$). Denote $\omega_1 = \alpha\left.\omega\right|_{A^c}$. By Lemma \ref{persistent}, there exists $\omega_2 \in \homspace$ such that $\left.\omega_1\right|_{\bVert \setminus C} = \left.\omega_2\right|_{\bVert \setminus C}$, $\omega_2(x) = v^*$ for every $x \in B$, and $\omega_1^{-1}(v^*) \subseteq \omega_2^{-1}(v^*)$. 

Notice that $\left.\omega_1\right|_{A}\left.\omega\right|_{A^c}, \left.\omega_2\right|_{A}\left.\omega\right|_{A^c} \in \homspace$ and $\left.\omega_1\right|_{A \setminus C} = \left.\omega_2\right|_{A \setminus C}$. Now, given some $k \leq |B|$, suppose that $\alpha$ is such that $\left|\left\{ y \in B: \alpha(y) \neq  v^*\right\}\right| \geq k$. Then, by the definition of Gibbs specification and the fact that $\omega_1^{-1}(v^*) \subseteq \omega_2^{-1}(v^*)$,
\begin{equation}
\frac{\pi^\omega_A\left(\left.\omega_{2}\right|_{C} \middle\vert \left.\omega_1\right|_{A \setminus C}\right)}{\pi^\omega_A\left(\left.\omega_1\right|_{C} \middle\vert \left.\omega_1\right|_{A \setminus C}\right)} = \frac{\pi^{\omega_{1}}_C\left(\left.\omega_{2}\right|_{C}\right)}{\pi^{\omega_{1}}_C\left(\left.\omega_1\right|_{C}\right)} \geq \lambda^{k}.
\end{equation}

Therefore,
\begin{equation}
\pi^\omega_A\left(\left.\alpha\right|_{C} \middle\vert \left.\alpha\right|_{A \setminus C}\right) = \pi^\omega_A\left(\left.\omega_1\right|_{C} \middle\vert \left.\omega_1\right|_{A \setminus C}\right) \leq \lambda^{-k}.
\end{equation}

Next, by taking averages over all configurations $\beta \in \Leng_{A \setminus C}(\homspace)$ such that
\begin{equation}
\beta\left.\alpha\right|_C\left.\omega\right|_{A^c} \in \homspace,
\end{equation}
we have
\begin{equation}
\pi^\omega_A(\left.\alpha\right|_{C}) = \sum_{\beta} \pi^\omega_A\left(\left.\alpha\right|_{C} \middle\vert \beta\right) \pi^\omega_A\left(\beta\right) \leq \sum_{\beta} \lambda^{-k} \pi^\omega_A\left(\beta\right) = \lambda^{-k}.
\end{equation}

Notice that $|C| = \left|\neig_{|\const|-2}(B)\right| \leq |B|\Delta^{|\const|-1}$. In particular,
\begin{equation}
\left|\Leng_{C}\left(\homspace\right)\right| \leq |\const|^{|B|\Delta^{|\const|-1}}.
\end{equation}

Then, since $\alpha$ was arbitrary,
\begin{align}
\pi^\omega_A\left(\left\{\alpha: \left|\left\{ y \in B: \alpha(y) \neq v^*\right\}\right| \geq k\right\}\right)	&	\leq \sum_{\substack{\left.\alpha\right|_C: \alpha \in \Leng_A(\homspace),\\\left|\left\{ y \in B: \alpha(y) \neq v^*\right\}\right| \geq k}}\pi^\omega_A(\left.\alpha\right|_C)	\\
																		&	\leq |\const|^{|B|\Delta^{|\const|-1}}\lambda^{-k}.
\end{align}
\end{proof}

As mentioned before, we essentially use some coupling techniques from \cite{1-brightwell}, with slight modifications.
We will use the following theorem.

\begin{theorem}[{\cite[Theorem 1]{1-berg}}]
\label{vandenberg}
Given $\pi$ a Gibbs $(\board,\const,\Phi)$-specification, $A \Subset \bVert$ and $\omega_1,\omega_2 \in \homspace$, there exists a coupling $\left((\alpha_1(x),\alpha_2(x)), x \in A\right)$ of $\pi^{\omega_1}_A$ and $\pi^{\omega_2}_A$ (whose distribution we denote by $\mathbb{P}^{\omega_1,\omega_2}_A$), such that for each $x \in A$, $\alpha_1(x) \neq \alpha_2(x)$ if and only if there is a path of disagreement (i.e. a path $\Path$ such that $\alpha_1(y) \neq \alpha_2(y)$, for all $y \in \Path$) from $x$ to $\Sigma_{\partial A}(\omega_1,\omega_2)$, $\mathbb{P}^{\omega_1,\omega_2}_A$-almost surely.
\end{theorem}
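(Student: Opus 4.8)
The plan is to build the coupling by a single exploration of the \emph{disagreement cluster} growing out of the seed $\Sigma_{\partial A}(\omega_1,\omega_2)$, revealing one site at a time with maximal couplings, and then to fill in the remaining sites using shared randomness. The ``if'' direction is immediate: if there is a path of disagreement from $x$ to $\Sigma_{\partial A}(\omega_1,\omega_2)$, then by definition $x$ lies on a path all of whose vertices disagree, so $\alpha_1(x) \neq \alpha_2(x)$. The content is the ``only if'' direction, namely that every disagreeing site is joined to $\Sigma_{\partial A}(\omega_1,\omega_2)$ by a disagreement path, and this is exactly what the exploration is engineered to force.

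First I would run the exploration. Maintain a set $\mathcal D$ of sites already found to disagree, initialized with $\Sigma_{\partial A}(\omega_1,\omega_2) \subseteq \partial A$ (using the fixed values of $\omega_1,\omega_2$ there), together with a record of the revealed agreeing sites. At each step pick an unrevealed site $x \in A$ adjacent to $\mathcal D$ and sample the pair $(\alpha_1(x),\alpha_2(x))$ from a \emph{maximal coupling} of the two conditional laws $\pi^{\omega_i}_A(\alpha(x) \in \cdot \mid \text{already-revealed values of } \alpha_i)$, $i=1,2$, conditionally on the past. If the sampled values differ, add $x$ to $\mathcal D$ and expose its neighbours as new candidates; if they agree, record the common value and do not expand from $x$. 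Since at each step we sample from the genuine conditional law given the revealed configuration, the chain rule guarantees that the marginal on the explored set is correct for each $i$; and since a site enters $\mathcal D$ only when adjacent to $\mathcal D$, the set $\mathcal D$ is by construction connected to $\Sigma_{\partial A}(\omega_1,\omega_2)$ through disagreeing sites.

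When the exploration halts, let $E$ be the explored set and $B = A \setminus E$. Every site of $B$ is non-adjacent to $\mathcal D$ (otherwise it would still be a candidate), so the revealed \emph{agreeing} part of $E$ separates $B$ from $\Sigma_{\partial A}(\omega_1,\omega_2)$: any path in $\overline A$ from $B$ to a seed site must first cross a revealed agreeing site, and $B$ touches $\partial A$ only off $\Sigma_{\partial A}(\omega_1,\omega_2)$, where $\omega_1 = \omega_2$. I would then invoke the Markov (DLR) property of $\pi^{\omega_i}_A$: conditioned on the revealed configuration on $E$, the law of $\alpha|_B$ under $\pi^{\omega_i}_A$ is a Gibbs measure on $B$ whose entire boundary data (the agreeing interface $\partial B \cap E \subseteq \mathcal A$ and the agreeing portion $\partial B \cap \partial A$) is identical for $i=1$ and $i=2$. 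Hence these two conditional laws coincide, and sampling $\alpha_1|_B = \alpha_2|_B$ from the common conditional law via shared randomness both preserves the marginals and forces every site of $B$ to agree. Thus the disagreeing sites are precisely those in $\mathcal D$, each joined to $\Sigma_{\partial A}(\omega_1,\omega_2)$ by a disagreement path.

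The main obstacle is the screening step: one must justify applying the Markov property along the \emph{random} interface produced by the exploration, i.e. treat $E$ as a stopping set and prove a strong-Markov-type statement, so that conditioning on the explored region and its revealed values yields the identical conditional laws on $B$ claimed above. Making the separation rigorous (that the agreeing part of $E$ genuinely disconnects $B$ from $\Sigma_{\partial A}(\omega_1,\omega_2)$ in $\overline A$) and verifying that the conditional Gibbs law on $B$ depends only on the agreeing interface are the delicate points; everything else (maximal couplings giving agreement whenever the conditionals coincide, the chain rule for the explored marginals, and termination of the exploration since $A$ is finite) is routine. One should also dispatch the degenerate case $\Sigma_{\partial A}(\omega_1,\omega_2) = \emptyset$ separately: there $\omega_1,\omega_2$ agree on $\partial A$, the exploration is empty, and $\mathbb P^{\omega_1,\omega_2}_A$ is the diagonal coupling with no disagreement anywhere.
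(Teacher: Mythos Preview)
The paper does not supply its own proof of this statement: Theorem~\ref{vandenberg} is quoted directly from \cite[Theorem~1]{1-berg} (with only the remark that the original is stated for MRFs rather than specifications), so there is no in-paper argument to compare against. That said, your sketch is precisely the van den Berg disagreement-percolation coupling that underlies the cited result: grow the disagreement cluster outward from the seed $\Sigma_{\partial A}(\omega_1,\omega_2)$ using site-by-site maximal couplings of the one-point conditionals, then invoke the domain Markov property to couple identically on the unexplored region, whose boundary data coincide for the two specifications. The structure and the identification of the delicate point (a strong-Markov/stopping-set justification for conditioning on the random explored region) are both correct, and the degenerate case $\Sigma_{\partial A}(\omega_1,\omega_2)=\emptyset$ is handled as you say. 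In short, your proposal is the standard proof and matches the source the paper cites.
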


\begin{remark}
The result in \cite[Theorem 1]{1-berg} is for MRFs, but here we state it for specifications.
\end{remark}

\begin{proposition}
\label{dism-highRate}
Let $\board$ be a board of bounded degree $\Delta$ and $\const$ be a dismantlable graph. Then, for all $\gamma > 0$, there exists $\lambda_0 = \lambda_0(\gamma,|\const|,\Delta)$ such that for every $\lambda > \lambda_0$, the Gibbs $(\board,\const,\Phi_\lambda)$-specification $\pi$ satisfies exponential WSM with decay rate $\gamma$.
\end{proposition}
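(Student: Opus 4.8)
The plan is to combine the disagreement-percolation coupling of Theorem \ref{vandenberg} with the tail estimate of Lemma \ref{boundDismant}, choosing the weight $\lambda$ attached to the persistent vertex $v^*$ so large that it overwhelms the exponential growth of paths in a board of maximum degree $\Delta$. We may assume $\const$ is nontrivial with $|\const| \ge 2$ (the cases where $\const$ is trivial or a single looped vertex are immediate: in the first $\homspace = \emptyset$ for every board, in the second $|\homspace| = 1$) and fix a persistent vertex $v^*$, so that $\Phi_\lambda$ is defined. Fix $A \Subset \bVert$, $B \subseteq A$, $\beta \in \cVert^B$, $\omega_1, \omega_2 \in \homspace$, and put $n := \dist(B, \partial A)$. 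By Theorem \ref{vandenberg} there is a coupling $\mathbb{P}^{\omega_1,\omega_2}_A$ of $\pi^{\omega_1}_A$ and $\pi^{\omega_2}_A$ under which $\alpha_1(x) \ne \alpha_2(x)$ forces a path of disagreement from $x$ to $\Sigma_{\partial A}(\omega_1,\omega_2) \subseteq \partial A$; marginalizing onto $B$ and a union bound then give
\[
\bigl| \pi^{\omega_1}_A(\beta) - \pi^{\omega_2}_A(\beta) \bigr| \;\le\; \sum_{x \in B} \mathbb{P}^{\omega_1,\omega_2}_A\bigl( \exists \text{ a disagreement path from } x \text{ to } \partial A \bigr),
\]
so it suffices to bound each summand by $C e^{-\gamma n}$ for a constant $C = C(\gamma, |\const|, \Delta)$, provided $\lambda$ exceeds a threshold $\lambda_0 = \lambda_0(\gamma, |\const|, \Delta)$.

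Fix $x \in B$ and set $m := n - |\const| + 1$; when $n < |\const| - 1$ one bounds the probability by $1$ and absorbs this into $C$. Loop-erasing a disagreement path from $x$ to $\partial A$ and truncating it after $m$ steps yields a self-avoiding disagreement path $\Path'$ of length exactly $m$ starting at $x$: its $i$-th vertex lies at distance $\le i \le m < n \le \dist(x, \partial A)$ from $x$, hence inside $A$. Let $W$, $|W| = m+1$, be the vertex set of $\Path'$; by the same distance estimate and a triangle-inequality argument, $\neig_{|\const|-2}(W) \subseteq A$. There are at most $\Delta^m$ self-avoiding paths of length $m$ from $x$, so it remains to bound $\mathbb{P}^{\omega_1,\omega_2}_A\bigl( \alpha_1(y) \ne \alpha_2(y) \text{ for all } y \in W \bigr)$ for one such $\Path'$. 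Here is the key point: at each $y \in W$ at most one of $\alpha_1(y), \alpha_2(y)$ can equal $v^*$, so $|\{ y \in W : \alpha_1(y) \ne v^* \}| + |\{ y \in W : \alpha_2(y) \ne v^* \}| \ge |W|$, whence one of the two counts is $\ge (m+1)/2$. Since the marginals of the coupling are $\pi^{\omega_1}_A$ and $\pi^{\omega_2}_A$ and $\neig_{|\const|-2}(W) \subseteq A$, Lemma \ref{boundDismant} (applied with $B = W$ and $k = \lceil (m+1)/2 \rceil$) bounds the probability of each count exceeding $(m+1)/2$ by $|\const|^{(m+1)\Delta^{|\const|-1}} \lambda^{-(m+1)/2}$.

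Assembling the estimates, each $x$-summand is at most
\[
2\, \Delta^m\, |\const|^{(m+1)\Delta^{|\const|-1}}\, \lambda^{-(m+1)/2} \;=\; 2\, |\const|^{\Delta^{|\const|-1}}\, \lambda^{-1/2}\, \bigl( D \lambda^{-1/2} \bigr)^m,
\]
where $D := \Delta\, |\const|^{\Delta^{|\const|-1}}$ depends only on $\Delta$ and $|\const|$. Given $\gamma > 0$, put $\lambda_0 := D^2 e^{2\gamma}$; for $\lambda > \lambda_0$ one has $D \lambda^{-1/2} < e^{-\gamma}$, and since $m = n - |\const| + 1$ this yields a bound of the required form $C e^{-\gamma n}$, after enlarging $C = C(\gamma, |\const|, \Delta)$ so that it also dominates the trivial range $n < |\const| - 1$. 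Summing over $x \in B$ gives $\bigl| \pi^{\omega_1}_A(\beta) - \pi^{\omega_2}_A(\beta) \bigr| \le |B|\, C e^{-\gamma n}$, i.e.\ exponential WSM with rate $f(n) = C e^{-\gamma n}$ and decay rate $\gamma$.

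The step I expect to be the main obstacle is the one in the middle paragraph: from an arbitrary disagreement path one must extract a self-avoiding initial segment that is simultaneously long (length $\asymp n$) and far enough from $\partial A$ that Lemma \ref{boundDismant} applies to its entire vertex set, and then one must pass from the joint event ``disagreement all along the path'' to a marginal statement about a single $\pi^{\omega_i}_A$. Lemma \ref{boundDismant} only controls individual specifications, not the coupling, which is exactly why the ``at most one of $\alpha_1(y),\alpha_2(y)$ equals $v^*$'' observation is needed to convert a statement about disagreements into one about the rarity of non-$v^*$ symbols.
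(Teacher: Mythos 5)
Your proposal is correct and follows essentially the same route as the paper's proof: the van den Berg--Maes disagreement-percolation coupling (Theorem \ref{vandenberg}), a union bound over $x \in B$, the observation that along a disagreement path at most one of $\alpha_1(y),\alpha_2(y)$ equals $v^*$, and Lemma \ref{boundDismant} applied to each marginal. The only cosmetic difference is that you truncate to a single self-avoiding path of fixed length $m \approx n - |\const|$ and count $\Delta^m$ such paths, whereas the paper sums over all path lengths $k \geq n-|\const|+2$ with the count $\Delta(\Delta-1)^k$; both yield the same threshold $\lambda_0(\gamma,|\const|,\Delta)$ up to constants.
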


\begin{proof}
Let $A \Subset \bVert$, $B \subseteq A$, $\beta \in \cVert^{B}$ and $\omega_1,\omega_2 \in \homspace$. W.l.o.g. (since $C$ can be taken arbitrarily large in the desired decay function $Ce^{-\gamma n}$), we may suppose that
\begin{equation}
\dist\left(B,\partial A\right) = n > |\const|-2.
\end{equation}

By Theorem \ref{vandenberg}, we have
\begin{align}
\left| \pi_A^{\omega_1}(\beta) - \pi_A^{\omega_2}(\beta) \right|	&	=	\left|\mathbb{P}^{\omega_1,\omega_2}_A\left(\left.\alpha_1\right|_{B} = \beta\right) - \mathbb{P}^{\omega_1,\omega_2}_A\left(\left.\alpha_2\right|_{B} = \beta\right)\right|	\\
												&	\leq	\mathbb{P}^{\omega_1,\omega_2}_A(\left.\alpha_1\right|_{B} \neq \left.\alpha_2\right|_{B})	\\
												&	\leq	\sum_{x \in B}\mathbb{P}^{\omega_1,\omega_2}_A(\alpha_1(x) \neq \alpha_2(x))			\\
												&	=	\sum_{x \in B}\mathbb{P}^{\omega_1,\omega_2}_A\left(\exists \mbox{ path of disagr. from } x \mbox{ to } \Sigma_{\partial A}(\omega_1,\omega_2)\right)	\\
												&	\leq	\sum_{x \in B}\mathbb{P}^{\omega_1,\omega_2}_A\left(\exists \mbox{ path of disagr. from } x \mbox{ to } \partial A\right)\\
												&	\leq	\sum_{x \in B}\mathbb{P}^{\omega_1,\omega_2}_A\left(\exists \mbox{ path of disagr. from } x \mbox{ to } \neig_{|\const|-2}(\partial A)\right).
\end{align}

When considering a path of disagreement $\Path$ from $x$ to $\neig_{|\const|-2}(\partial A)$, we can assume that $\neig_{|\const|-2}(\Path) \subseteq A$. Then, by Lemma \ref{boundDismant},
\begin{equation}
\pi^\omega_A\left(\left\{\alpha: \left|\left\{ y \in \Path: \alpha(y) \neq v^*\right\}\right| \geq k\right\}\right) \leq |\const|^{|\Path|\Delta^{|\const|-1}} \lambda^{-k}.
\end{equation}

In addition, we have $|\Path| \geq n-|\const|+2$ and, for every $y \in \Path$, we have $\alpha_1(y) \neq \alpha_2(y)$, so $\alpha_1(y)$ and $\alpha_2(y)$ cannot be both $v^*$ at the same time and either $\left.\alpha_1\right|_\Path$ or $\left.\alpha_1\right|_\Path$ must have $|\Path|/2$ sites different from $v^*$. In consequence,
\begin{align}
	&	\mathbb{P}^{\omega_1,\omega_2}_A\left(\exists \mbox{ path of disagr. from } x \mbox{ to } \neig_{|\const|-2}(\partial A)\right)	\\
\leq	&~	\sum_{k = n-|\const|+2}^\infty \sum_{|\Path| = k} \pi^{\omega_1}_A\left(\left\{\alpha_1: \left|\left\{ y \in \Path: \alpha_1(y) \neq v^*\right\}\right| \geq \frac{k}{2}\right\}\right)  \\
	&~	+ \sum_{k = n-|\const|+2}^\infty \sum_{|\Path| = k} \pi^{\omega_2}_A\left(\left\{\alpha_2: \left|\left\{ y \in \Path: \alpha_2(y) \neq v^*\right\}\right| \geq \frac{k}{2}\right\}\right)	\nonumber\\
\leq	&~	2\sum_{k = n-|\const|+2}^\infty \sum_{|\Path| = k} |\const|^{k\Delta^{|\const|-1}}\lambda^{-\frac{k}{2}}	\\
\leq	&~	2\sum_{k = n-|\const|+2}^\infty \Delta(\Delta-1)^k \left(\frac{|\const|^{\Delta^{|\const|-1}}}{\lambda^{1/2}}\right)^k 	\\	
=	&~	2\Delta\sum_{k = n-|\const|+2}^\infty \left(\frac{(\Delta-1)|\const|^{\Delta^{|\const|-1}}}{\lambda^{1/2}}\right)^k.
\end{align}

Finally, we have
\begin{align}
\left| \pi_A^{\omega_1}(\beta) - \pi_A^{\omega_2}(\beta) \right|	&	\leq \sum_{x \in B}\mathbb{P}^{\omega_1,\omega_2}_A\left(\exists \mbox{ path of disagr. from } x \mbox{ to } \neig_{|\const|-2}(\partial A)\right)	\\
												&	\leq 2|B|\Delta\sum_{k = n-|\const|+2}^\infty \left(\frac{(\Delta-1)|\const|^{\Delta^{|\const|-1}}}{\lambda^{1/2}}\right)^k,
\end{align}
so, in order to have exponential decay, it suffices to take
\begin{equation}
\lambda_0(|\const|,\Delta) := (\Delta-1)^2|\const|^{2\Delta^{|\const|-1}} < \lambda,
\end{equation}
and we note that any decay rate $\gamma$ is achievable by taking $\lambda$ sufficiently large.
\end{proof}

\begin{proof}[Proof of Proposition \ref{dism-charact}]
The implication $(1) \implies (5)$ follows from Proposition \ref{dism-highRate}. Since WSM implies uniqueness, we have $(5) \implies (4)$. The implications $(4) \implies (3) \implies (2) \implies (1) $ can be found in \cite[Theorem 4.1]{1-brightwell}.
\end{proof}

A priori, one would be tempted to think that the proof of Proposition \ref{dism-highRate} could give SSM instead of just WSM, since the coupling in Theorem \ref{vandenberg} involves a path of disagreement to $\Sigma_{\partial A}(\omega_1,\omega_2)$ and not just to $\partial A$, just as in the definition of SSM. One of the motivations of this paper is to illustrate that this is not the case (see the examples in Section \ref{section9}), mainly due to combinatorial obstructions. We will see that in order to have an analogous result for the SSM property, $\const$ must satisfy even stronger conditions than dismantlability, which guarantees WSM by Proposition \ref{dism-charact}. One of the main issues is that our proof required that $\dist\left(B,\partial A\right) > |\const|-2$, so $B$ cannot be arbitrarily close to $\partial A$, which is in opposition to the spirit of SSM.

Notice that, by Proposition \ref{dism-charact}, $\homspace$ is strongly irreducible for every board $\board$ if and only if $\const$ is dismantlable. Clearly, the forward direction still holds if ``strongly irreducible'' is replaced by ``TSSM,'' since TSSM implies strongly irreducible. Later, we will address the question of whether the reverse direction holds with this replacement.

For a dismantlable constraint graph $\const$ and a particular or arbitrary board $\board$, we are interested in whether or not $\homspace$ is TSSM and whether or not there exists a Gibbs $(\board,\const,\Phi)$-specification $\pi$ that satisfies exponential SSM with a decay rate that can be arbitrarily high. The following result shows that these two desired conclusions are related.

\begin{theorem}[{\cite[Theorem 5.2]{1-briceno}}]
\label{highrate}
Let $\pi$ be a Gibbs $(\Z^2,\const,\Phi)$-specification that satisfies exponential SSM with decay rate $\gamma > 4\log|\const|$. Then, $\mathrm{Hom}(\Z^2,\const)$ satisfies TSSM.
\end{theorem}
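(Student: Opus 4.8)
The plan is to show the contrapositive's data directly: assuming exponential SSM with a large decay rate $\gamma$, I want to verify the single-site TSSM hypothesis of Lemma \ref{lemSing} for $\mathrm{Hom}(\Z^2,\const)$. So fix $x,y \in \bVert(\Z^2)$ with $\dist(x,y) \geq g$ (for $g$ to be chosen), a finite $S \Subset \bVert(\Z^2)$, and configurations $\alpha \in \cVert^{\{x\}}$, $\beta \in \cVert^{\{y\}}$, $\sigma \in \cVert^S$ with $[\alpha\sigma]^\board_\const, [\sigma\beta]^\board_\const \neq \emptyset$; I must produce a point in $[\alpha\sigma\beta]^\board_\const$. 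First I would enlarge $S$: let $\omega_1 \in [\alpha\sigma]$ and $\omega_2 \in [\sigma\beta]$, pick a large box $A \Subset \bVert(\Z^2)$ containing $\neig_N(\{x\} \cup \{y\} \cup S)$ for a big radius $N$, and set $\sigma' = \left.\omega_1\right|_{S'}$ where $S' = \partial A \cup S$. The key observation is that $\omega_1$ and $\omega_2$ agree on $S$ but possibly not on $\partial A$; I want to compare the conditional distributions $\pi^{\omega_1}_{A \setminus S'}$ and $\pi^{\omega_2}_{A\setminus S'}$ at the single sites $x$ and $y$, using SSM. Here the set of disagreement on the boundary $\partial(A\setminus S')$ is a subset of $\partial A$ (since $\omega_1,\omega_2$ agree on $S$), which sits at distance roughly $N$ from $x$ and $y$.

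Next I would run the standard counting/volume argument. By SSM applied with the region $A \setminus S'$, singleton $B = \{x\}$, and boundary conditions $\omega_1,\omega_2$, we get
\begin{equation}
\left|\pi^{\omega_1}_{A\setminus S'}(\alpha) - \pi^{\omega_2}_{A\setminus S'}(\alpha)\right| \leq C e^{-\gamma N},
\end{equation}
and similarly at $y$ with $\beta$; and also applied "from inside" comparing the measure conditioned on $\sigma'$ at $x$ versus the full conditional. The point is that $\pi^{\omega_1}_{A\setminus S'}(\alpha) > 0$ because $\alpha\sigma$ (hence $\alpha\sigma'$ after extending $\omega_1$) is realizable; in fact $\pi^{\omega_1}_{A \setminus S'}(\alpha) \geq |\const|^{-|A\setminus S'|} \geq |\const|^{-c N^2}$ for some constant since the uniform lower bound on a nonzero cylinder probability in a region of $|A \setminus S'|$ sites is at least $|\const|^{-|A\setminus S'|}$ (every locally-admissible completion, and there is at least one, has $\pi$-probability at least this). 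Comparing $|\const|^{-cN^2}$ with $Ce^{-\gamma N}$ is hopeless — so this is exactly where the hypothesis $\gamma > 4\log|\const|$ must be used, and the geometry of $\Z^2$ must enter, not just the volume bound. The fix is the ``fattening'' trick from \cite{1-briceno}: instead of a full box $A$, take $A$ to be an annular or plus-shaped region so that $|A \setminus S'|$ is only $O(N)$, not $O(N^2)$. Concretely, $\Z^2$ has the feature that one can find a set $A$ with $\{x,y\} \subseteq A$, $\dist(\{x,y\}, \partial A) \approx N/2$, whose boundary separates $\{x,y\}$ from infinity, yet $|A| = O(N)$ — e.g. a thin ``L'' or ``staple'' connecting a small box around $x$ to a small box around $y$ along a shortest lattice path. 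Then the lower bound becomes $|\const|^{-O(N)} = e^{-O(N)\log|\const|}$, and since $\gamma > 4\log|\const|$ with the implied constant in $O(N)$ being at most (essentially) $4$ for a width-one corridor in $\Z^2$, the SSM bound $Ce^{-\gamma N}$ eventually dominates, forcing the conditional probabilities to be genuinely positive for large $N$.

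Having arranged that, the endgame is: choose $N$ large enough that $C e^{-\gamma N} < \tfrac12 |\const|^{-O(N)}$, so that $\pi^{\sigma''}_{A \setminus S'}(\alpha\text{ at }x \text{, }\beta\text{ at }y) > 0$ for a suitable joint boundary condition $\sigma''$ agreeing with $\sigma$ on $S$; equivalently, there is a point of $\mathrm{Hom}(\Z^2,\const)$ whose restriction to $A$ realizes $\alpha$, $\sigma$, and $\beta$ simultaneously, which gives $[\alpha\sigma\beta]^\board_\const \neq \emptyset$. (One has to be slightly careful chaining two single-site SSM estimates — first fix $x$'s value to $\alpha$ and apply SSM in the remaining region to control $y$, using that conditioning on one more site only changes the relevant decay distance by a constant; Lemma \ref{ssmSing} and Lemma \ref{lemSing} are the tools that let us reduce to singletons cleanly.) Finally, the gap $g$ emerging from this construction is an absolute constant (large enough that the corridor construction makes sense and $N \geq N_0$ is forced), and by Lemma \ref{lemSing} the single-site property upgrades to TSSM with that gap for all finite $A,B,S$. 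The main obstacle, and the only place the specific hypotheses bite, is precisely the competition between the combinatorial lower bound $|\const|^{-\#\text{sites}}$ and the SSM decay $e^{-\gamma N}$: getting the number of sites down to $O(N)$ with the right constant is what forces both the planar structure of $\Z^2$ and the threshold $\gamma > 4\log|\const|$.
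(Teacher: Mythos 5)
First, note that the paper does not actually prove this statement---it is quoted from \cite[Theorem 5.2]{1-briceno} with only the remark that the proof adapts from measures to specifications---so your proposal has to stand on its own. It does not: the central step fails. You correctly identify the tension between the SSM upper bound $Ce^{-\gamma N}$ and a combinatorial lower bound of the form $|\const|^{-\#\mathrm{sites}}$, but your proposed resolution---replacing the box by a thin ``corridor'' or ``staple'' $A$ with $|A|=O(N)$---is incompatible with the other half of your argument. For the SSM estimate at $x$ to produce a factor $e^{-\gamma N}$ you need $\dist\bigl(x,\Sigma_{\partial A}(\omega_1,\omega_2)\bigr)\geq N$, and since $\omega_1,\omega_2$ are only known to agree on $S$, the disagreement set can be all of $\partial A\setminus S$; hence $A$ must contain the full ball $\neig_N(x)$, which in $\Z^2$ has $\Theta(N^2)$ sites. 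A width-one corridor has every interior site adjacent to $\partial A$, so SSM then only yields a constant bound, and the argument collapses. (The fact that a width-one corridor would give threshold $\log|\const|$ rather than $4\log|\const|$ is already a warning that the geometry is wrong: the constant $4$ is the perimeter growth rate of the $\ell^1$-ball in $\Z^2$, i.e.\ $|\{z:\dist(z,x)=m\}|=4m$, not the volume of a corridor.)

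The object that genuinely has $O(N)$ sites while keeping the boundary disagreement far from $x$ is a separating \emph{annulus} $D=\{z:\dist(z,x)=m\}\setminus S$, and the counting bound must be applied to the \emph{marginal} of the conditioned specification on $D$ (by pigeonhole, some globally admissible $\theta^*\in\cVert^D$ compatible with $\alpha$ at $x$ receives probability at least $|\const|^{-4m}$), not to the configuration on the whole region containing $x$. One then uses SSM to transfer positivity of that annular event from the boundary condition $\omega_1$ to $\omega_2$, and finally \emph{glues}: a point realizing $\alpha\sigma\theta^*$ inside the annulus and a point realizing $\sigma\beta\theta^*$ outside agree on the separating set $D\cup S$, so the topological MRF property of $\homspace$ produces a point of $[\alpha\sigma\beta]^\board_\const$. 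Your endgame skips this entirely---``$\pi^{\sigma''}_{A\setminus S'}(\alpha\text{ at }x,\beta\text{ at }y)>0$ for a suitable joint boundary condition $\sigma''$'' presupposes a boundary condition compatible with both $\alpha$ and $\beta$, which is exactly what is in question. So the two missing ingredients are the annulus-marginal pigeonhole (which is where $4\log|\const|$ actually enters) and the gluing step along the annulus; the corridor idea should be discarded.
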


The preceding result is stated and proven for Gibbs measures satisfying SSM. However, the proof can be easily modified for specifications.

One of our main goals in this work is to look for conditions on $\homspace$ suitable for having a Gibbs specification that satisfies SSM. SSM seems to be related with TSSM, as the previous results show. In the following section, we explore some additional properties of TSSM.


\section{The unique maximal configuration property}
\label{section5}

Fix a constraint graph $\const$ and consider an arbitrary board $\board$. Given a linear order $\prec$ on the set of vertices $\cVert$, we consider the partial order (that, in a slight abuse of notation, we also denote by $\prec$) on $\cVert^\bVert$ obtained by extending coordinate-wise the linear order $\prec$ to subsets of $\bVert$, i.e. given $\alpha_1, \alpha_2 \in \cVert^A$, for some $A \subseteq \bVert$, we say that $\alpha_1 \prec \alpha_2$ iff $\alpha_1(x) \prec \alpha_2(x)$, for all $x \in A$, and  $\alpha_1 \preccurlyeq \alpha_2$ iff, for all $x \in A$, $\alpha_1(x) \prec \alpha_2(x)$ or $\alpha_1(x) = \alpha_2(x)$. In addition, if two vertices $u,v \in \cVert$ are such that $u \sim v$ and $u \prec v$, we will denote this by $u \precsim v$.

\begin{definition}
\label{umcp}
Given $g \in \N$, we say that $\homspace$ satisfies the \emph{unique maximal configuration (UMC) property with distance $g$} if there exists a linear order $\prec$ on $\cVert$ such that, for every $A \Subset \board$,
\begin{enumerate}
\item[(M1)] for every $\alpha \in \Leng_{A}(\homspace)$, there is a unique point $\omega_\alpha \in [\alpha]^\board_\const$ such that $\omega \preccurlyeq \omega_\alpha$ for every point $\omega \in [\alpha]^\board_\const$, and
\item[(M2)] for any two $\alpha_1,\alpha_2 \in \Leng_{A}(\homspace)$, $\Sigma_\bVert(\omega_{\alpha_1},\omega_{\alpha_2}) \subseteq \neig_g(\Sigma_{A}(\alpha_1,\alpha_2))$.
\end{enumerate}
\end{definition}

Notice that if $\homspace$ satisfies the UMC, then for any $\alpha \in \Leng_{A}(\homspace)$ and $\beta = \left.\alpha\right|_B$ with $B \subseteq A$, it is the case that $\omega_\alpha \preccurlyeq \omega_\beta$. This is natural, since we can see the configurations $\alpha$ and $\beta$ as ``restrictions'' to be satisfied by $\omega_\alpha$ and $\omega_\beta$, respectively. In addition, observe that condition $(M2)$ in Definition \ref{umcp} implies that $\Sigma_\bVert(\omega_\alpha,\omega_\beta) \subseteq \neig_g(A \setminus B)$. In particular, by taking $B = \emptyset$, we see that if $\homspace$ satisfies the UMC property, then there must exist a greatest element $\omega_* \in \homspace$ where $\omega \preccurlyeq \omega_*$ for every $\omega \in \homspace$ and $\Sigma_\bVert(\omega_\alpha,\omega_*) \subseteq \neig_g(A)$ for every $\alpha \in \Leng_{A}(\homspace)$.

The following proposition shows that the UMC property is related with the combinatorial properties introduced in Section \ref{section4}.

\begin{proposition}
\label{UMC-TSSM}
Suppose $\homspace$ satisfies the unique maximal configuration property with distance $g$. Then, $\homspace$ satisfies TSSM with gap $2g+1$.
\end{proposition}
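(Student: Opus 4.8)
The plan is to verify the single-site version of TSSM and then invoke Lemma \ref{lemSing}. So fix sites $x, y \in \bVert$ with $\dist(x,y) \geq 2g+1$, a finite set $S \Subset \bVert$, and configurations $\alpha \in \cVert^{\{x\}}$, $\beta \in \cVert^{\{y\}}$, $\sigma \in \cVert^S$ such that $[\alpha\sigma]^\board_\const$ and $[\sigma\beta]^\board_\const$ are both nonempty. I must produce a point of $\homspace$ restricting to $\alpha$ on $x$, to $\beta$ on $y$, and to $\sigma$ on $S$.

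The key idea is to use the maximal points furnished by (M1) as ``canonical'' fillings and to exploit (M2) to control where two such maximal points can differ. First I would let $\omega_{\alpha\sigma}$ be the unique maximal point in $[\alpha\sigma]^\board_\const$ and $\omega_\sigma$ the unique maximal point in $[\sigma]^\board_\const$ (both exist by (M1), since $\alpha\sigma$ and $\sigma = \left.(\alpha\sigma)\right|_S$ are globally admissible). By the monotonicity remark following Definition \ref{umcp}, $\omega_{\alpha\sigma} \preccurlyeq \omega_\sigma$, and by (M2) (with $A = \{x\}\cup S$, $B = S$) we get $\Sigma_\bVert(\omega_{\alpha\sigma},\omega_\sigma) \subseteq \neig_g(\{x\})$; in particular $\omega_{\alpha\sigma}$ and $\omega_\sigma$ agree everywhere outside the ball $\neig_g(x)$. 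Symmetrically, $\omega_{\sigma\beta}$, the maximal point in $[\sigma\beta]^\board_\const$, satisfies $\Sigma_\bVert(\omega_{\sigma\beta},\omega_\sigma) \subseteq \neig_g(\{y\})$, so $\omega_{\sigma\beta}$ agrees with $\omega_\sigma$ outside $\neig_g(y)$. Since $\dist(x,y) \geq 2g+1$, the balls $\neig_g(x)$ and $\neig_g(y)$ are disjoint and in fact contain no pair of adjacent vertices, so they are ``independent'' as far as the homomorphism constraint is concerned.

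Now I would define a configuration $\upsilon$ on $\bVert$ by setting $\upsilon = \omega_{\alpha\sigma}$ on $\neig_g(x)$, $\upsilon = \omega_{\sigma\beta}$ on $\neig_g(y)$, and $\upsilon = \omega_\sigma$ on the remainder $\bVert \setminus (\neig_g(x) \cup \neig_g(y))$. This is well-defined because on the overlaps the three points agree: outside $\neig_g(x)$ we have $\omega_{\alpha\sigma} = \omega_\sigma$, and outside $\neig_g(y)$ we have $\omega_{\sigma\beta} = \omega_\sigma$. I claim $\upsilon \in \homspace$: for any edge $\{u,w\} \in \bEdg$, both endpoints lie in one of the three regions where $\upsilon$ coincides with one of $\omega_{\alpha\sigma}, \omega_{\sigma\beta}, \omega_\sigma$ — here is where $\dist(x,y) \geq 2g+1$ is used, to rule out an edge straddling $\neig_g(x)$ and $\neig_g(y)$, while an edge straddling $\neig_g(x)$ and the ``middle'' region is harmless since $\omega_{\alpha\sigma} = \omega_\sigma$ on $\partial\neig_g(x)$ — so $\{\upsilon(u),\upsilon(w)\} \in \cEdg$. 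Finally $\upsilon(x) = \omega_{\alpha\sigma}(x) = \alpha(x)$, $\upsilon(y) = \omega_{\sigma\beta}(y) = \beta(y)$, and for $z \in S$: if $z \in \neig_g(x)$ then $\upsilon(z) = \omega_{\alpha\sigma}(z) = \sigma(z)$, if $z \in \neig_g(y)$ then $\upsilon(z) = \omega_{\sigma\beta}(z) = \sigma(z)$, and otherwise $\upsilon(z) = \omega_\sigma(z) = \sigma(z)$; in all cases $\left.\upsilon\right|_S = \sigma$. Hence $[\alpha\sigma\beta]^\board_\const \ni \upsilon$, and Lemma \ref{lemSing} upgrades this to TSSM with gap $2g+1$.

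The main obstacle I anticipate is the careful bookkeeping needed to confirm that the stitched-together $\upsilon$ is genuinely a graph homomorphism — i.e. checking that every edge of $\board$ has both endpoints in a single region on which $\upsilon$ equals one fixed point, which forces me to use not just disjointness of $\neig_g(x)$ and $\neig_g(y)$ but the stronger fact that they are separated by distance at least $1$ (guaranteed by $\dist(x,y) \geq 2g+1 > 2g$), and to observe that the disagreement sets from (M2) are contained in the \emph{closed} $g$-balls so that agreement holds on the boundary vertices where the regions meet. Everything else is a direct unwinding of the definitions.
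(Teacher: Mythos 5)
Your argument is essentially the paper's own proof: take the maximal points $\omega_{\alpha\sigma}$, $\omega_\sigma$, $\omega_{\sigma\beta}$, use (M2) (via the remark after Definition \ref{umcp}) to confine the disagreements to $\neig_g(x)$ and $\neig_g(y)$, glue the three points along the region where they all agree, and upgrade via Lemma \ref{lemSing}. The one imprecise step --- your claim that for $\dist(x,y)\geq 2g+1$ the sets $\neig_g(x)$ and $\neig_g(y)$ contain no adjacent pair (false at distance exactly $2g+1$, where a vertex at distance $g$ from $x$ on a geodesic is adjacent to one at distance $g$ from $y$, so the edge-by-edge verification of the glued point genuinely needs $\dist(x,y)\geq 2g+2$) --- is shared verbatim by the paper, which asserts $\overline{\neig_g(x)}\cap\neig_g(y)=\emptyset$ under the same hypothesis; this off-by-one is harmless since gap $2g+2$ suffices for every later use of the proposition.
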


\begin{proof}
Consider a pair of sites $x,y \in \bVert$ with $\dist(x,y) \geq 2g+1$, a set $S \Subset \bVert$, and configurations $\alpha \in \cVert^{\{x\}}$, $\beta \in \cVert^{\{y\}}$ and $\sigma \in \cVert^{S}$ such that $[\alpha\sigma]^\board_\const, [\sigma\beta]^\board_\const \neq \emptyset$. Take the maximal configurations $\omega_\sigma$, $\omega_{\alpha\sigma}$ and $\omega_{\sigma\beta}$. Notice that $\Sigma_\bVert(\omega_{\alpha\sigma},\omega_{\sigma}) \subseteq \neig_g(x)$ and $\Sigma_\bVert(\omega_{\sigma},\omega_{\sigma\beta}) \subseteq \neig_g(y)$. Since $\dist(x,y) \geq 2g+1$, we can conclude that $\overline{\neig_g(x)} \cap \neig_g(y) = \emptyset$ and $\neig_g(x) \cap \overline{\neig_g(y)} = \emptyset$. Therefore,
\begin{equation}
\left.\omega_{\sigma}\right|_{\neig_g(x)^c \cap \neig_g(y)^c} = \left.\omega_{\alpha\sigma}\right|_{\neig_g(x)^c \cap \neig_g(y)^c} = \left.\omega_{\sigma\beta}\right|_{\neig_g(x)^c \cap \neig_g(y)^c},
\end{equation}
and since $\homspace$ is a topological MRF, we have
\begin{equation}
\left.\omega_{\alpha\sigma}\right|_{\neig_g(x)} \left.\omega_{\sigma}\right|_{\neig_g(x)^c \cap \neig_g(y)^c} \left.\omega_{\sigma\beta}\right|_{\neig_g(y)} \in \homspace,
\end{equation}
so $[\alpha\sigma\beta]^\board_\const \neq \emptyset$. Using Lemma \ref{lemSing}, we conclude.
\end{proof}

Now, given $\cVert = \{v_1,\dots,v_k,\dots,v_{|\const|}\}$, define $v_1 \prec \cdots \prec v_k \prec \cdots \prec v_{|\const|}$ and define $\Phi^\prec_\lambda$ to be the n.n. interaction given by
\begin{align}
\Phi^\prec_\lambda(v_k)  = -k\log\lambda,	&	\mbox{ and }	\left.\Phi^\prec_\lambda\right|_{\cEdg} \equiv 0,
\end{align}
where $\lambda > 1$. We have the following lemma.

\begin{lemma}
\label{bound}
Let $\board$ be a board of bounded degree $\Delta$ and suppose that $\homspace$ satisfies the unique maximal configuration property with distance $g$. Given $\lambda > 0$, consider the Gibbs $(\board,\const,\Phi_\lambda)$-spe\-ci\-fi\-ca\-tion $\pi$, a point $\omega \in \homspace$, and sets $B \subseteq A \Subset \bVert$. Then, for any $k \in \N$,
\begin{equation}
\pi^\omega_A\left(\left\{\alpha: \left|\left\{ y \in B: \alpha(y) \prec \omega_\delta(y)\right\}\right| \geq k\right\}\right) \leq |\const|^{|B|\Delta^{g+1}} \lambda^{-k},
\end{equation}
where $\delta = \omega|_{\partial A}$.
\end{lemma}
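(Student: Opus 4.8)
The plan is to mimic the structure of the proof of Lemma~\ref{boundDismant}, replacing the persistent vertex $v^*$ (which played the role of a single "safe" value) by the maximal configuration $\omega_\delta$ associated with the boundary condition $\delta = \omega|_{\partial A}$, and replacing the crude bound "$\alpha$ equals $v^*$ or not" by the order-sensitive comparison "$\alpha(y) \prec \omega_\delta(y)$ or $\alpha(y) = \omega_\delta(y)$." Fix an arbitrary configuration $\alpha \in \cVert^A$ with $\alpha\left.\omega\right|_{A^c} \in \homspace$, and set $\omega_1 = \alpha\left.\omega\right|_{A^c}$. The key substitute for Lemma~\ref{persistent} is the UMC property itself: the point $\omega_\delta \in [\delta]^\board_\const = [\left.\omega\right|_{\partial A}]^\board_\const$ dominates $\omega_1$ coordinatewise (since $\omega_1 \in [\delta]^\board_\const$), so $\omega_1 \preccurlyeq \omega_\delta$; moreover, by condition (M2) applied to $\alpha$ and $\delta$ (both elements of $\Leng_{\overline A}(\homspace)$, say, or more directly by the remark following Definition~\ref{umcp}), the two maximal configurations differ only inside $\neig_g$ of the region where $\alpha$ and $\delta$ disagree, which is contained in $\neig_g(A)$; but what we actually need is localization: $\Sigma_\bVert(\omega_1,\omega_\delta)$ should be confined to a controlled neighbourhood $C$ of $B$.

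**The coupling/domination step.** Let $C := \neig_g(B)$ (or a suitable enlargement so that $C \subseteq A$, using the same "w.l.o.g." device as in Proposition~\ref{dism-highRate} that lets us assume $B$ is far enough from $\partial A$; if not, absorb the loss into the constant $C$ of the decay function, or note $|C| \le |B|\Delta^{g+1}$ regardless). The point is to build, from $\omega_1$, a dominating modification $\omega_2 \in \homspace$ that agrees with $\omega_1$ off $C$, equals $\omega_\delta$ on $B$, and satisfies $\omega_1 \preccurlyeq \omega_2$ coordinatewise; the natural candidate is $\omega_2 = \left.\omega_\delta\right|_{C}\left.\omega_1\right|_{C^c}$, which lies in $\homspace$ because $\omega_\delta$ and $\omega_1$ agree on $\partial C$ (both equal $\omega_\delta$ there, since $\partial C$ avoids $\Sigma_\bVert(\omega_1,\omega_\delta)$ once $g$-localization is arranged) and $\homspace$ is a topological MRF. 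Then, exactly as in Lemma~\ref{boundDismant}, if $\left|\left\{y \in B : \alpha(y) \prec \omega_\delta(y)\right\}\right| \ge k$, the energy difference between restricting to $\left.\omega_2\right|_C$ versus $\left.\omega_1\right|_C$ (conditioned on $\left.\omega_1\right|_{A\setminus C}$) is at least $k\log\lambda$, because each of those $k$ sites has its $\Phi^\prec_\lambda$-weight strictly decreased (the value moves up in the order $\prec$, and $\Phi^\prec_\lambda(v_j) = -j\log\lambda$ is strictly decreasing in $j$), while every other site in $C$ has its weight non-decreased (since $\omega_1 \preccurlyeq \omega_2$ everywhere) and the edge terms vanish ($\left.\Phi^\prec_\lambda\right|_{\cEdg} \equiv 0$). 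Hence
\begin{equation}
\frac{\pi^\omega_A\left(\left.\omega_2\right|_C \,\middle\vert\, \left.\omega_1\right|_{A\setminus C}\right)}{\pi^\omega_A\left(\left.\omega_1\right|_C \,\middle\vert\, \left.\omega_1\right|_{A\setminus C}\right)} = \frac{\pi^{\omega_1}_C\left(\left.\omega_2\right|_C\right)}{\pi^{\omega_1}_C\left(\left.\omega_1\right|_C\right)} \ge \lambda^k,
\end{equation}
so $\pi^\omega_A\left(\left.\omega_1\right|_C \,\middle\vert\, \left.\omega_1\right|_{A\setminus C}\right) \le \lambda^{-k}$.

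**Finishing.** Averaging over the configurations $\beta \in \Leng_{A\setminus C}(\homspace)$ compatible with $\left.\omega_1\right|_C\left.\omega\right|_{A^c}$ gives $\pi^\omega_A(\left.\alpha\right|_C) \le \lambda^{-k}$ for every such $\alpha$ with the stated disagreement count. Since the event $\left\{\alpha : \left|\left\{y \in B : \alpha(y) \prec \omega_\delta(y)\right\}\right| \ge k\right\}$ is a union of cylinders determined by $\left.\alpha\right|_C$, and there are at most $|\cVert|^{|C|} \le |\const|^{|B|\Delta^{g+1}}$ such cylinders (using $|C| = |\neig_g(B)| \le |B|\Delta^{g+1}$), a union bound yields
\begin{equation}
\pi^\omega_A\left(\left\{\alpha : \left|\left\{y \in B : \alpha(y) \prec \omega_\delta(y)\right\}\right| \ge k\right\}\right) \le |\const|^{|B|\Delta^{g+1}} \lambda^{-k},
\end{equation}
as claimed. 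The main obstacle I anticipate is the localization issue: ensuring that $\Sigma_\bVert(\omega_1,\omega_\delta)$ (equivalently, where the modification lives) is genuinely confined to $C = \neig_g(B)$ rather than merely to $\neig_g(A)$ — this is what makes the union-bound count come out as $|B|\Delta^{g+1}$ and not something depending on $|A|$. Resolving it should follow from the monotonicity remark after Definition~\ref{umcp} (that $\omega_\alpha \preccurlyeq \omega_\beta$ whenever $\beta$ is a restriction of $\alpha$, applied with $\alpha \leftrightarrow \omega_1|_{\overline B}$ and $\beta \leftrightarrow \delta$) together with (M2), plus possibly the same far-from-boundary reduction used in Proposition~\ref{dism-highRate}; I would want to state the needed consequence of UMC cleanly as a preliminary observation before running the energy estimate.
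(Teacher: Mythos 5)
Your overall architecture (replace the configuration on a $g$-neighbourhood $C$ of $B$ by something order-dominating, get a $\lambda^{k}$ energy ratio since the edge terms vanish, average over the outside, and union-bound over $|\const|^{|C|}$ cylinders) matches the paper's, and the finishing steps are fine. But there is a genuine gap at the step you yourself flag as the main obstacle, and the resolution you sketch does not close it. Your candidate point $\omega_2 = \left.\omega_\delta\right|_{C}\left.\omega_1\right|_{C^c}$ need not lie in $\homspace$: for the gluing you need $\omega_\delta$ and $\omega_1$ to agree on $\partial C$, but $\omega_1 = \alpha\left.\omega\right|_{A^c}$ is built from an \emph{arbitrary} $\alpha$ in the cylinder $[\delta]^\board_\const$, and nothing localizes $\Sigma_\bVert(\omega_1,\omega_\delta)$ near $B$ — condition (M2) only controls the disagreement set of two \emph{maximal} configurations, not of a maximal configuration against an arbitrary point of the cylinder, and the monotonicity remark after Definition \ref{umcp} gives only $\omega_1 \preccurlyeq \omega_\delta$, with no spatial control. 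Also, the ``push $B$ away from $\partial A$'' device from Proposition \ref{dism-highRate} is not available here: the lemma must hold with $B$ adjacent to $\partial A$, since that is precisely what distinguishes the SSM estimate from the WSM one (as the paper remarks after Proposition \ref{dism-highRate}).

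The paper's fix is to condition on more of $\alpha$ rather than to localize the disagreement. Set $C = A \cap \neig_g(B)$, let $D = A \cap \partial C$ and $\eta = \left.\alpha\right|_D$, and use the maximal configuration $\omega_{\delta\eta}$ for the \emph{augmented} boundary condition $\delta\eta$. This object does everything your $\omega_2$ was supposed to do: it agrees with $\omega_1$ on all of $\partial C$ by construction (it extends $\eta$ on $D$ and $\delta$ on $\partial A \cap \partial C$), so $\left.\omega_{\delta\eta}\right|_C\left.\omega_1\right|_{\bVert\setminus C} \in \homspace$ by the topological MRF property; it dominates $\alpha$ everywhere since $\omega_1 \in [\delta\eta]^\board_\const$; and it agrees with $\omega_\delta$ on $B$ because (M2), now applied to the two maximal configurations $\omega_{\delta\eta}$ and $\omega_\delta$, confines their disagreement to $\neig_g(D)$, which is disjoint from $B$ since $\dist(B,D) > g$. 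With $\omega_{\delta\eta}$ in place of your $\omega_2$, your energy and counting computations go through verbatim. You should restructure the domination step around this auxiliary maximal configuration rather than around a direct comparison of $\omega_1$ with $\omega_\delta$.
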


\begin{proof}
Consider an arbitrary configuration $\alpha \in \cVert^A$ such that $\alpha\left.\omega\right|_{A^c} \in \homspace$ (and, in particular, such that $\pi^\omega_A(\alpha) > 0$). Take the set $C = A \cap \neig_{g}(B)$ and decompose its boundary $C$ into the two subsets $A \cap \partial C$ and $\partial A \cap \partial C$. Consider $D = A \cap \partial C$ and name $\eta = \left.\alpha\right|_D$. Since $\delta\eta \in \Leng_{\partial A \cup D}(\homspace)$, there exists a unique maximal configuration $\omega_{\delta\eta}$. Clearly, $\alpha(x) \leq \omega_{\delta\eta}(x)$, for every $x \in A$. Moreover, $\left.\omega_{\delta\eta}\right|_B = \left.\omega_{\delta}\right|_B$, since $\Sigma_\bVert(\omega_{\delta\eta},\omega_\delta) \subseteq \neig_g(D)$ and $\dist(B,D) > g$, so $\neig_g(D) \cap B = \emptyset$. 

Now, given some $k \leq |B|$, suppose that $\alpha$ is such that $\left|\left\{ y \in B: \alpha(y) \prec \omega_\delta(y)\right\}\right| \geq k$. Then, $\left|\left\{ y \in B: \alpha(y) \prec \omega_{\delta\eta}(y)\right\}\right| \geq k$ and, by the (topological and measure-theoretical) MRF property,
\begin{equation}
\frac{\pi^\omega_A\left(\left.\omega_{\delta\eta}\right|_{C} \middle\vert \left.\alpha\right|_{A \setminus C}\right)}{\pi^\omega_A\left(\left.\alpha\right|_{C} \middle\vert \left.\alpha\right|_{A \setminus C}\right)} = \frac{\pi^{\omega_{\delta\eta}}_C\left(\left.\omega_{\delta\eta}\right|_{C}\right)}{\pi^{\omega_{\delta\eta}}_C\left(\left.\alpha\right|_{C}\right)} \geq \lambda^{k}.
\end{equation}

Therefore,
\begin{equation}
\pi^\omega_A\left(\left.\alpha\right|_{C} \middle\vert \left.\alpha\right|_{A \setminus C}\right) \leq \lambda^{-k}.
\end{equation}

Next, by taking averages over all configurations $\beta \in \Leng_{A \setminus C}(\homspace)$ such that
\begin{equation}
\beta\left.\alpha\right|_C\left.\omega\right|_{A^c} \in \homspace,
\end{equation}
we have
\begin{equation}
\pi^\omega_A(\left.\alpha\right|_{C}) = \sum_{\beta} \pi^\omega_A\left(\left.\alpha\right|_{C} \middle\vert \beta\right) \pi^\omega_A\left(\beta\right) \leq \sum_{\beta} \lambda^{-k} \pi^\omega_A\left(\beta\right) = \lambda^{-k}.
\end{equation}

Notice that $|C| \leq \left|\neig_{g}(B)\right| \leq |B|\Delta^{g+1}$. In particular, $\left|\Leng_{C}\left(\homspace\right)\right| \leq |\const|^{|B|\Delta^{g+1}}$. Then, since $\alpha$ was arbitrary,
\begin{align}
\pi^\omega_A\left(\left\{\alpha: \left|\left\{ y \in B: \alpha(y) < \omega_\delta(y)\right\}\right| \geq k\right\}\right)	&	\leq \sum_{\substack{\left.\alpha\right|_C: \alpha \in \Leng_A(\homspace),\\\left|\left\{ y \in B: \alpha(y) < \omega_\delta(y)\right\}\right| \geq k}}\pi^\omega_A(\left.\alpha\right|_C)	\\
																					&	\leq |\const|^{|B|\Delta^{g+1}}\lambda^{-k}.
\end{align}
\end{proof}

\begin{proposition}
\label{UMC-highRate}
Let $\board$ be a board of bounded degree $\Delta$ and $\const$ be a constraint graph such that $\homspace$ satisfies the unique maximal configuration property. Then, for all $\gamma > 0$, there exists $\lambda_0 = \lambda_0(\gamma,|\const|,\Delta,g)$ such that for every $\lambda > \lambda_0$, the Gibbs $(\board,\const,\Phi_\lambda)$-specification $\pi$ satisfies exponential SSM with decay rate $\gamma$.
\end{proposition}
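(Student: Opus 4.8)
The plan is to mirror the structure of the proof of Proposition~\ref{dism-highRate}, replacing the persistent-vertex machinery of Lemma~\ref{boundDismant} by its UMC analogue, Lemma~\ref{bound}, and the "path to $\partial A$" argument by a "path to $\Sigma_{\partial A}(\omega_1,\omega_2)$" argument, since the UMC property now controls disagreements relative to arbitrary boundary conditions, not just the distance to $\partial A$. First, by Lemma~\ref{ssmSing}, it suffices to prove the single-site estimate: for $A \Subset \bVert$, $x \in A$, $\beta \in \cVert^{\{x\}}$ and $\omega_1,\omega_2 \in \homspace$, bound $|\pi_A^{\omega_1}(\beta) - \pi_A^{\omega_2}(\beta)|$ by $f(\dist(x,\Sigma_{\partial A}(\omega_1,\omega_2)))$ with $f(n) = Ce^{-\gamma n}$. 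As in Proposition~\ref{dism-highRate}, we may assume $n := \dist(x,\Sigma_{\partial A}(\omega_1,\omega_2)) > g$ (absorbing small $n$ into the constant $C$), and we apply the van den Berg coupling of Theorem~\ref{vandenberg}: $|\pi_A^{\omega_1}(\beta) - \pi_A^{\omega_2}(\beta)| \leq \mathbb{P}^{\omega_1,\omega_2}_A(\alpha_1(x) \neq \alpha_2(x))$, which equals the probability of a path of disagreement from $x$ to $\Sigma_{\partial A}(\omega_1,\omega_2)$.

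The key new step is to bound the probability of such a path of disagreement using Lemma~\ref{bound}. Fix a self-avoiding path $\Path$ of length $k \geq n - g$ from $x$ towards $\Sigma_{\partial A}(\omega_1,\omega_2)$, and shrink $A$ if needed so that $\neig_g(\Path) \subseteq A$ (again, this only costs a bounded shift in $\dist$). Along $\Path$, for each $y$ we have $\alpha_1(y) \neq \alpha_2(y)$, hence at most one of them equals $\omega_{\delta_i}(y)$ where $\delta_i = \omega_i|_{\partial A}$; more precisely, for at least one $i \in \{1,2\}$, at least $k/2$ of the sites $y \in \Path$ satisfy $\alpha_i(y) \prec \omega_{\delta_i}(y)$ (since $\alpha_i(y) \preccurlyeq \omega_{\delta_i}(y)$ always, and the two maximal configurations $\omega_{\delta_1},\omega_{\delta_2}$ agree on $\Path$ because $\Sigma_\bVert(\omega_{\delta_1},\omega_{\delta_2}) \subseteq \neig_g(\Sigma_{\partial A}(\omega_1,\omega_2))$ is disjoint from $\Path$ when $n > g$). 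Therefore Lemma~\ref{bound} applied with $B = \Path$ gives, for each $i$,
\begin{equation}
\pi^{\omega_i}_A\left(\left\{\alpha_i: \left|\left\{ y \in \Path: \alpha_i(y) \prec \omega_{\delta_i}(y)\right\}\right| \geq \tfrac{k}{2}\right\}\right) \leq |\const|^{k\Delta^{g+1}} \lambda^{-k/2}.
\end{equation}

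Summing over the two choices of $i$, over all self-avoiding paths of length $k$ starting at $x$ (of which there are at most $\Delta(\Delta-1)^{k-1}$), and then over all $k \geq n-g$, we obtain
\begin{equation}
\mathbb{P}^{\omega_1,\omega_2}_A\left(\alpha_1(x) \neq \alpha_2(x)\right) \leq 2\Delta \sum_{k \geq n-g} \left(\frac{(\Delta-1)|\const|^{\Delta^{g+1}}}{\lambda^{1/2}}\right)^k.
\end{equation}
Choosing $\lambda_0 = \lambda_0(\gamma,|\const|,\Delta,g)$ large enough that $(\Delta-1)|\const|^{\Delta^{g+1}}\lambda^{-1/2} < e^{-\gamma}$ (for instance $\lambda_0 := e^{2\gamma}(\Delta-1)^2|\const|^{2\Delta^{g+1}}$), the geometric series converges and yields a bound of the form $Ce^{-\gamma n}$ with $n = \dist(x,\Sigma_{\partial A}(\omega_1,\omega_2))$; by Lemma~\ref{ssmSing} this upgrades to SSM with rate $f(n) = |B|Ce^{-\gamma n}$, and $\gamma$ is arbitrary since $\lambda$ can be taken as large as we wish.

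The main obstacle I anticipate is the bookkeeping around the maximal configurations in the coupling: Lemma~\ref{bound} is stated for a single boundary condition $\omega$ and compares $\alpha$ against $\omega_\delta$ with $\delta = \omega|_{\partial A}$, so to use it for both $\omega_1$ and $\omega_2$ simultaneously I must verify that $\omega_{\delta_1}$ and $\omega_{\delta_2}$ genuinely agree along any disagreement path reaching only to $\Sigma_{\partial A}(\omega_1,\omega_2)$ — this is exactly where condition (M2) of the UMC property and the hypothesis $n > g$ are essential, and it is the analogue of the $\dist(B,\partial A) > |\const|-2$ requirement in Proposition~\ref{dism-highRate}. One should also check that the "shrink $A$ so $\neig_g(\Path) \subseteq A$" step is harmless: replacing $A$ by $A' = A \cap \neig_g(\Path)$ does not change the conditional law along $\Path$ by the MRF property, and only changes the relevant distance by at most $g$, which is absorbed into $C$. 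Everything else is the same geometric-series estimate as in Proposition~\ref{dism-highRate}.
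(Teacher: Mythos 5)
Your proposal is correct and follows essentially the same route as the paper's proof: reduce to a single-site estimate via Lemma \ref{ssmSing}, apply the van den Berg coupling, use condition (M2) to see that $\omega_{\delta_1}$ and $\omega_{\delta_2}$ agree along any disagreement path staying outside $\neig_g(\Sigma_{\partial A}(\omega_1,\omega_2))$, and then invoke Lemma \ref{bound} on the path together with a union bound over paths and a geometric series. The subtlety you flag — that $\alpha_i(y)\preccurlyeq\omega_{\delta_i}(y)$ always and the two maximal configurations coincide on the path, so at least half the path sites are strictly below the common maximal configuration for one of the two samples — is exactly the argument the paper uses.
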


\begin{proof}
Let $A \Subset \bVert$, $x \in  A$, $\beta \in \cVert^{\{x\}}$ and $\omega_1,\omega_2 \in \homspace$. W.l.o.g., we may suppose that
\begin{equation}
\dist\left(x,\Sigma_{\partial A}(\omega_1,\omega_2)\right) = n > g.
\end{equation}

By Theorem \ref{vandenberg}, and similarly to the proof of Proposition \ref{UMC-highRate}, we have
\begin{align}
\left| \pi_A^{\omega_1}(\beta) - \pi_A^{\omega_2}(\beta) \right|	&	\leq	\mathbb{P}^{\omega_1,\omega_2}_A(\alpha_1(x) \neq \alpha_2(x))	\\
												&	=	\mathbb{P}^{\omega_1,\omega_2}_A\left(\exists \mbox{ path of disagr. from } x \mbox{ to } \Sigma_{\partial A}(\omega_1,\omega_2) \right)	\\
												&	\leq	\mathbb{P}^{\omega_1,\omega_2}_A\left(\exists \mbox{ path of disagr. from } x \mbox{ to } \neig_{g}(\Sigma_{\partial A}(\omega_1,\omega_2)) \right).
\end{align}

When considering a path of disagreement $\Path$ from $x$ to $\neig_{g}(\Sigma_{\partial A}(\omega_1,\omega_2))$, we can assume (by truncating if necessary) that $\Path \subseteq A \setminus \neig_{g}(\Sigma_{\partial A}(\omega_1,\omega_2))$ and $|\Path| \geq n-g$. By the UMC property, if we take $\delta_1 = \left.\omega_1\right|_{\partial A}$ and $\delta_2 = \left.\omega_2\right|_{\partial A}$, we have $\Sigma_\bVert(\omega_{\delta_1},\omega_{\delta_2}) \subseteq \neig_g(\Sigma_{\partial A}(\omega_1,\omega_2)) = \neig_g(\Sigma_{\partial A}(\delta_1,\delta_2))$, so $\left.\omega_{\delta_1}\right|_{\Path} = \left.\omega_{\delta_2}\right|_{\Path} =: \theta \in \Leng_{\Path}(\homspace)$. Since $\Path$ is a path of disagreement, for every $y \in \Path$ we have $\alpha_1(y) < \alpha_2(y) \leq \theta(y)$ or $\alpha_2(y) < \alpha_1(y) \leq \theta(y)$. In consequence, using Lemma \ref{bound} yields
\begin{align}
	&	\mathbb{P}^{\omega_1,\omega_2}_A\left(\exists \mbox{ path } \Path \mbox{ of disagr. from } x \mbox{ to } \neig_g(\Sigma_{\partial A}(\omega_1,\omega_2)) \right) \\
\leq	&~	\sum_{k = n-g}^\infty \sum_{|\Path| = k} \mathbb{P}^{\omega_1,\omega_2}_A\left(\Path \mbox{ is a path of disagr. from } x \mbox{ to } \neig_g(\Sigma_{\partial A}(\delta_1,\delta_2)) \right)	\\
\leq	&~	\sum_{k = n-g}^\infty \sum_{|\Path| = k} \pi^{\omega_1}_A\left(\left\{\alpha_1: \left|\left\{ y \in \Path: \alpha_1(y) < \theta(y)\right\}\right| \geq \frac{k}{2}\right\}\right)  \\
	&~	+ \sum_{k = n-g}^\infty \sum_{|\Path| = k} \pi^{\omega_2}_A\left(\left\{\alpha_2: \left|\left\{ y \in \Path: \alpha_2(y) < \theta(y)\right\}\right| \geq \frac{k}{2}\right\}\right)	\nonumber\\
\leq	&~	2\sum_{k = n-g}^\infty \sum_{|\Path| = k} |\const|^{k\Delta^{g+1}}\lambda^{-\frac{k}{2}} \leq 2\Delta\sum_{k = n-g}^\infty \left(\frac{(\Delta-1)|\const|^{\Delta^{g+1}}}{\lambda^{1/2}}\right)^k.
\end{align}

Then, by Lemma \ref{ssmSing}, exponential SSM holds whenever
\begin{equation}
\lambda_0(|\const|,\Delta,g) := (\Delta-1)^2|\const|^{2\Delta^{g+1}} < \lambda,
\end{equation}
and any decay rate $\gamma$ may be achieved by taking $\lambda$ large enough.

\end{proof}

Notice that here $\lambda_0$ is defined in terms of $|\const|$, $\Delta$ and $g$. In the WSM proof, $g$ implicitly depended on $|\const|$, but here the two parameters could be, a priori, virtually independent.


\section{Chordal/tree decomposable graphs}
\label{section6}

\subsection{Chordal and dismantlable graphs}

\begin{definition}
A finite simple graph $G$ is said to be \emph{chordal} if all cycles of four or more vertices have a \emph{chord}, which is an edge that is not part of the cycle but connects two vertices of the cycle.
\end{definition}

\begin{definition}
A \emph{perfect elimination ordering} in a finite simple graph $G = (V,E)$ is an ordering $v_1,\dots,v_n$ of $V$ such that $G[v_i \cup \{v_{i+1},\dots,v_n\} \cap \neig(v_i)]$ is a complete graph, for every $1\leq i \leq n = |G|$.
\end{definition}

\begin{proposition}[{\cite{1-fulkerson}}]
\label{elimination}
A finite simple graph $G$ is chordal if and only if it has a perfect elimination ordering.
\end{proposition}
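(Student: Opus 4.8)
The plan is to prove the two implications separately; the content lies in the converse. The forward implication is a short argument by contradiction. Suppose $G$ has a perfect elimination ordering $v_1,\dots,v_n$ and, toward a contradiction, a chordless cycle of length $m\ge 4$. Let $v_i$ be the vertex of this cycle with the smallest index, and let $x,y$ be its two neighbours along the cycle. Since $v_i$ has least index, $x,y\in\{v_{i+1},\dots,v_n\}$, and being cycle-neighbours of $v_i$ they lie in $\neig(v_i)$; thus $x,y\in\{v_{i+1},\dots,v_n\}\cap\neig(v_i)$, so the defining property of the ordering forces $x\sim y$. But in a chordless cycle of length $\ge 4$ the two neighbours of any vertex are non-adjacent, a contradiction. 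Hence every graph admitting a perfect elimination ordering is chordal.

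For the converse I would reduce everything to the existence of a \emph{simplicial vertex} — a vertex $v$ with $G[\neig(v)]$ complete — in every nonempty chordal graph. Granting this, and using the elementary fact that an induced subgraph of a chordal graph is chordal, one builds the ordering greedily: take $v_1$ simplicial in $G$, then $v_2$ simplicial in $G[V\setminus\{v_1\}]$, and so on. By construction $v_i$ is simplicial in $G[\{v_i,\dots,v_n\}]$, which says exactly that $\{v_{i+1},\dots,v_n\}\cap\neig(v_i)$ induces a complete graph; adjoining $v_i$, which is adjacent to all of them, keeps it complete, and this is the perfect elimination condition.

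The heart of the matter is producing a simplicial vertex, and for this I would prove the stronger statement (Dirac's lemma): a chordal graph that is not complete has two non-adjacent simplicial vertices, by induction on $|G|$. If $G$ is not complete, fix non-adjacent $a,b$ and an inclusion-minimal set $S$ separating them; let $A$ be the vertex set of the component of $G[V\setminus S]$ containing $a$ and $B$ the union of the remaining components, so $a\in A$ and $b\in B$. The key sublemma is that $S$ induces a complete graph: for distinct $x,y\in S$, minimality of $S$ forces each to have a neighbour in $A$ and a neighbour in $B$, so there is an $x$-$y$ path with all interior vertices in $A$ and another with all interior vertices in $B$; choosing shortest such paths and concatenating gives a cycle of length $\ge 4$ whose only possible chord is $\{x,y\}$ (no vertex of $A$ is adjacent to a vertex of $B$, and shortest paths have no chords), so chordality forces $\{x,y\}\in E$. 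Now $G[A\cup S]$ and $G[B\cup S]$ are proper induced subgraphs, hence chordal; applying the inductive hypothesis to each — and noting that, $S$ being a clique, at most one of the two non-adjacent simplicial vertices it produces can lie in $S$ — yields a simplicial vertex $v_A\in A$ of $G[A\cup S]$ and a simplicial vertex $v_B\in B$ of $G[B\cup S]$ (in the degenerate case where $G[A\cup S]$ is already complete, any vertex of $A$ works). Since every $G$-neighbour of a vertex of $A$ lies in $A\cup S$, such a $v_A$ is simplicial in $G$ as well, and similarly for $v_B$; finally $v_A\not\sim v_B$ because $A$ and $B$ meet different components of $G[V\setminus S]$.

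The main obstacle is Dirac's lemma, and within it the sublemma that minimal separators of a chordal graph are cliques — this is the only place where chordality is genuinely exploited, via the two-path cycle argument. Everything else is bookkeeping: checking that $G[A\cup S]$ and $G[B\cup S]$ are proper induced subgraphs (using $b\notin A\cup S$ and $a\notin B\cup S$), that neighbourhoods of $A$-vertices do not leak outside $A\cup S$, and dispatching the base cases $|G|\le 2$.
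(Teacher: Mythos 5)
Your proof is correct. Note that the paper does not prove this proposition at all---it is stated with a citation to Fulkerson--Gross as a classical fact---so there is no in-paper argument to compare against. What you have written is the standard proof: the easy direction via the least-indexed vertex of a chordless cycle, and the converse via Dirac's lemma (every non-complete chordal graph has two non-adjacent simplicial vertices), whose engine is the observation that inclusion-minimal separators in a chordal graph induce cliques. All the steps check out, including the handling of the degenerate case where $G[A\cup S]$ is complete and the verification that a vertex simplicial in $G[A\cup S]$ with all its $G$-neighbours inside $A\cup S$ is simplicial in $G$.
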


\begin{definition}
A finite graph $G = (V,E)$ will be called \emph{loop-chordal} if $\Loops(G) = V$ and $G' = (V,E \setminus \{\{v,v\}: v \in V\})$ (i.e. $G'$ is a version of $G$ without loops) is chordal.
\end{definition}

\begin{proposition}
\label{loop-elimination}
Given a loop-chordal graph $G = (V,E)$, there exists an ordering $v_1,\dots,v_n$ of $V$ such that $G[v_i \cup \{v_{i+1},\dots,v_n\} \cap \neig(v_i)]$ is a loop-complete graph, for every $1\leq i \leq n = |G|$.
\end{proposition}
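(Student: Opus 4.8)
The plan is to derive the statement directly from the Fulkerson--Gross characterization of chordal graphs (Proposition~\ref{elimination}), applied to the loopless version of $G$. Since $G$ is loop-chordal, by definition the graph $G'=(V,E\setminus\{\{v,v\}:v\in V\})$ is simple and chordal, so by Proposition~\ref{elimination} it admits a perfect elimination ordering $v_1,\dots,v_n$ of $V$; that is, for each $1\le i\le n$ the induced subgraph $G'[W_i]$ is complete, where $W_i:=\{v_i\}\cup\bigl(\{v_{i+1},\dots,v_n\}\cap\neig_{G'}(v_i)\bigr)$.

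I claim the very same ordering $v_1,\dots,v_n$ works for $G$. The first point is that the vertex set appearing in the proposition does not depend on whether neighbourhoods are computed in $G$ or in $G'$: the neighbourhoods of $v_i$ in $G$ and $G'$ differ only in that $\neig_G(v_i)$ contains $v_i$ while $\neig_{G'}(v_i)$ does not, and since $v_i\notin\{v_{i+1},\dots,v_n\}$ this difference disappears after intersecting with $\{v_{i+1},\dots,v_n\}$. Hence $\{v_i\}\cup\bigl(\{v_{i+1},\dots,v_n\}\cap\neig_G(v_i)\bigr)=W_i$. The second point is a comparison of induced subgraphs: since $E(G)=E(G')\cup\{\{v,v\}:v\in V\}$, restricting both sides to pairs of vertices lying in $W_i$ gives $E(G[W_i])=E(G'[W_i])\cup\{\{x,x\}:x\in W_i\}$.

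Combining the two observations: $G'[W_i]$ is complete, so every pair of distinct vertices of $W_i$ is adjacent in $G[W_i]$; and $\Loops(G)=V\supseteq W_i$, so every vertex of $W_i$ carries a loop in $G[W_i]$. Thus $x\sim y$ in $G[W_i]$ for all (not necessarily distinct) $x,y\in W_i$, i.e.\ $G[W_i]$ is loop-complete, for every $1\le i\le n$, which is exactly the claim. There is no real obstacle here; the only point that requires a moment's care is the bookkeeping with loops in the definition of $\neig$, which is dispatched by the first observation above.
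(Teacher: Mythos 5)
Your proof is correct and follows the same route as the paper, which simply states that the result follows immediately from Proposition~\ref{elimination}; you have just filled in the (routine) details of why the perfect elimination ordering of the loopless graph $G'$ works verbatim for $G$. No issues.
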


\begin{proof}
This follows immediately from Proposition \ref{elimination}.
\end{proof}

Proposition \ref{loop-elimination} can also be thought of as saying that a graph $G = (V,E)$ is loop-chordal iff $\Loops(G) = V$ and there exists an order $v_1 \prec \dots \prec v_n$ such that
\begin{equation}
v_i \precsim v_j \land v_i \precsim v_k \implies v_j \sim v_k.
\end{equation}

\begin{proposition}
A connected loop-chordal graph $G$ is dismantlable.
\end{proposition}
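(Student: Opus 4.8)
The plan is to prove that a connected loop-chordal graph $G$ is dismantlable by induction on $|G|$, using the perfect elimination ordering guaranteed by Proposition \ref{loop-elimination}. The base case $|G| = 1$ is immediate, since a single looped vertex is already reduced. For the inductive step, I would take the ordering $v_1 \prec \dots \prec v_n$ from Proposition \ref{loop-elimination}, so that $G[\{v_1\} \cup (\{v_2,\dots,v_n\} \cap \neig(v_1))]$ is loop-complete, and attempt to fold $v_1$ onto one of its neighbours.

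The key observation is that $v_1$ has a loop (since $\Loops(G) = V$ for a loop-chordal graph), and moreover, by the elimination property, the neighbourhood of $v_1$ induces a loop-complete graph together with $v_1$; in particular, all neighbours of $v_1$ are mutually adjacent and each has a loop. Since $G$ is connected and $|G| \geq 2$, the vertex $v_1$ has at least one neighbour $v_j$ (with $j > 1$, as the elimination ordering forces later-indexed neighbours). I claim $\neig(v_1) \subseteq \neig(v_j)$: indeed, any $w \in \neig(v_1)$ satisfies $w = v_1$ or $w \in \{v_2,\dots,v_n\}\cap\neig(v_1)$; in the former case $v_1 \sim v_j$ gives $v_1 \in \neig(v_j)$, and in the latter case $w$ and $v_j$ both lie in the loop-complete graph $G[\{v_1\}\cup(\{v_2,\dots,v_n\}\cap\neig(v_1))]$, so $w \sim v_j$. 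Hence $\neig(v_1) \subseteq \neig(v_j)$, and there is a fold $\alpha: G \to G[V \setminus \{v_1\}]$ with $\alpha(v_1) = v_j$.

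Next I would check that $G' := G[V \setminus \{v_1\}]$ is again connected and loop-chordal, so the inductive hypothesis applies. It is loop-chordal because $\Loops(G') = V \setminus \{v_1\}$ (removing a vertex does not destroy loops on the others) and the induced subgraph of a chordal graph is chordal (deleting a vertex cannot create a chordless cycle). For connectivity: any path in $G$ through $v_1$ can be rerouted through $v_j$ using the fact that the two neighbours of $v_1$ appearing on the path are both adjacent to $v_j$ (again by the loop-complete structure of $\neig_1(v_1)$); so $G'$ is connected. By induction $G'$ is dismantlable, i.e. there is a sequence of folds reducing $G'$ to a single vertex; prepending the fold $\alpha$ gives a sequence reducing $G$ to a single vertex. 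Since $v_j$ has a loop, the terminal vertex has a loop, so $G$ is (nontrivially) dismantlable.

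The main obstacle I anticipate is not conceptual but bookkeeping: one must be careful that the perfect elimination ordering interacts correctly with which vertex gets eliminated first versus the indexing convention, and that the ``smaller'' vertex in the order is indeed the one that can be folded away (its neighbourhood sits inside a clique, hence inside the neighbourhood of any other clique member). A secondary subtlety is verifying that $G'$ remains connected — this is where the completeness of $\neig_1(v_1)$ is essential and where a naive argument (just ``delete a vertex'') would fail in general graphs. Once these two points are nailed down, the induction closes cleanly.
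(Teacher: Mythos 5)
Your proof is correct and follows essentially the same route as the paper's: use the loop-perfect-elimination ordering to see that $\neig(v_1)$ induces a loop-complete graph, fold $v_1$ onto any other neighbour $v_j$ (the case $w=v_1$ being handled by the loop on $v_1$ and $v_1\sim v_j$), observe that the remaining graph is still connected and loop-chordal, and iterate. You are somewhat more careful than the paper, which leaves the preservation of connectivity and loop-chordality as "it can be checked"; your rerouting argument through $v_j$ is exactly the right way to fill that in.
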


\begin{proof}
Let $v_1,\dots,v_n$ be the ordering of $V$ given by Proposition \ref{loop-elimination} and take $v \in \neig(v_1)$. Clearly, $v \in \{v_2,\dots,v_n\}$ and then we have $G[v_1 \cup \{v_2,\dots,v_n\} \cap \neig(v_1)] = G[\neig(v_1)]$ is a loop-complete graph and $v \in \neig(v_1)$. Therefore, $\neig(v_1) \subseteq \neig(v)$ and there is a fold from $G$ to $G[V \setminus \{v_1\}]$. It can be checked that $G[V \setminus \{v_1\}]$ is also loop-chordal, so we apply the same argument to $G[V \setminus \{v_1\}]$ and so on, until we end with only one vertex (with a loop).
\end{proof}

\subsection{A chordal/tree decomposition}

We say that a constraint graph $\const = (\cVert,\cEdg)$ has a \emph{chordal/tree decomposition} or is \emph{chordal/tree decomposable} if we can write $\cVert = \mathrm{C} \sqcup \mathrm{T} \sqcup \mathrm{J}$ such that:
\begin{enumerate}
\item $\const[\mathrm{C}]$ is a nonempty loop-chordal graph,
\item $\mathrm{T} = \mathrm{T}_1 \sqcup \cdots \sqcup \mathrm{T}_m$ and, for every $1 \leq j \leq m$, $\const[\mathrm{T}_j]$ is a tree such that there exist unique vertices $r_j \in \mathrm{T}_j$ (the root of $\mathrm{T}_j$) and $c^j \in \mathrm{C}$ such that $\{r_j,c^j\} \in \cEdg$,
\item $\mathrm{J} = \mathrm{J}_1 \sqcup \cdots \sqcup \mathrm{J}_n$ and, for every $1 \leq k \leq n$, $\const[\mathrm{J}_k]$ is a connected graph such that there exists a unique vertex $c^k \in \mathrm{C}$ such that $\{u,c^k\} \in \cEdg$ for every $u \in \mathrm{J}_k$, and
\item $\cEdg[\mathrm{T} : \mathrm{J}] = \cEdg[\mathrm{T}_{j_1} :\mathrm{T}_{j_2}] = \cEdg[\mathrm{J}_{k_1} :\mathrm{J}_{k_2}] = \emptyset$, for every $j_1 \neq j_2$ and $k_1 \neq k_2$.
\end{enumerate}

Notice that, for every $k$, the vertex $s_k \in \mathrm{C}$ is a safe symbol for $\const[\{s_k\} \cup \mathrm{J}_k]$.

\subsection{A natural linear order}

Given a chordal/tree decomposable constraint graph $\const$, we define a linear order $\prec$ on $\cVert$ as follows:
\begin{itemize}
\item If $w \in \mathrm{J}$ and $t \in \mathrm{T}$, then $w \prec t$.
\item If $t \in \mathrm{T}$ and $c \in \mathrm{C}$, then $t \prec c$.
\item If $w \in \mathrm{J}_k$ and $w' \in \mathrm{J}_{k'}$, for some $1 \leq k < k' \leq n$, then $w \prec w'$.
\item If $t \in \mathrm{T}_j$ and $t' \in \mathrm{T}_{j'}$, for some $1 \leq j < j' \leq m$, then $t \prec t'$.
\item Given $1 \leq k \leq n$, we fix an arbitrary order in $\mathrm{J}_{k}$.
\item Given $1 \leq j \leq m$, if $t_1,t_2 \in \mathrm{T}_j$, then:
	\begin{itemize}
	\item if $\dist(t_1,r^j) < \dist(t_2,r^j)$, then $t_1 \prec t_2$,
	\item if $\dist(t_1,r^j) > \dist(t_2,r^j)$, then $t_2 \prec t_1$, and
	\item For each $i$, we arbitrarily order the set of vertices $t$ with $\dist(t,r^j) = i$.
	\end{itemize}
\item If $c_1,c_2 \in \mathrm{C}$, then $c_1$ and $c_2$ are ordered according to Proposition \ref{loop-elimination}.
\end{itemize}

\begin{proposition}
\label{decomp-dism}
If a constraint graph $\const$ has a chordal/tree decomposition, then $\const$ is dismantlable.
\end{proposition}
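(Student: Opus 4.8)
The plan is to dismantle $\const$ in three stages: first peel off the tree part $\mathrm{T}$, then the ``junk'' part $\mathrm{J}$, and finally apply to the surviving loop-chordal core $\const[\mathrm{C}]$ the earlier proposition stating that a connected loop-chordal graph is dismantlable. The two structural facts used throughout are that every vertex of $\mathrm{C}$ carries a loop (since $\const[\mathrm{C}]$ is loop-chordal, $\Loops(\const[\mathrm{C}]) = \mathrm{C}$), and that, by (4) together with the uniqueness clauses in (2) and (3), the only edges of $\const$ leaving a block $\mathrm{T}_j$ are the single edge $\{r_j,c^j\}$, while the only edges leaving a block $\mathrm{J}_k$ join its vertices to $c^k$.

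\emph{Stage 1: fold away the trees.} Fix $j$ and root $\const[\mathrm{T}_j]$ at $r_j$. By the remark above, no vertex of $\mathrm{T}_j$ has a loop, and for a leaf $\ell \neq r_j$ of the current subtree, with tree-parent $p$, one has $\neig(\ell) = \{p\}$. First I would fold $\ell$ into some neighbour of $p$: into the parent of $p$ if $p \neq r_j$, and into $c^j$ if $p = r_j$ (legitimate since $r_j \sim c^j$). Each such fold is valid because $\neig(\ell) = \{p\} \subseteq \neig(v)$, and it merely deletes $\ell$, so the remaining graph is again a tree rooted at $r_j$ attached to $\mathrm{C}$ only through $\{r_j,c^j\}$, and the induction continues. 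When $\mathrm{T}_j$ has shrunk to $\{r_j\}$ one has $\neig(r_j) = \{c^j\}$, so I fold $r_j$ into $c^j$, using the loop at $c^j$. Doing this for every $j$ removes all of $\mathrm{T}$ and leaves $\const[\mathrm{C} \sqcup \mathrm{J}]$ untouched otherwise.

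\emph{Stages 2 and 3: fold away the junk, then the core.} In $\const[\mathrm{C} \sqcup \mathrm{J}]$ fix $k$. By (3) and (4) together with the loop at $c^k$, in $\const$ one has $\neig(u) \subseteq \mathrm{J}_k \cup \{c^k\} \subseteq \neig(c^k)$ for every $u \in \mathrm{J}_k$; that is, $c^k$ is a safe symbol for $\const[\{c^k\} \cup \mathrm{J}_k]$, exactly as noted after the definition. Hence I fold the vertices of $\mathrm{J}_k$ into $c^k$ one at a time, the inclusion only shrinking as vertices are deleted; running over all $k$ leaves exactly $\const[\mathrm{C}]$. This graph is loop-chordal, and it is connected because each $\mathrm{T}_j$ and each $\mathrm{J}_k$ meets $\mathrm{C}$ in a single vertex and so cannot bridge two components of $\const[\mathrm{C}]$. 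The earlier proposition then reduces $\const[\mathrm{C}]$ to one looped vertex, and concatenating all the fold sequences of the three stages exhibits $\const$ as dismantlable.

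The step I expect to require the most care is Stage 1: since a tree has no loops, a leaf cannot be absorbed by its own parent, so it must be folded one level higher (or, at the root, into $c^j$), and one must check that deleting a leaf preserves the shape of the decomposition so the induction goes through. Both points follow immediately from (2) and (4). The remaining stages are essentially bookkeeping, the only genuine inputs being the loop at each vertex of $\mathrm{C}$ and the single-vertex attachment of $\mathrm{T}$ and $\mathrm{J}$ to the core.
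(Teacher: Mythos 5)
Your proof is correct, but it takes a genuinely different and considerably more direct route than the paper's. The paper deduces the proposition from machinery proved \emph{later}: Theorem \ref{chordal-tree} (chordal/tree decomposability gives $\homspace$ the UMC property for every board), then Proposition \ref{UMC-TSSM} (UMC implies TSSM), then strong irreducibility with a gap independent of $\board$, and finally dismantlability via the Brightwell--Winkler characterization in Proposition \ref{dism-charact}. You instead exhibit an explicit fold sequence: loopless leaves of each $\mathrm{T}_j$ fold one level up (or into $c^j$ at the root, where the loop on $c^j$ is what makes the last fold legal), each $\mathrm{J}_k$ folds vertex by vertex into its safe symbol $c^k$, and the surviving core $\const[\mathrm{C}]$ is handled by the earlier proposition that a connected loop-chordal graph is dismantlable. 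This is elementary, avoids the forward reference to Theorem \ref{chordal-tree}, and every inclusion $\neig(u) \subseteq \neig(v)$ you invoke does check out; your reading of conditions (2)--(4) --- that the only edges leaving $\mathrm{T}_j$ go through $\{r_j,c^j\}$ and the only edges leaving $\mathrm{J}_k$ go to $c^k$ --- is the one the paper itself relies on (e.g.\ in Case 1.a of Lemma \ref{fixed-point}). One point your approach makes visible that the paper's hides: the last stage needs $\const[\mathrm{C}]$ to be \emph{connected}, which the definition of a chordal/tree decomposition does not literally require (two disjoint looped vertices form a loop-chordal graph that is not dismantlable, so some connectivity hypothesis is genuinely necessary for the statement). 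You obtain connectedness of $\const[\mathrm{C}]$ from connectedness of $\const$, an implicit standing assumption the paper also needs (its own argument, via Case 1.c of Lemma \ref{fixed-point}, breaks for a disconnected core), so this is a shared convention rather than a gap in your argument --- but it would be worth stating the assumption explicitly.
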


\begin{proof}
W.l.o.g., suppose that $|\const| \geq 2$ (the case $|\const| = 1$ is trivial). Let $\board$ be an arbitrary board. In the following theorem (Theorem \ref{chordal-tree}), it will be proven that if $\const$ is chordal/tree decomposable, then $\homspace$ satisfies the UMC property with distance $|\const|-2$. Therefore, by Proposition \ref{UMC-TSSM}, $\homspace$ satisfies TSSM with gap $2(|\const|-2)+1$ and, in particular, $\homspace$ is strongly irreducible with gap $2|\const|+1$. Since the gap is independent of $\board$, we can apply Proposition \ref{dism-charact} to conclude that $\const$ must be dismantlable.
\end{proof}

\begin{figure}[ht]
\centering
\includegraphics[scale = 0.5]{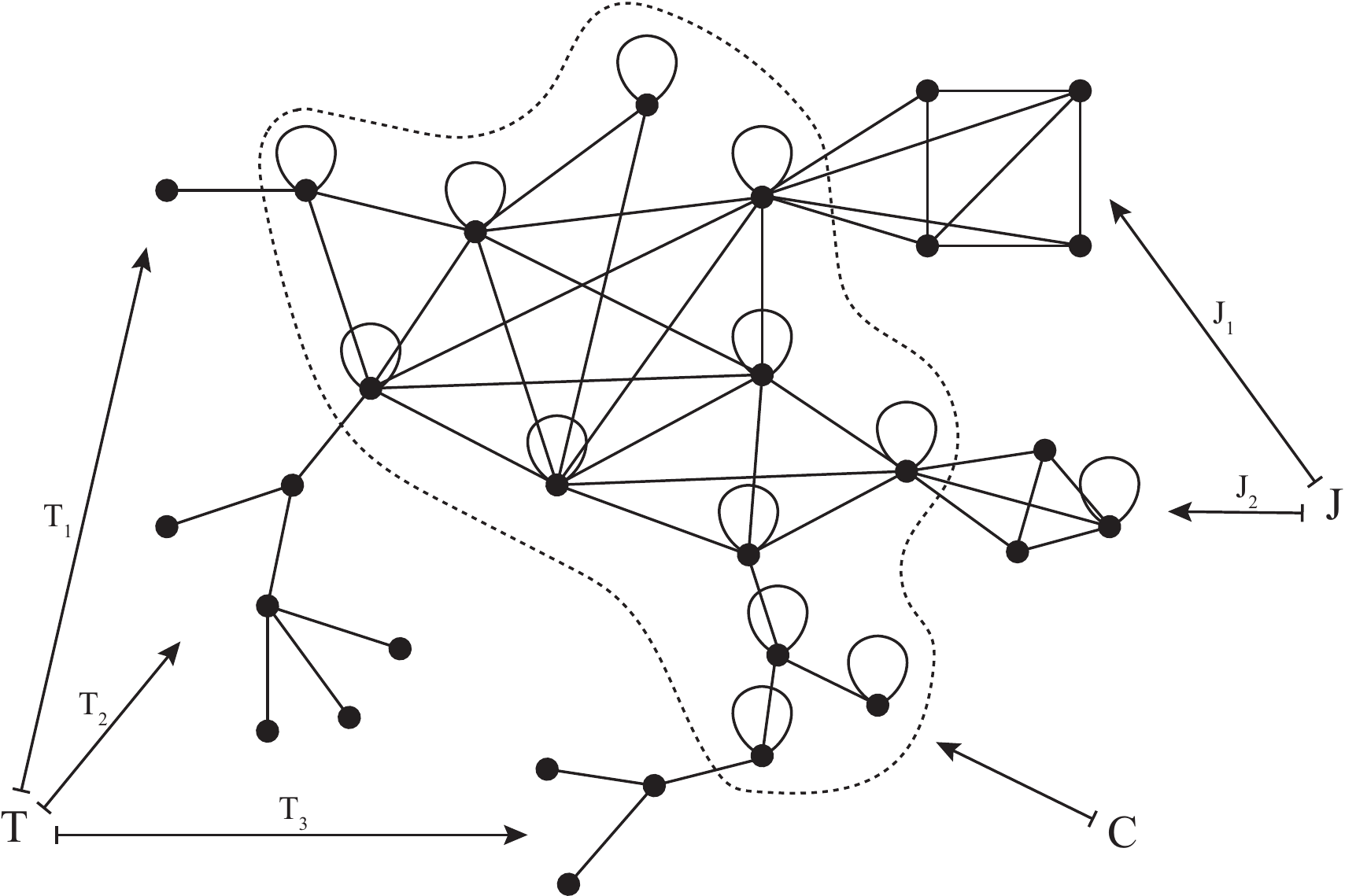} 
\caption{A chordal/tree decomposition.}
\label{decomposition}
\end{figure}

\begin{proposition}
\label{safe-decomp}
If a constraint graph $\const$ has a safe symbol, then $\const$ is chordal/tree decomposable.
\end{proposition}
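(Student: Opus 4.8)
The plan is to realize the decomposition explicitly, letting the safe symbol play the role of the entire chordal part and letting every other vertex fall into the ``$\mathrm{J}$'' part. Precisely, given a safe symbol $s \in \cVert$, I would set $\mathrm{C} := \{s\}$, $\mathrm{T} := \emptyset$ (so $m = 0$ and the tree part is vacuous), and $\mathrm{J} := \cVert \setminus \{s\}$, and then take $\mathrm{J}_1, \dots, \mathrm{J}_n$ to be the connected components of the induced subgraph $\const[\mathrm{J}]$ (with $n = 0$ in the degenerate case $\cVert = \{s\}$). This is manifestly a partition $\cVert = \mathrm{C} \sqcup \mathrm{T} \sqcup \mathrm{J}$, so it only remains to check the four defining conditions of a chordal/tree decomposition.

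For condition $(1)$: since $s$ is a safe symbol we have $\{s,s\} \in \cEdg$, so $\const[\mathrm{C}]$ is a single looped vertex; then $\Loops(\const[\mathrm{C}]) = \mathrm{C}$ and its loopless version is $\complete_1$, which has no cycles and is therefore chordal, so $\const[\mathrm{C}]$ is a nonempty loop-chordal graph. Condition $(2)$ is vacuous because $\mathrm{T} = \emptyset$. For condition $(3)$: each $\const[\mathrm{J}_k]$ is connected by construction, $\mathrm{C}$ has a single element so the uniqueness of $c^k$ is automatic, and $c^k := s$ satisfies $\{u,s\} \in \cEdg$ for every $u \in \mathrm{J}_k$ exactly because $s$ is safe. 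For condition $(4)$: $\cEdg[\mathrm{T}:\mathrm{J}] = \cEdg[\mathrm{T}_{j_1}:\mathrm{T}_{j_2}] = \emptyset$ hold trivially as $\mathrm{T} = \emptyset$, and $\cEdg[\mathrm{J}_{k_1}:\mathrm{J}_{k_2}] = \emptyset$ for $k_1 \neq k_2$ is precisely the statement that the $\mathrm{J}_k$ are \emph{distinct} connected components of $\const[\mathrm{J}]$ — which is exactly why the $\mathrm{J}_k$ are chosen to be the components rather than any coarser partition.

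I do not expect a genuine obstacle here: the entire argument is unwinding the definition of chordal/tree decomposition and observing that a safe symbol forces the ``$\mathrm{C}$-part'' to be a single looped vertex which absorbs every other vertex into the ``$\mathrm{J}$-part.'' The only points deserving a moment's care are the degenerate conventions ($m = 0$ allowed; a single vertex, looped or not, counting as connected under the paper's definition of connectedness) and the observation — already recorded in the text immediately after the definition of the decomposition — that each such $s$ is automatically a safe symbol for $\const[\{s\} \cup \mathrm{J}_k]$, so nothing extra needs to be verified there.
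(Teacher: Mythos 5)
Your proof is correct and uses exactly the decomposition the paper uses ($\mathrm{C} = \{s\}$, $\mathrm{T} = \emptyset$, $\mathrm{J} = \cVert \setminus \{s\}$); the paper simply declares this ``trivial'' while you spell out the verification of the four conditions, including the key observation that safety of $s$ forces $\{s,s\} \in \cEdg$ so that $\const[\{s\}]$ is loop-chordal. Nothing further is needed.
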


\begin{proof}
This follows trivially by considering $\mathrm{C} = \{s\}$, $\mathrm{T} = \emptyset$ and $\mathrm{J} = \cVert \setminus \{s\}$, with $s$ a safe symbol for $\const$.
\end{proof}

\subsection{UMC and chordal/tree decomposable graphs}

We show that chordal/tree decomposable graphs $\const$ induce combinatorial properties on homomorphism spaces $\homspace$.

\begin{theorem}
\label{chordal-tree}
If $\const$ is a chordal/tree decomposable constraint graph, then $\homspace$ has the UMC property with distance $|\const|-2$, for any board $\board$.
\end{theorem}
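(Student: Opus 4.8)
The plan is to verify conditions (M1) and (M2) of Definition \ref{umcp} directly for the natural linear order $\prec$ constructed above, using the decomposition $\cVert = \mathrm{C} \sqcup \mathrm{T} \sqcup \mathrm{J}$. First I would establish (M1): given $A \Subset \bVert$ and $\alpha \in \Leng_A(\homspace)$, I want a well-defined \emph{pointwise maximum} extension $\omega_\alpha$. The key observation is that the order $\prec$ is engineered so that the maximal admissible symbol at a vertex, given its neighbours, depends monotonically on those neighbours. Concretely, I would run a ``greedy push-up'' procedure: start from any $\omega \in [\alpha]^\board_\const$ and repeatedly replace $\omega(x)$, for $x \notin A$, by the $\prec$-largest symbol $v$ such that $v \sim_\const \omega(y)$ for all $y \sim_\board x$. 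To see this procedure converges to a genuine point and that the result is independent of the order of updates (hence truly maximal), I would prove a local lattice/monotonicity lemma: if $u_1, u_2 \in \cVert$ each have a common neighbour with every element of some set $W \subseteq \cVert$, then the $\prec$-maxima of $\{v : v \sim w \ \forall w \in W\}$ is a well-defined single vertex, and this ``best response'' map is monotone in $W$ under $\preccurlyeq$. This is where the three pieces interact: inside $\mathrm{C}$ this is exactly the loop-chordal property rewritten as $v_i \precsim v_j \wedge v_i \precsim v_k \Rightarrow v_j \sim v_k$ (Proposition \ref{loop-elimination}); the tree pieces $\mathrm{T}_j$ contribute a unique ``path toward the root'' that the greedy procedure climbs; and the $\mathrm{J}_k$ pieces sit below everything and have $s_k \in \mathrm{C}$ as a local safe symbol, so they never obstruct. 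Condition (4) of the decomposition (no edges between distinct $\mathrm{T}_j$'s, between distinct $\mathrm{J}_k$'s, or between $\mathrm{T}$ and $\mathrm{J}$) is what lets these analyses be carried out independently in each block.

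Next I would prove (M2), the bound $\Sigma_\bVert(\omega_{\alpha_1},\omega_{\alpha_2}) \subseteq \neig_{|\const|-2}(\Sigma_A(\alpha_1,\alpha_2))$. The idea is that the pointwise-maximum construction ``heals'' quickly: if $\alpha_1$ and $\alpha_2$ agree at a vertex $x \in A$, then locally the greedy push-up is fed the same data, so disagreements can only propagate from $\Sigma_A(\alpha_1,\alpha_2)$ outward along paths, and such a propagating disagreement must strictly climb through one of the blocks. I would argue that along any path of disagreement the symbols traversed form a $\prec$-monotone sequence within a single block (since once you leave $\mathrm{J}$ you cannot re-enter it, once you leave $\mathrm{T}_j$ you move to $\mathrm{C}$, and within $\mathrm{C}$ or within a tree $\mathrm{T}_j$ the disagreement can persist for only boundedly many steps). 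Counting: $|\mathrm{J}|$-many steps at most in $\mathrm{J}$, then the path enters $\mathrm{C} \cup \mathrm{T}$; inside a tree $\mathrm{T}_j$ a disagreement path has length at most $|\mathrm{T}_j|-1$ by the distance-to-root stratification of $\prec$ (a disagreement, once healed at the root side, cannot recur); inside $\mathrm{C}$ the strict $\prec$-climb gives at most $|\mathrm{C}|-1$ steps. Summing over the blocks actually used, the total is at most $|\const|-2$ (the ``$-2$'' accounts for the starting vertex already lying in $\Sigma_A$ and the fact that the two extreme blocks cannot both contribute their full length). I would make this precise with a careful case analysis on which block the path starts in and passes through.

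The main obstacle I anticipate is the well-definedness and monotonicity of the local ``best response'' map across the block structure — i.e., proving that the greedy push-up has a unique fixed point that dominates every admissible extension, simultaneously handling chordal vertices, tree vertices, and the junk vertices $\mathrm{J}_k$, and in particular checking that the procedure never gets ``stuck'' (always some admissible symbol exists to push up to, which is where the local safe symbols $s_k$ and the root adjacencies $\{r_j, c^j\}$ are essential). A secondary subtlety is making the constant in (M2) exactly $|\const|-2$ rather than something like $|\const|-1$ or $|\const|$; I would be prepared to tighten the path-length bookkeeping, using that the endpoint of a maximal disagreement path lies at a vertex that already agrees (so it does not count) and that $\Sigma_A(\alpha_1,\alpha_2)$ is nonempty (so at least one vertex is ``free''). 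Once (M1) and (M2) are in place, the theorem follows, and Propositions \ref{UMC-TSSM} and \ref{decomp-dism} are immediate consequences, as already indicated.
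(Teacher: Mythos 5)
Your high-level plan (greedily construct $\omega_\alpha$, then bound how far disagreements can propagate to get the radius $|\const|-2$) has the same shape as the paper's argument, but the lemma you place at its centre is false, and this is a genuine gap rather than a bookkeeping issue. Take $\const$ with $\cVert=\{1,2,3,4\}$, a loop at every vertex, and simple edges $\{1,4\},\{4,3\},\{3,2\}$: this is loop-chordal (the unlooped graph is a path, hence a tree), $1,2,3,4$ is a perfect elimination ordering, so $1\prec 2\prec 3\prec 4$ is an admissible choice of the natural order with $\mathrm{C}=\cVert$ and $\mathrm{T}=\mathrm{J}=\emptyset$. Then $\max \neig(1)=4$ while $\max \neig(2)=3$, so the ``best response'' map is not monotone in $W$ under $\preccurlyeq$. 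This is not a pathology you can order away: on $\board=\Z$ with $A=\{0\}$, $\alpha_1(0)=1$ and $\alpha_2(0)=2$, one finds $\omega_{\alpha_1}(\pm 1)=4\succ 3=\omega_{\alpha_2}(\pm 1)$ even though $\alpha_1\prec\alpha_2$; the maximal-configuration map itself is non-monotone, which also undercuts your (M2) heuristic that disagreements ``propagate monotonically outward, strictly climbing through a block.'' Since you explicitly identify the well-definedness and monotonicity of the local push-up as the main obstacle, and the lemma you propose to overcome it fails, the proposal does not yet constitute a proof.

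For contrast, the paper avoids any one-point or monotonicity statement. For (M1) it defines $\omega_\alpha$ by enumerating $\bVert\setminus A$ and giving each site the largest value over \emph{globally} admissible extensions (not the largest locally compatible symbol), and it proves order-independence and maximality by a two-point comparison device: for a triple $(\omega_1,\omega_2,x)$ with $\omega_1(x)\prec\omega_2(x)$, a map $\Pos$ moves to a neighbour at which one of the two points takes a strictly $\prec$-smaller value (possibly swapping the roles of $\omega_1$ and $\omega_2$), so after at most $|\const|-2$ steps one reaches a fixed point; Lemma \ref{fixed-point} then shows that at such a fixed point the smaller configuration can be strictly increased at that site, contradicting maximality. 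The same mechanism yields (M2): a disagreement site farther than $|\const|-2$ from $\Sigma_A(\alpha_1,\alpha_2)$ is carried by $\Pos^{|\const|-2}$ to a fixed point that is still a disagreement site outside $\Sigma_A(\alpha_1,\alpha_2)$, which is impossible. All of the case analysis on $\mathrm{C}$, $\mathrm{T}$ and $\mathrm{J}$ lives inside that two-point lemma, where the existence of the second, larger configuration is exactly what makes the local increase possible; that is the ingredient your sketch is missing.
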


Before proving Theorem \ref{chordal-tree}, we introduce some useful tools. From now on, we fix $\homspace$ and $x_1,x_2,\dots$ to be an arbitrary order of $\bVert$. We also fix the linear order $\prec$ on $\cVert$ as defined above. Given $i \in \{1,\dots,|\const|\}$, define the sets
\begin{equation}
\Dom_i := \left\{(\omega_1,\omega_2,x) \in \homspace \times \homspace \times \board: v_i = \omega_1(x) \prec \omega_2(x) \right\},
\end{equation}
and consider $\Dom(\ell) = \bigcup_{i=1}^{\ell} \Dom_i$, for $1 \leq \ell \leq |\const|$, and $\Dom := \Dom(|\const|)$. Notice that $\Dom_{|\const|} = \emptyset$. In addition, given $(\omega_1,\omega_2,x) \in \Dom$, define the set $\neig^{-}(\omega_1,x) := \{y \in \neig(x): \omega_1(y) \prec \omega_1(x)\}$ and the partition $\neig^{-}(\omega_1,x) = \neig^{-}_{\prec}(\omega_1,\omega_2,x) \sqcup \neig^{-}_{\succ}(\omega_1,\omega_2,x) \sqcup \neig^{-}_{=}(\omega_1,\omega_2,x)$, where
\begin{align}
\neig^{-}_{\prec}(\omega_1,\omega_2,x)	&	:= \{y \in \neig^{-}(\omega_1,x): \omega_1(y) \prec \omega_2(y)\},	\\
\neig^{-}_{\succ}(\omega_1,\omega_2,x)	&	:= \{y \in \neig^{-}(\omega_1,x): \omega_1(y) \succ \omega_2(y)\},	\\
\neig^{-}_{=}(\omega_1,\omega_2,x)		&	:= \{y \in \neig^{-}(\omega_1,x): \omega_1(y) = \omega_2(y)\}.
\end{align}

Let $\Pos: \Dom \to \Dom$ be the function that, given $(\omega_1,\omega_2,x) \in \Dom$, returns:
\begin{enumerate}
\item $(\omega_1,\omega_2,y)$, if $\neig^{-}_{\prec}(\omega_1,\omega_2,x) \neq \emptyset$ and $y$ is the minimal element in $\neig^{-}_{\prec}(\omega_1,\omega_2,x)$,
\item $(\omega_2,\omega_1,y)$, if $\neig^{-}_{\prec}(\omega_1,\omega_2,x) = \emptyset$, $\neig^{-}_{\succ}(\omega_1,\omega_2,x) \neq \emptyset$ and $y$ is the minimal element in $\neig^{-}_{\succ}(\omega_1,\omega_2,x)	$,
\item $(\omega_1,\omega_2,x)$, if $\neig^{-}_{\prec}(\omega_1,\omega_2,x) = \neig^{-}_{\succ}(\omega_1,\omega_2,x) = \emptyset$.
\end{enumerate}

Here the minimal elements $y$ are taken according to the previously fixed order of $\bVert$. We chose $y$ to be minimal just to have $\Pos$ well-defined; it will not be otherwise relevant. Notice that if $(\omega_1,\omega_2,x) \in \Dom_\ell$ and $\Pos(\omega_1,\omega_2,x) \neq (\omega_1,\omega_2,x)$, then $\Pos(\omega_1,\omega_2,x) \in \Dom(\ell-1)$. This implies that every element in $\Dom_1$ must be a fixed point. Moreover, for every $(\omega_1,\omega_2,x) \in \Dom$, the $(|\const|-2)$-iteration of $\Pos$ is a fixed point (though not necessarily in $D_1$), i.e. $\Pos\left(\Pos^{|\const|-2}(\omega_1,\omega_2,x)\right) = \Pos^{|\const|-2}(\omega_1,\omega_2,x)$.

We have the following lemma.

\begin{lemma}
\label{fixed-point}
Let $(\omega_1,\omega_2,x) \in D$ be such that $\Pos(\omega_1,\omega_2,x) = (\omega_1,\omega_2,x)$. Then, there exists $u \in \cVert$ such that $\omega_1(x) \prec u$ and the point $\tilde{\omega}_1$ defined as
\begin{equation}
\tilde{\omega}_1(y) =
\begin{cases}
u			&	y = x	,	\\
\omega_1(y)	&	y \neq x,
\end{cases}
\end{equation}
is globally admissible. In particular, $\omega_1 \prec \tilde{\omega}_1$.
\end{lemma}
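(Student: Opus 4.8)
The plan is to use the hypothesis that $(\omega_1,\omega_2,x)$ is a fixed point of $\Pos$, which by case (3) of the definition of $\Pos$ means $\neig^-_\prec(\omega_1,\omega_2,x) = \neig^-_\succ(\omega_1,\omega_2,x) = \emptyset$, i.e. every neighbour $y$ of $x$ with $\omega_1(y) \prec \omega_1(x)$ satisfies $\omega_1(y) = \omega_2(y)$. Since $(\omega_1,\omega_2,x) \in \Dom$, we have $\omega_1(x) = v_i \prec \omega_2(x)$ for some $i$; in particular $\omega_2(x)$ is a strict $\prec$-upper bound for $\omega_1(x)$ that is adjacent to every $\omega_1(y)$ with $y \sim x$ and $\omega_1(y) \succ \omega_1(x)$ (because $\{\omega_2(x),\omega_2(y)\} = \{\omega_2(x), \omega_1(y)\} \in \cEdg$ whenever $\omega_1(y) \succ \omega_1(x)$, using $\omega_1(y) = \omega_2(y)$ for $y$ in $\neig^-$ is the wrong direction — so instead I should look at the neighbours with $\omega_1(y) \succ \omega_1(x)$ and observe that, since $\omega_2$ is a homomorphism and $\omega_2(x) \succ \omega_1(x)$, those high neighbours' $\omega_1$-values are all adjacent to $\omega_2(x)$ provided $\omega_1(y) = \omega_2(y)$ there as well — which need not hold).

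So the cleaner approach: set $v := \omega_1(x)$ and consider the set $N_{\succ} := \{\omega_1(y) : y \sim x,\ \omega_1(y) \succ v\}$ and $N_{\le} := \{\omega_1(y): y\sim x,\ \omega_1(y)\preccurlyeq v\}$. The fixed-point hypothesis gives $\omega_1(y)=\omega_2(y)$ for all $y\sim x$ with $\omega_1(y)\prec v$; for $y$ with $\omega_1(y) = v$ nothing is claimed, and for $y$ with $\omega_1(y)\succ v$ nothing is claimed. I would then exploit the structure of the linear order $\prec$ coming from the chordal/tree decomposition: I want to produce $u \succ v$ adjacent to $v$ itself (so the loop-type move $v \mapsto u$ at a single site keeps $x$ consistent with its low neighbours, if $v$ is looped or $v\sim u$) and adjacent to every element of $N_{\succ}$. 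This is exactly the ``folding upward'' step: by the perfect-elimination / loop-complete structure (Proposition \ref{loop-elimination}) within $\mathrm C$, or by the tree/root structure within a $\mathrm T_j$ or $\mathrm J_k$, the set of $\prec$-successors of $v$ that are $\sim v$ has a maximum in the relevant sense, and any vertex $\succ v$ adjacent to $v$ dominates (in neighbourhood) all higher neighbours appearing at $x$. The key algebraic fact to extract is: if $v\in\cVert$ is not $\prec$-maximal among vertices reachable upward, there is $u \succ v$ with $v\sim u$ and $\neig(v)\cap\{w : w\succ v\} \subseteq \neig(u)$; combined with $\omega_1(y)=\omega_2(y)$ for the low neighbours and the fact that $\omega_2(x)\succ v$ witnesses that not everything adjacent to $v$ from above is ``blocked,'' this lets me verify that $\tilde\omega_1$ (equal to $\omega_1$ off $x$, equal to $u$ at $x$) is still a graph homomorphism: for each $y\sim x$ I must check $\{u,\omega_1(y)\}\in\cEdg$, which holds for low $y$ because then $\omega_1(y)\prec v$ and...

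Here is where I expect the real obstacle, so let me be precise about it. Checking $\{u,\omega_1(y)\}\in\cEdg$ for neighbours $y$ with $\omega_1(y)\prec v$ is NOT automatic from $u\succ v$; it needs that $u$ ``absorbs'' $v$ downward as well, i.e. $\neig(v)\subseteq\neig(u)$, not just on the upward side. This is precisely the dismantlability/fold structure ($\neig(v)\subseteq\neig(u)$), and the point of the chordal/tree order is that it is built so that going up one step in $\prec$ is a fold in reverse. So the main step is: \textbf{using the explicit order $\prec$, locate $u$.} In $\mathrm C$ (loop-chordal part): if $v\in\mathrm C$ and $v$ is not $\prec$-maximal in $\mathrm C$, the loop-completeness of $\const[v\cup(\{\text{later}\}\cap\neig(v))]$ gives that the immediate $\prec$-successor candidate works; if $v$ is $\prec$-maximal in $\mathrm C$ then... it is the global maximum $\omega_*$ and no such $u$ exists, but then $\omega_2(x)\succ v$ is impossible, contradicting $(\omega_1,\omega_2,x)\in\Dom$. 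In $\mathrm T_j$ (tree part): $v$ has a unique neighbour $u$ strictly closer to the root $r^j$ (or $u=c^j\in\mathrm C$ if $v=r_j$), and $u\succ v$ by construction of the order on $\mathrm T_j$; one checks $\neig(v)\subseteq\neig(u)$ is false in general for trees, so instead I must use that in the tree case the only neighbour of $v$ with smaller $\prec$-value is exactly its parent, and the only neighbours with larger value are its children; the fixed-point condition kills the ``parent disagreement,'' and adjacency of $u$ (=parent) to all children of $v$... also fails in a tree. So in the tree case the argument must instead be: the \emph{children} are the problematic neighbours, but in a homomorphism into a tree, if $y$ is a neighbour of $x$ with $\omega_1(y)$ a child of $v$ in $\const[\mathrm T_j]$, we can move $x$'s value up toward the root without breaking that edge only if the parent is adjacent to the child, which it is (parent–child edge exists), giving $\{u,\omega_1(y)\}\in\cEdg$ for children $y$; for the parent-direction neighbours the fixed-point hypothesis ensures $\omega_1(y)=\omega_2(y)$ and, since $\omega_2$ is a homomorphism with $\omega_2(x)\succ v$, one propagates. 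I would organize the proof by cases on whether $v\in\mathrm J$, $v\in\mathrm T$, or $v\in\mathrm C$, in each case producing $u$ from the decomposition data and verifying the single-site homomorphism check $\{u,\omega_1(y)\}\in\cEdg$ for all $y\sim x$ using (a) the fixed-point hypothesis for neighbours with $\omega_1(y)\prec v$, (b) the explicit edge structure of the component containing $v$ for neighbours with $\omega_1(y)\succcurlyeq v$. The conclusion $\omega_1\prec\tilde\omega_1$ is immediate since they differ only at $x$ and $\omega_1(x)=v\prec u=\tilde\omega_1(x)$. The hardest part, as indicated, is the case $v\in\mathrm T$, where ``moving up'' is a move toward the root and one must carefully use that the fixed-point condition has eliminated all disagreements on the parent side while child-side edges survive automatically; getting the interaction between the order $\prec$ on $\mathrm T_j$ and the tree metric to $r^j$ exactly right is the crux.
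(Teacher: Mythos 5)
Your proposal assembles most of the right ingredients (the fixed-point condition forces $\omega_1=\omega_2$ on the neighbours of $x$ carrying values below $\omega_1(x)$; the existence of $\omega_2(x)\succ\omega_1(x)$ witnesses that an upward move is possible; loop-chordality handles the core $\mathrm{C}$), but it does not close the tree case, and the device you propose for it is false. The ``key algebraic fact'' you want to extract --- a $u\succ v$ with $u\sim v$ and $\neig(v)\cap\{w:w\succ v\}\subseteq\neig(u)$ --- fails for $v\in\mathrm{T}_j$: the only neighbour of $v$ lying above it is its parent $f$, so $u$ would have to equal $f$, and $f\notin\neig(f)$ because $\mathrm{T}_j$ is loopless. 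Likewise the claim that replacing $v$ by its parent preserves edges to neighbours of $x$ carrying \emph{children} of $v$ (``the parent is adjacent to the child'') is wrong: in a tree the parent of $v$ and a child of $v$ are at distance two, hence non-adjacent. Your tree discussion is also internally inconsistent about the direction of $\prec$ on $\mathrm{T}_j$ (you assert both that the parent is $\succ v$ and that the parent is the unique neighbour with \emph{smaller} value); the order the argument needs is root-largest, so children sit below $v$ and the parent sits above.

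The missing organizing idea is to split first on whether $\neig^{-}(\omega_1,x)$ is empty, not only on which block of the decomposition contains $v=\omega_1(x)$. If $\neig^{-}(\omega_1,x)=\emptyset$, every neighbour of $x$ carries a value $\succcurlyeq v$; when $v\in\mathrm{T}_j$ (loopless) these values are strictly above $v$, hence all equal to the unique upper neighbour of $v$, namely its parent $f$ (or $c^j$). One then replaces $v$ by the \emph{grandparent} of $v$ (or by $c^j$), which is adjacent to $f$ even though it is not adjacent to $v$ --- this is why insisting on $u\sim v$ is a red herring. If instead $\neig^{-}(\omega_1,x)\neq\emptyset$, the fixed-point condition yields $y^*\sim x$ with $\omega_1(y^*)=\omega_2(y^*)\prec v$, and then $v\in\mathrm{T}$ is impossible: two distinct vertices $\omega_1(x)\prec\omega_2(x)$ both adjacent to $\omega_1(y^*)$ would force a cycle in $\mathrm{T}_j$ or a second attachment of $\mathrm{T}_j$ to $\mathrm{C}$. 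So $v\in\mathrm{C}$ (or the easy $\mathrm{J}$ situation), and one takes $u=\omega_2(x)$: the low neighbours are handled because $\omega_2$ is a homomorphism agreeing with $\omega_1$ there, and the neighbours with values $\succcurlyeq v$ are handled by loop-chordality ($v\precsim\omega_1(y)$ and $v\precsim\omega_2(x)$ give $\omega_1(y)\sim\omega_2(x)$). Without this split your plan cannot exclude the genuinely fatal configuration in which one neighbour of $x$ carries a child of $v$ and another carries its parent. (One point in your favour: the paper's written definition of $\prec$ on $\mathrm{T}_j$ makes vertices closer to the root smaller, which is the opposite of what its own proof uses; the intended order is root-largest.)
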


\begin{proof}
Notice that if $(\omega_1,\omega_2,x)$ is a fixed point, $\neig^{-}(\omega_1,x) = \neig^{-}_{=}(\omega_1,\omega_2,x)$. We have two cases:

\medskip
\noindent
{\bf Case 1: $\neig^{-}(\omega_1,x) = \emptyset$.} If this is the case, then $\omega_1(y) \succcurlyeq \omega_1(x)$, for all $y \in \neig(x)$. Notice that $\omega_1(x) \prec \omega_2(x) \preccurlyeq v_{|\const|}$, so $\omega_1(x) \prec v_{|\const|}$. Then, we have three sub-cases:

\medskip
\noindent
{\bf Case 1.a: $\omega_1(x) \in \mathrm{J}_k$ for some $1 \leq k \leq n$.} Since $\{v, c^k\} \in E$ for all $v \in \mathrm{J}_k$, we can modify $\omega_1$ at $x$ in a valid way by replacing $\omega_1(x) \in \mathrm{J}_k$ with $u = c^k$.

\medskip
\noindent
{\bf Case 1.b: $\omega_1(x) \in \mathrm{T}_j$ for some $1 \leq j \leq m$.} Since $\omega_1(y) \succcurlyeq \omega_1(x)$, for all $y \in \neig(x)$, but $\omega_1(x) \in \mathrm{T}_j$ and $\mathrm{T}_j$ does not have loops, we have $\omega_1(y) \succ \omega_1(x)$, for all $y \in \neig(x)$. Call $t = \omega_1(x)$. Then, there are three possibilities: $t = r^j$, $\dist(t,r^j) = 1$ or $\dist(t,r^j) > 1$. 

If $t = r^j$, then $\omega_1(y) = c^j$ for all $y \in \neig(x)$, where $c^j \succ r^j$ is the unique vertex in $\mathrm{C}$ connected with $r^j$. Since $c^j$ must have a loop, we can replace $\omega_1(x)$ by $u = c^j \succ \omega_1(x)$ in $\omega_1$.

If $\dist(t,r^j) = 1$, then $\omega_1(y) = r^j$ for all $y \in \neig(x)$, and, similarly to the previous case, we can replace $\omega_1(x)$ by $u = c^j \succ \omega_1(x)$ in $\omega_1$.

Finally, if $\dist(t,r^j) > 1$, then $\omega_1(y) = f > t$, for all $y \in \neig(x)$, where $f \in \mathrm{T}_j$ is the parent of $t$ in the $r^j$-rooted tree $\const[\mathrm{T}_j]$. Then, since $\dist(t,r^j) > 1$, there must exist $h \in \mathrm{T}_j$ that is the parent of $f$, so we can replace $\omega_1(x)$ by $u = h$ in $\omega_1$.

\medskip
\noindent
{\bf Case 1.c: $\omega_1(x) \in \mathrm{C}$.} If this is the case, and since $\omega_1(x) \prec v_{|\const|}$, there must exist $1 \leq i < |\mathrm{C}|$ such that $\omega_1(x) = c_i$ and $\neig(c_i) \cap \{c_{i+1},\dots,c_{|\mathrm{C}|}\}$ is nonempty. Now, $\omega_1(y) \succcurlyeq \omega_1(x)$, for all $y \in \neig(x)$, so $\omega_1(y) \in \{c_i\} \cup \left(\neig(c_i) \cap \{c_{i+1},\dots,c_{|\mathrm{C}|}\}\right)$, for all $y \in \neig(x)$. Since $\const[\{c_i\} \cup \left(\neig(c_i) \cap \{c_{i+1},\dots,c_{|\mathrm{C}|}\}\right)]$ is a loop-complete graph with two or more elements, then we can replace $\omega_1(x)$ by any element $u \in \neig(c_i) \cap \{c_{i+1},\dots,c_{|\mathrm{C}|}\}$ in $\omega_1$.

\medskip
\noindent
{\bf Case 2: $\neig^{-}(\omega_1,x) \neq \emptyset$.} In this case, $\omega_1(x) \prec \omega_2(x)$ and, since 
\begin{equation}
\neig^{-}(\omega_1,x) \neq \emptyset \text{ and } \neig^{-}(\omega_1,x) = \neig^{-}_{=}(\omega_1,\omega_2,x),
\end{equation}
there must exist $y^* \in \neig(x)$ such that $\omega_1(y^*) \prec \omega_1(x)$ and $\omega_1(y^*) = \omega_2(y^*)$. Notice that in this case, $\omega_1(x)$ cannot belong to $\mathrm{T}$, because $\omega_1(x) \prec \omega_2(x)$ and both are connected to $\omega_1(y^*)$; this would imply that, for some $1 \leq j \leq m$, either (a) $\mathrm{T}_j$ does not induce a tree, or (b) more than one vertex in $\mathrm{T}_j$ is adjacent to a vertex in $\mathrm{C}$. Therefore, we can assume that $\omega_1(x)$ belongs to $\mathrm{C}$ (and therefore, since $\omega_2(x) \succ \omega_1(x)$, also $\omega_2(x)$ belongs to $\mathrm{C}$).

We are going to prove that, for every $y \in \neig(x)$, we have $\omega_1(y) \sim \omega_2(x)$, so we can replace $\omega_1(x)$ by $\omega_2(x)$ in $\omega_1$. Since $\omega_1(y) = \omega_2(y)$, for every $y \in \neig^{-}(\omega_1,x)$, we only need to prove that $\omega_1(y) \sim \omega_2(x)$, for every $y \in \neig(x)$ such that $\omega_1(y) \succcurlyeq \omega_1(x)$.

Take any $y \in \neig^{-}_{=}(\omega_1,\omega_2,x) $. Then, $\omega_2(y) = \omega_1(y) \precsim \omega_1(x)$ and $\omega_2(y) \precsim \omega_2(x)$, so $\omega_1(x) \sim \omega_2(x)$, since $\const$ is loop-chordal. Consider now an arbitrary $y \in \neig(x)$ such that $\omega_1(y) \succeq \omega_1(x)$. If $\omega_1(y) = \omega_1(x)$, we have $\omega_1(y) = \omega_1(x) \sim \omega_2(x)$, so we can assume that $\omega_1(y) \succ \omega_1(x)$. Then, $\omega_1(x) \precsim \omega_1(y)$ and $\omega_1(x) \precsim \omega_2(x)$, so $\omega_1(y) \sim \omega_2(x)$, again by loop-chordality of $\const$. Then, $\omega_1(y) \sim \omega_2(x)$, for every $y \in \neig(x)$, and we can replace $\omega_1(x)$ by $u = \omega_2(x)$ in $\omega_1$, as desired.
\end{proof}

Now we are in a good position to prove Theorem \ref{chordal-tree}.

\begin{proof}[Proof of Theorem \ref{chordal-tree}]
Fix an arbitrary set $A \Subset \bVert$ and $\alpha \in \Leng_A(\homspace)$. We proceed to prove the conditions (M1) (i.e. existence and uniqueness of a maximal point $\omega_\alpha$) and $(M2)$.

\medskip
\noindent
{\bf Condition $(M1)$.} Choose an ordering $x_1,x_2,\dots$ of $\bVert \setminus A$ and, for $n \in \N$, define $A_n := A \cup \{x_1,\dots,x_n\}$. Let $\alpha_0 := \alpha$ and suppose that, for a given $n$ and all $0 < i \leq n$, we have already constructed a sequence $\alpha_i \in \Leng_{A_i}(\homspace)$ such that $\left.\alpha_{i}\right|_{A_{i-1}} = \alpha_{i-1}$ and $\beta(x_i) \preccurlyeq \alpha_{i}(x_i)$, for any $\beta \in \Leng_{A_i}(\homspace)$ such that $\left.\beta\right|_{A_{i-1}} = \alpha_{i-1}$.

Next, look for the globally admissible configuration $\alpha_{n+1}$ such that $\left.\alpha_{n+1}\right|_{A_n} = \alpha_n$ and $\beta(x_{n+1}) \preccurlyeq \alpha_{n+1}(x_{n+1})$, for any $\beta \in \Leng_{A_{n+1}}(\homspace)$ such that $\left.\beta\right|_{A_n} = \alpha_n$. Iterating and by compactness of $\cVert^\bVert$ (with the product topology), we conclude the existence of a unique point $\hat{\omega} \in \bigcap_{n \in \N} [\alpha_n]^\board_\const$.

We claim that $\hat{\omega}$ is independent of the ordering $x_1,x_2,\dots$ of $\bVert \setminus A$.

By contradiction, suppose that given two orderings of $\bVert \setminus A$ we can obtain two different configurations $\hat{\omega}_1$ and $\hat{\omega}_2$ with the properties described above. Take $\overline{x} \in \bVert \setminus A$ such that $\hat{\omega}_1(\overline{x}) \neq \hat{\omega}_2(\overline{x})$. W.l.o.g., suppose that $\hat{\omega}_1(\overline{x}) \prec \hat{\omega}_2(\overline{x})$. Then we have that $(\hat{\omega}_1,\hat{\omega}_2,\overline{x}) \in \Dom$ and $\Pos^{|\const|-2}(\hat{\omega}_1,\hat{\omega}_2,\overline{x})$ is a fixed point for $\Pos$. W.l.o.g., suppose that $\Pos^{|\const|-2}(\hat{\omega}_1,\hat{\omega}_2,\overline{x}) = (\hat{\omega}_1,\hat{\omega}_2,\tilde{x})$, where $\tilde{x} \in \bVert \setminus A$ (note that $\tilde{x}$ is not necessarily equal to $\overline{x}$). By an application of Lemma \ref{fixed-point}, $\hat{\omega}_1(\tilde{x})$ can be replaced in a valid way by a vertex $u \in \cVert$ such that $\hat{\omega}_1(\tilde{x}) \prec u$. If we let $n$ be such that $\tilde{x} = x_n$ for the ordering corresponding to $\hat{\omega}_1$, we have a contradiction with the maximality of $\hat{\omega}_1$, since we could have chosen $u$ instead of $\hat{\omega}_1(x_n)$ in the $n$th step of the construction of $\hat{\omega}_1$.

Therefore, there exists a particular $\hat{\omega}$ common to any ordering $x_1,x_2,\dots$ of $\bVert \setminus A$. We claim that taking $\omega_\alpha = \hat{\omega}$ proves $(M1)$. In fact, suppose that there exists an $\omega \in [\alpha]^\board_\const$ and $x^* \in \bVert \setminus A$ such that $\hat{\omega}(x^*) \prec \omega(x^*)$. We can always choose an ordering of $\bVert \setminus A$ such that $x_1 = x^*$. Then, according to such ordering, $\beta \preccurlyeq \left.\hat{\omega}\right|_{A_1}$ for any $\beta \in \Leng_{A_1}(\homspace)$ such that $\left.\beta\right|_{A} = \alpha$. In particular, if we take $\beta = \alpha\left.\omega\right|_{\{x^*\}}$, we have a contradiction.

\medskip
\noindent
{\bf Condition $(M2)$.} Notice that if $(\omega_1',\omega_2',y) = \Pos(\omega_1,\omega_2,x)$, then $x = y$ or $x \sim y$. In addition, since the $|\const|$-iteration of $\Pos$ is a fixed point, if $(\omega_1',\omega_2',y) = \Pos^{|\const|-2}(\omega_1,\omega_2,x)$, then $\dist(x,y) \leq |\const|-2$. In order to prove condition $(M2)$, consider two configurations $\alpha_1,\alpha_2 \in \Leng_{A}(\homspace)$ and the set $\Sigma_\bVert(\omega_{\alpha_1},\omega_{\alpha_2})$. We want to prove that
\begin{equation}
\Sigma_\bVert(\omega_{\alpha_1},\omega_{\alpha_2}) \subseteq \neig_{|\const|-2}(\Sigma_{A}(\alpha_1,\alpha_2)).
\end{equation}

W.l.o.g., suppose that $\alpha_1 \neq \alpha_2$ and take $x \in \Sigma_\bVert(\omega_{\alpha_1},\omega_{\alpha_2}) \neq \emptyset$. It suffices to check that $\dist(x,\Sigma_{A}(\alpha_1,\alpha_2)) \leq |\const|-2$. By contradiction, suppose $\dist(x,\Sigma_{A}(\alpha_1,\alpha_2)) > |\const|-2$ and let $(\omega_{\alpha_1}',\omega_{\alpha_2}',y) = \Pos^{|\const|-2}(\omega_{\alpha_1},\omega_{\alpha_2},x)$. Notice that, by definition of $\Pos$, $y$ also belongs to $\Sigma_\bVert(\omega_{\alpha_1},\omega_{\alpha_2})$, and since $\dist(x,y) \leq |\const|$, we have $y \notin \Sigma_{A}(\alpha_1,\alpha_2)$. Then, there are two possibilities: (a) $y \in A \setminus \Sigma_{A}(\alpha_1,\alpha_2)$, or (b) $y \in \bVert \setminus A$. 

If $y \in A \setminus \Sigma_{A}(\alpha_1,\alpha_2)$, then $\omega_{\alpha_1}(y) = \alpha_1(y) = \alpha_2(y) = \omega_{\alpha_2}(y)$, and that contradicts the fact that $y \in \Sigma_\bVert(\omega_{\alpha_1},\omega_{\alpha_2})$.

If $y \in \bVert \setminus A$ and, w.l.o.g., $\omega_{\alpha_1}(y) \prec \omega_{\alpha_2}(y)$, we can apply Lemma \ref{fixed-point} to contradict the maximality of $\omega_{\alpha_1}$.
\end{proof}


\section{Summary of implications}
\label{section7}

\begin{theorem}
\label{hierarchy}

We have the following implications:

\begin{itemize}
\item Let $\const$ be a constraint graph. Then,
\begin{align*}
\const \text{ has a safe symbol}	&	\iff		\homspace \text{ is SSF } \forall \board				\\
						&	\implies	\const \text{ is chordal/tree decomposable}			\\
						&	\implies	\homspace \text{ has the UMC property } \forall \board	\\
						&	\implies	\homspace \text{ satisfies TSSM } \forall \board			\\
						&	\implies	\homspace \text{ is strongly irreducible } \forall \board	\\
						&	\iff		\const \text{ is dismantlable}.
\end{align*}

\item Let $\const$ be a constraint graph and $\board$ a fixed board. Then,
\begin{align*}
\homspace \text{ has the UMC property}	&	\implies	\homspace \text{ satisfies TSSM}		\\
								&	\implies	\homspace \text{ is strongly irreducible}.
\end{align*}

\item Let $\const$ be a constraint graph and $\board$ a fixed board with bounded degree. Then,
\begin{align*}
\homspace \text{ has the UMC property}		\implies	&	\text{For all $\gamma > 0$, there exists a Gibbs}		\\
											&	\text{$(\board,\const,\Phi)$-specification that satisfies}	\\
											&	\text{exponential SSM with decay rate $\gamma$}		\\
											&	\hspace{2.25cm}\Downarrow						\\
\const \text{ is dismantlable}				\implies	&	\text{For all $\gamma > 0$, there exists a Gibbs}		\\
											&	\text{$(\board,\const,\Phi)$-specification that satisfies}	\\
											&	\text{exponential WSM with decay rate $\gamma$}.
\end{align*}

\end{itemize}
\end{theorem}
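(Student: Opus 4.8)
The plan is to assemble Theorem \ref{hierarchy} entirely out of results already established, so the proof is really a matter of pairing each arrow with the correct earlier statement and dispatching the two or three trivial ones by hand. I would organize it block by block, in the same order as the statement.

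For the first block (implications about a constraint graph $\const$): the equivalence ``$\const$ has a safe symbol $\iff$ $\homspace$ is SSF for every $\board$'' is Proposition \ref{safessf} together with the converse remark recorded immediately after its proof; ``safe symbol $\implies$ chordal/tree decomposable'' is Proposition \ref{safe-decomp}; ``chordal/tree decomposable $\implies$ $\homspace$ has the UMC property for every $\board$'' is Theorem \ref{chordal-tree} (which in fact gives distance $|\const|-2$); ``UMC $\implies$ TSSM'' is Proposition \ref{UMC-TSSM}, applied board by board; ``TSSM $\implies$ strongly irreducible'' is the $S=\emptyset$ specialization recorded in \eqref{TSSM-irred}; and ``strongly irreducible for every $\board$ $\iff$ $\const$ dismantlable'' is the equivalence of items (1) and (3) in Proposition \ref{dism-charact}. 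I would be a little careful that the universal quantifiers compose correctly: the chain from UMC to dismantlability passes through ``every $\board$'' at each step, and Proposition \ref{dism-charact}(3) only asks for \emph{some} gap for each board, so no uniformity across boards is needed.

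For the fixed-board block the arrows are a subset of the ones just used with the quantifier dropped: ``$\homspace$ has the UMC property $\implies$ TSSM'' is again Proposition \ref{UMC-TSSM} (UMC with distance $g$ yields TSSM with gap $2g+1$), and ``TSSM $\implies$ strongly irreducible'' is again the $S=\emptyset$ case. Here I should note that ``UMC property'' without a named distance means UMC with distance $g$ for some fixed $g\in\N$, so the $g$-dependent gap is well defined. For the fixed bounded-degree board block, ``UMC $\implies$ for all $\gamma>0$ there is a Gibbs $(\board,\const,\Phi)$-specification with exponential SSM of decay rate $\gamma$'' is Proposition \ref{UMC-highRate}, and ``$\const$ dismantlable $\implies$ for all $\gamma>0$ there is a Gibbs $(\board,\const,\Phi)$-specification with exponential WSM of decay rate $\gamma$'' is Proposition \ref{dism-highRate} (equivalently item (5) of Proposition \ref{dism-charact}). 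The only thing genuinely to argue is the vertical $\Downarrow$ between the two conclusions: a Gibbs $(\board,\const,\Phi)$-specification satisfying SSM with rate $f$ automatically satisfies WSM with the same rate $f$, because $\dist(B,\Sigma_{\partial A}(\omega_1,\omega_2))\geq\dist(B,\partial A)$ and $f$ is nonincreasing, so the very specification produced for the SSM conclusion already witnesses the WSM conclusion.

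I do not expect a real obstacle; the statement is a bookkeeping summary of Sections \ref{section4}--\ref{section6}. The ``hardest'' part is just the pedantry of matching quantifiers and of observing that the $\Downarrow$ is an implication between \emph{conclusions} (SSM specification $\Rightarrow$ WSM specification) rather than between the hypotheses, so that no further argument relating UMC to dismantlability on a single board is required.
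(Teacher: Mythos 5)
Your proposal is correct and matches the paper's own proof, which assembles the theorem from exactly the same ingredients: Propositions \ref{safessf} (with the converse remark following it), \ref{safe-decomp}, Theorem \ref{chordal-tree}, Proposition \ref{UMC-TSSM}, Equation \eqref{TSSM-irred}, Proposition \ref{dism-charact}, Proposition \ref{UMC-highRate}, and the observation that SSM implies WSM. Your extra care with the quantifiers and with reading the vertical $\Downarrow$ as an implication between conclusions is sound but does not change the argument.
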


\begin{proof}
The first chain of implications and equivalences follows from Proposition \ref{safessf}, Proposition \ref{safe-decomp}, Theorem \ref{chordal-tree}, Proposition \ref{UMC-TSSM}, Equation \ref{TSSM-irred} and Proposition \ref{dism-charact}. The second one, from Proposition \ref{UMC-TSSM} and Equation \ref{TSSM-irred}. The last one, from Proposition \ref{UMC-highRate}, Proposition \ref{dism-charact} and the fact that SSM always implies WSM. 
\end{proof}


\section{The looped tree case}
\label{section8}

A looped tree $T$ will be called trivial if $|T| = 1$ and nontrivial if $|T| \geq 2$. We proceed to define a family of graphs that will be useful in future proofs.

\begin{definition}
Given $n \in \N$, an \emph{$n$-barbell} will be the graph $B_n = \left(V(B_n),E(B_n)\right)$, where
\begin{equation}
 V(B_n) = \left\{0,1,\dots,n,n+1\right\}
\end{equation}
and
\begin{equation}
E(B_n) = \left\{\{0,0\},\{0,1\},\dots,\{n,n+1\},\{n+1,n+1\}\right\}.
\end{equation}
\end{definition}

Notice that a looped tree with a safe symbol must be an $n$-star with a loop at the central vertex, possibly along with other loops. The graph $\const_\varphi$ can be seen as a very particular case of a looped tree with a safe symbol. For more general looped trees, we have the next result.

\begin{figure}[ht]
\centering
\includegraphics[scale = 0.5]{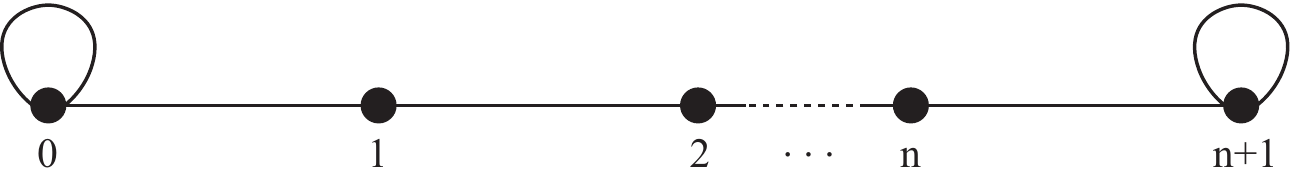} 
\caption{An $n$-barbell.}
\label{barbell}
\end{figure}

\begin{proposition}
\label{treesTFAE1}
Let $T$ be a finite nontrivial looped tree. Then, the following are equivalent:
\begin{enumerate}
\item[(1)] $T$ is chordal/tree decomposable.
\item[(2)] $T$ is dismantlable.
\item[(3)] $\Loops(T)$ is connected in $T$ and nonempty.
\end{enumerate}
\end{proposition}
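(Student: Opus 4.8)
The plan is to prove the cycle of implications $(1)\Rightarrow(2)\Rightarrow(3)\Rightarrow(1)$. The first, $(1)\Rightarrow(2)$, is not new work at all: it is precisely Proposition~\ref{decomp-dism}, which says every chordal/tree decomposable constraint graph is dismantlable.

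For $(3)\Rightarrow(1)$ I would construct a chordal/tree decomposition of $T$ explicitly. Take $\mathrm{C}:=\Loops(T)$, let $\mathrm{T}$ be the set of unlooped vertices of $T$ broken into its connected components $\mathrm{T}_1,\dots,\mathrm{T}_m$ inside $T$, and set $\mathrm{J}:=\emptyset$. Then $T[\mathrm{C}]$ is loop-chordal: all its vertices are looped, and forgetting the loops leaves a connected subgraph of a tree, which is again a tree and hence (vacuously) chordal; it is nonempty by hypothesis. Each $T[\mathrm{T}_j]$ is a loopless tree, being a connected induced subgraph of a tree. The one point requiring an argument is that, since $T$ is a tree \emph{and} $\mathrm{C}$ is connected, each $\mathrm{T}_j$ is joined to $\mathrm{C}$ by exactly one edge $\{r_j,c^j\}$ — two distinct such edges would close a cycle in $T$ through the path joining the two attachment vertices inside $\mathrm{T}_j$ and the path joining their two neighbours inside $\mathrm{C}$ — which supplies the root $r_j\in\mathrm{T}_j$ and its unique contact $c^j\in\mathrm{C}$. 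There are no edges among distinct $\mathrm{T}_j$ (they are distinct components of $T\setminus\mathrm{C}$), and the remaining requirements, all mentioning $\mathrm{J}=\emptyset$, are vacuous. Hence $T$ is chordal/tree decomposable.

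The real content is $(2)\Rightarrow(3)$. That $\Loops(T)\neq\emptyset$ is quick: $T$ is connected with $|T|\geq2$, so it is not a set of isolated loopless vertices, hence any dismantling must reduce it to a looped (persistent) vertex. For connectedness I would argue by contradiction, and this is where I expect the main difficulty. Suppose $\Loops(T)$ is nonempty but disconnected, and pick looped vertices $a,b$ in distinct components of $T[\Loops(T)]$ with $\dist(a,b)$ minimal. The geodesic $P\colon a=p_0,p_1,\dots,p_m=b$ in $T$ then satisfies $m\geq2$ (adjacent looped vertices would lie in one component), is induced (geodesics in trees are), and carries loops exactly at $p_0$ and $p_m$ (minimality kills loops at the internal vertices), so $T[\{p_0,\dots,p_m\}]$ is an induced copy of the barbell $B_{m-1}$. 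I then claim that no fold, in any dismantling of $T$, can delete a vertex of $P$; since a dismantling ends at a single vertex while $|P|=m+1\geq3$, this is the contradiction. The claim rests on the observation that every graph in a dismantling of $T$ is an induced subgraph of $T$ — folds only remove vertices and their incident edges, never creating an edge — so the adjacency among surviving vertices never changes: $P$ stays induced, no shortcut $\{p_{k-1},p_{k+1}\}$ ever appears, and looped vertices stay looped. Given that, deleting an internal $p_k$ by a fold onto some $v\neq p_k$ would require $v$ adjacent to both $p_{k-1}$ and $p_{k+1}$, impossible in a tree; and deleting $p_0$ (or $p_m$) by a fold onto some $v$ would, since $p_0\in\neig(p_0)\subseteq\neig(v)$, force $v$ to be a common neighbour of the adjacent pair $p_0,p_1$ — already impossible in a tree. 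Turning these local impossibilities into the uniform statement ``no vertex of $P$ is ever deleted'', together with the initial extraction of the right induced barbell, is the part that needs care; the rest is bookkeeping on top of results already established in the paper.
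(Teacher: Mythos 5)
Your proposal is correct and takes essentially the same route as the paper: $(1)\Rightarrow(2)$ via Proposition \ref{decomp-dism}, $(3)\Rightarrow(1)$ via the decomposition $\mathrm{C}=\Loops(T)$, $\mathrm{J}=\emptyset$, and $(2)\Rightarrow(3)$ by extracting an induced barbell from a hypothetical disconnection of $\Loops(T)$ and showing no fold can ever absorb one of its vertices because a tree has no cycles (the paper phrases this as ``consider the first barbell vertex to be folded,'' which is your induction in disguise). Your write-up is somewhat more careful than the paper's, notably in verifying the unique attachment edge of each unlooped component and in isolating a genuinely induced barbell via a minimal geodesic, but the underlying argument is the same.
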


\begin{proof}
We have the following implications.

\medskip
\noindent
{\bf $(1) \implies (2)$:} This follows from Theorem \ref{hierarchy}, which is for general constraint graphs.

\medskip
\noindent
{\bf $(2) \implies (3)$:} Assume that $T$ is dismantlable. First, suppose $\Loops(T) = \emptyset$. Then, in any sequence of foldings of $T$, in the next to last step, we must end with a graph consisting of just two adjacent vertices $v_{n-1}$ and $v_{n}$, without loops. However, this is a contradiction, because $\neig(u) \subsetneq \neig(v)$ and $\neig(v) \subsetneq \neig(u)$, so such graph cannot be folded into a single vertex. Therefore, $\Loops(T)$ is nonempty.

Next, suppose that $\Loops(T)$ is nonempty and not connected. Then, $T$ must have an $n$-barbell as a subgraph, for some $n \geq 1$. Therefore, in any sequence of foldings of $T$, there must have been a vertex in the $n$-barbell that was folded first. Let's call such vertex $u$ and take $v \in V$ with $\neig(u) \subseteq \neig(v)$. Then, $v$ is another vertex in the $n$-barbell or it belongs to the complement. Notice that $v$ cannot be in the $n$-barbell, because no neighbourhood of vertex in the $n$-barbell (even restricted to the barbell itself) contains the neighbourhood of another vertex in the $n$-barbell. On the other hand, $v$ cannot be in the complement of $n$-barbell, because $v$ would have to be connected to two or more vertices in the $n$-barbell ($u$ and its neighbours), and that would create a cycle in $T$. Therefore, $\Loops(T)$ is connected.

\medskip
\noindent
{\bf $(3) \implies (1)$:} Define $\mathrm{C} := \Loops(T)$. Then $\mathrm{C}$ is connected in $T$ and nonempty. Then, if we denote by $\mathrm{T}$ its complement $V \setminus \mathrm{C}$ and define $\mathrm{J} = \emptyset$, we have that $V$ can be partitioned into the three subsets $\mathrm{C} \sqcup \mathrm{T} \sqcup \mathrm {J}$, which corresponds to a chordal/tree decomposition. 
\end{proof}

\begin{corollary}
\label{treesTFAE2}
Let $T$ be a finite nontrivial looped tree. Then, the following are equivalent:
\begin{enumerate}
\item[(1)] $T$ is chordal/tree decomposable.
\item[(2)] $\mathrm{Hom}(\board,T)$ has the UMC property $\forall \board$.
\item[(3)] $\mathrm{Hom}(\board,T)$ satisfies TSSM $\forall \board$.
\item[(4)] $\mathrm{Hom}(\board,T)$ is strongly irreducible $\forall \board$.
\item[(5)] $T$ is dismantlable.
\end{enumerate}
\end{corollary}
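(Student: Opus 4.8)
The plan is to derive Corollary \ref{treesTFAE2} by combining Proposition \ref{treesTFAE1} with the general hierarchy of implications already established in Theorem \ref{hierarchy}. The strategy is to close a cycle of implications $(1) \implies (2) \implies (3) \implies (4) \implies (5) \implies (1)$, so that all five statements are equivalent. Since Proposition \ref{treesTFAE1} already gives $(1) \iff (5)$ for nontrivial looped trees, the bulk of the work is just assembling the chain $(1) \implies (2) \implies (3) \implies (4)$ from the general theory and then noting that $(4) \implies (5)$ follows from the characterization of dismantlability by strong irreducibility.

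Concretely, I would proceed as follows. First, $(1) \implies (2)$: if $T$ is chordal/tree decomposable, then by Theorem \ref{chordal-tree} (the chordal/tree case of the UMC theorem), $\mathrm{Hom}(\board,T)$ has the UMC property with distance $|T|-2$ for every board $\board$; in particular it has the UMC property for all $\board$. Second, $(2) \implies (3)$: by Proposition \ref{UMC-TSSM}, the UMC property with distance $g$ implies TSSM with gap $2g+1$, so if $\mathrm{Hom}(\board,T)$ has the UMC property for every $\board$, then it satisfies TSSM for every $\board$. Third, $(3) \implies (4)$: this is the implication TSSM $\implies$ strongly irreducible recorded in Equation \ref{TSSM-irred} (taking $S = \emptyset$ in the definition of TSSM), applied for each board. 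Fourth, $(4) \implies (5)$: this is exactly the equivalence $(3) \iff (1)$ of Proposition \ref{dism-charact} — $\mathrm{Hom}(\board,T)$ strongly irreducible with some gap for every board $\board$ is equivalent to $T$ being dismantlable. Finally, $(5) \implies (1)$ is precisely the implication $(2) \implies (1)$ (equivalently $(3) \implies (1)$) of Proposition \ref{treesTFAE1}, which closes the cycle.

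I do not expect a serious obstacle here, since every link is already available: the forward direction of the hierarchy is Theorem \ref{hierarchy}, and the one ``backwards'' step that does not follow from the general constraint-graph theory — namely that strong irreducibility (for all boards) of a looped-tree homomorphism space forces chordal/tree decomposability — is handled either through dismantlability (via Proposition \ref{dism-charact}) together with the tree-specific argument $(2)\implies(3)\implies(1)$ of Proposition \ref{treesTFAE1}. The only mild subtlety is keeping track of the ``$\forall \board$'' quantifier consistently through the chain and invoking the nontriviality hypothesis where Proposition \ref{treesTFAE1} requires it; this is bookkeeping rather than a genuine difficulty. If one wanted a self-contained statement, one could also remark that for looped trees no new examples separate these conditions, unlike the general constraint-graph setting where the implications in Theorem \ref{hierarchy} are strict.

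\begin{proof}
By Theorem \ref{hierarchy}, for any constraint graph we have the implications: chordal/tree decomposable $\implies$ UMC property $\forall \board$ $\implies$ TSSM $\forall \board$ $\implies$ strongly irreducible $\forall \board$ $\iff$ dismantlable. Applying this with the constraint graph equal to the finite nontrivial looped tree $T$ yields $(1) \implies (2) \implies (3) \implies (4) \implies (5)$. It remains to show $(5) \implies (1)$, but this is exactly the implication $(2) \implies (1)$ of Proposition \ref{treesTFAE1}. Hence all five statements are equivalent.
\end{proof}
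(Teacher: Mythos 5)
Your proof is correct and follows exactly the paper's own argument: the chain $(1)\implies(2)\implies(3)\implies(4)\implies(5)$ is read off from Theorem \ref{hierarchy}, and the cycle is closed by the tree-specific implication $(5)\implies(1)$ from Proposition \ref{treesTFAE1}. No issues.
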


\begin{proof}
By Theorem \ref{hierarchy}, we have $(1) \implies (2) \implies (3) \implies (4) \implies (5)$. The implication $(5) \implies (1)$ follows from Proposition \ref{treesTFAE1}.
\end{proof}

Sometimes, given a constraint graph $\const$, if a property for homomorphism spaces holds for a certain distinguished board or family of boards, then the property holds for any board $\board$. For example, this is proven in \cite{1-brightwell} for a dismantlable graph $\const$ and the strong irreducibility property, when $\board \in \{\tree_d\}_{d \in \N}$. The next result gives another example of this phenomenon.

\begin{proposition}
\label{tree-tssm}
Let $T$ be a finite looped tree. Then, the following are equivalent:
\begin{enumerate}
\item[(1)] $\mathrm{Hom}(\board,T)$ satisfies TSSM $\forall \board$.
\item[(2)] $\mathrm{Hom}(\Z^2,T)$ satisfies TSSM.	
\item[(3)] There exist Gibbs $(\Z^2,T,\Phi)$-specifications which satisfy exponential SSM with arbitrarily high decay rate.
\end{enumerate}
\end{proposition}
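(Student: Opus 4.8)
I would prove the cycle of implications $(1)\Rightarrow(3)\Rightarrow(2)\Rightarrow(1)$; the remaining direction $(1)\Rightarrow(2)$ is then automatic (and anyway is trivial, since $\Z^2$ is a board). Before anything, dispose of the trivial case $|T|=1$: if $T$ is a single looped vertex then $\mathrm{Hom}(\board,T)$ is a single point and all three statements hold trivially, while if $T$ is a single unlooped vertex then $\mathrm{Hom}(\board,T)=\emptyset$ for every board (which has an edge), and all three hold vacuously. So from now on assume $|T|\ge 2$, in particular $T$ is a \emph{nontrivial} looped tree and Corollary~\ref{treesTFAE2} applies.

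\textbf{$(1)\Rightarrow(3)$ and $(3)\Rightarrow(2)$.} If $\mathrm{Hom}(\board,T)$ satisfies TSSM for every board, then by Corollary~\ref{treesTFAE2} (or directly Theorem~\ref{chordal-tree}) $\mathrm{Hom}(\Z^2,T)$ has the UMC property; since $\Z^2$ has bounded degree $\Delta=4$, Proposition~\ref{UMC-highRate} then yields, for every $\gamma>0$, a Gibbs $(\Z^2,T,\Phi)$-specification with exponential SSM of decay rate $\gamma$ (take the interaction $\Phi^\prec_\lambda$ for $\lambda$ large). That is $(3)$. Conversely, given $(3)$, pick a Gibbs $(\Z^2,T,\Phi)$-specification with exponential SSM of decay rate $\gamma>4\log|T|$; Theorem~\ref{highrate} then gives TSSM for $\mathrm{Hom}(\Z^2,T)$, which is $(2)$.

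\textbf{$(2)\Rightarrow(1)$: reduction and the easy case.} This is the substantive direction. By Corollary~\ref{treesTFAE2} it suffices to deduce from ``$\mathrm{Hom}(\Z^2,T)$ satisfies TSSM'' that $T$ is chordal/tree decomposable, and by Proposition~\ref{treesTFAE1} this amounts, contrapositively, to: \emph{if $\Loops(T)$ is empty or is disconnected in $T$, then $\mathrm{Hom}(\Z^2,T)$ does not satisfy TSSM.} Since TSSM with any gap implies strong irreducibility with the same gap, it is enough to exhibit, for every gap $g$, a witness to the failure. If $\Loops(T)=\emptyset$, then $T$ is a simple tree, hence bipartite; being connected with $|T|\ge 2$ it has an edge $\{a,b\}$ with $a,b$ in different parts. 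For any $\omega\in\mathrm{Hom}(\Z^2,T)$, two sites of $\Z^2$ at even distance are mapped by $\omega$ into the same part of the bipartition of $T$ (a geodesic joining them has even length, so its image walk does too). Taking $A=\{x\}$, $B=\{y\}$ with $x,y$ in the same part of $\Z^2$ and $\dist(x,y)\ge g$, $\alpha(x)=a$, $\beta(y)=b$: each of $[\alpha]^\board_\const,[\beta]^\board_\const$ is nonempty (extend by the checkerboard colouring by $\{a,b\}$), but $[\alpha\beta]^\board_\const=\emptyset$, since it would force $a,b$ into the same part. So $\mathrm{Hom}(\Z^2,T)$ is not even strongly irreducible.

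\textbf{$(2)\Rightarrow(1)$: the hard case, and the main obstacle.} When $\Loops(T)$ is nonempty and disconnected, as in the proof of Proposition~\ref{treesTFAE1} one extracts an isometrically embedded barbell $B_n\subseteq T$: a geodesic $w_0w_1\cdots w_{n+1}$ in $T$ with $w_0,w_{n+1}\in\Loops(T)$ and $w_1,\dots,w_n\notin\Loops(T)$ (choosing $w_0,w_{n+1}$ to be a closest pair of looped vertices in distinct components of $T[\Loops(T)]$ forces all interior geodesic vertices to be loopless). The plan is to construct, for each prescribed gap $g$, finite disjoint $A,B,S\Subset\bVert$ with $\dist(A,B)\ge g$ and configurations $\alpha,\sigma,\beta$ with $[\alpha\sigma]^\board_\const,[\sigma\beta]^\board_\const\neq\emptyset$ but $[\alpha\sigma\beta]^\board_\const=\emptyset$, exploiting the tree-metric bounds $\dist_T(\omega(z),w_0)\le\dist(z,\omega^{-1}(w_0))$ (and the symmetric bound for $w_{n+1}$) together with the loop-freeness of $w_1,\dots,w_n$: morally, $\alpha$ and $\sigma$ jointly pin the ``height'' along a straight $\Z^2$-segment to a single loopless value $w_i$, which is impossible since neighbouring sites of that segment would carry equal loopless labels, while $\sigma$ is individually compatible with $\alpha$ and with $\beta$. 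The main obstacle is precisely arranging this: the pinching must be triggered only by all three of $\alpha,\sigma,\beta$ together, so one must route the $w_0$-region forced by $\alpha$ and the $w_{n+1}$-region forced by $\beta$ toward each other through $\sigma$ in a way that neither pair $\alpha\sigma$ nor $\sigma\beta$ detects, and one must do this while keeping $\dist(A,B)\ge g$ arbitrarily large; getting the geometry of $A$, $B$ and $S$ right so that a forced straight loopless strip appears only in $[\alpha\sigma\beta]^\board_\const$ is the delicate combinatorial step.
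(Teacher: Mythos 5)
Your overall architecture is sound and, apart from the order in which you chain the implications, matches the paper: the trivial-case reduction, $(1)\Rightarrow(3)$ via Corollary~\ref{treesTFAE2} and Proposition~\ref{UMC-highRate}, $(3)\Rightarrow(2)$ via Theorem~\ref{highrate} with $\gamma>4\log|T|$, the reduction of $(2)\Rightarrow(1)$ through Propositions~\ref{treesTFAE1} and Corollary~\ref{treesTFAE2}, and the bipartite/checkerboard obstruction when $\Loops(T)=\emptyset$ are all correct and are essentially the paper's argument. However, there is a genuine gap: in the case where $\Loops(T)$ is nonempty but disconnected you only state a \emph{plan} and explicitly defer ``the delicate combinatorial step'' of arranging $A$, $B$, $S$ so that the obstruction appears only in $[\alpha\sigma\beta]^\board_\const$. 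That step is the entire non-citational content of the proposition, so as written the proof of $(2)\Rightarrow(1)$ is incomplete. Moreover, the mechanism you gesture at (forcing a straight segment to carry a single loopless value $w_i$, contradicted by equal loopless labels on adjacent sites) is not quite the right obstruction and would be hard to trigger asymmetrically.

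The paper closes this case with an explicit ``channel'' construction (Figure~\ref{channel}) that you should be able to reconstruct. Extract the barbell $0,1,\dots,n+1$ with loops only at $0$ and $n+1$, as you did. Let $\sigma$ consist of two long horizontal rows of $\Z^2$: the top row constant equal to the looped vertex $0$ and the bottom row constant equal to the looped vertex $n+1$ (legal horizontally because of the loops), separated vertically so that every column of the strip between them must be a walk in $T$ from $0$ to $n+1$; since $T$ is a tree, every such walk runs along the barbell path, which is the rigidity you need. Then $\alpha$ is a vertical configuration capping the left end of the channel which realizes such a walk by repeating $0$ (using its loop) but not $n+1$, and $\beta$ caps the right end by repeating $n+1$ but not $0$. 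Each of $[\alpha\sigma]^{\Z^2}_{T}$ and $[\sigma\beta]^{\Z^2}_{T}$ is nonempty, but the two end-caps force alternating patterns of opposite parity propagating toward each other inside the channel, so $[\alpha\sigma\beta]^{\Z^2}_{T}=\emptyset$ whenever the horizontal separation has the appropriate parity; since the channel can be made arbitrarily long, TSSM fails for every gap $g$. Note the obstruction is a \emph{parity clash between two alternating barbell walks}, not a constant loopless strip; the asymmetry between $\alpha$ (parity adjusted at $0$) and $\beta$ (parity adjusted at $n+1$) is exactly what makes each pair individually consistent while the triple is not.
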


\begin{proof}
We have the following implications.

\medskip
\noindent
$(1) \implies (2)$: Trivial.

\medskip
\noindent
$(2) \implies (1)$: Let's suppose that $\mathrm{Hom}(\Z^2,T)$ satisfies TSSM. If $T$ is trivial, then $\mathrm{Hom}(\board,T)$ is a single point or empty, depending on whether the unique vertex in $T$ has a loop or not. In both cases, $\mathrm{Hom}(\board,T)$ satisfies TSSM $\forall \board$. If $T$ is nontrivial, then $\Loops(T)$ must be nonempty. To see this, by contradiction, first suppose that $T$ is nontrivial and $\Loops(T) = \emptyset$. Take an arbitrary vertex $u \in V(T)$ and a neighbour $v \in \neig(u)$. Notice that $\mathrm{Hom}(\Z^2,T)$ is nonempty, since the point $\omega_{u,v}$ defined as
\begin{equation}
\omega_{u,v}(x) = 
\begin{cases}
u	&	\mbox{ if } x_1 + x_2 = 0 \mod 2,	\\
v	&	\mbox{ if } x_1 + x_2 = 1 \mod 2,
\end{cases}
\end{equation}
is globally admissible. Now, if we interchange the roles of $u$ and $v$, and consider the (globally admissible) point $\omega_{v,u}$, we have $\left.\omega_{u,v}\right|_{(0,0)} = u$ and $\left.\omega_{v,u}\right|_{(2g+1,0)} = u$, for an arbitrary $g \in \N$. However, if this is the case, $\mathrm{Hom}(\Z^2,T)$ cannot be strongly irreducible with gap $g$, for any $g$ (and therefore, cannot be TSSM), because
\begin{equation}
\left[\left.\omega_{u,v}\right|_{(0,0)}\left.\omega_{v,u}\right|_{(2g+1,0)}\right]^{\Z^2}_{T} = \emptyset.
\end{equation}

A way to check this is by considering the fact that both $T$ and $\Z^2$ are bipartite graphs. Therefore, we can assume that $\Loops(T) \neq \emptyset$.

Now, suppose that $\Loops(T) \neq \emptyset$ and $\Loops(T)$ is not connected in $T$. If this is the case, $T$ must have an $n$-barbell as an induced subgraph, for some $n \geq 1$. Then, we would be able to construct configurations in $\Leng(\mathrm{Hom}(\Z^2,T))$ as shown in Figure \ref{channel}. Note that vertices in the barbell can reach each other only through the path determined by the barbell, since $T$ does not contain cycles. In Figure \ref{channel} are represented the cylinder sets $[\alpha\sigma]^{\Z^2}_{T}$ (top left), $[\sigma\beta]^{\Z^2}_{T}$ (top right) and $[\alpha\sigma\beta]^{\Z^2}_{T}$ (bottom), where:
\begin{enumerate}
\item $\alpha$ is the vertical left-hand side configuration in red, representing a sequence of nodes in the $n$-barbell that repeats $0$ but not $n+1$,
\item $\beta$ is the vertical right-hand side configuration in red, representing a sequence of nodes in the $n$-barbell that repeats $n+1$ but not $0$, and
\item $\sigma$ is the horizontal (top and bottom) configuration in black, representing loops on the vertices $0$ and $n+1$, respectively.
\end{enumerate}

It can be checked that $[\alpha\sigma]^{\Z^2}_{T}$ and $[\sigma\beta]^{\Z^2}_{T}$ are nonempty. However, the cylinder set $[\alpha\sigma\beta]^{\Z^2}_{T}$ is empty for every even separation distance between $\alpha$ and $\beta$, since $\alpha$ and $\beta$ force incompatible alternating configurations inside the ``channel'' determined by $\sigma$. Therefore, $\mathrm{Hom}(\Z^2,T)$ cannot satisfy TSSM, which is a contradiction.

We conclude that $\Loops(T)$ is nonempty and connected in $T$, and by Proposition \ref{treesTFAE1} $T$ is chordal/tree decomposable. Finally, by Proposition \ref{treesTFAE2}, we conclude that $\mathrm{Hom}(\board,T)$ satisfies TSSM $\forall \board$.

\medskip
\noindent
$(3) \implies (2)$: This follows by Theorem \ref{highrate}.

\medskip
\noindent
$(1) \implies (3)$: Since $\mathrm{Hom}(\board,T)$ satisfies TSSM $\forall \board$, $\mathrm{Hom}(\board,T)$ has the UMC property $\forall \board$ (see Corollary \ref{treesTFAE2}). In particular, $\mathrm{Hom}(\Z^2,T)$ has the UMC property. Then, (3) follows from Proposition \ref{UMC-highRate}.
\end{proof}

\begin{figure}[ht]
\centering
\includegraphics[scale = 0.825]{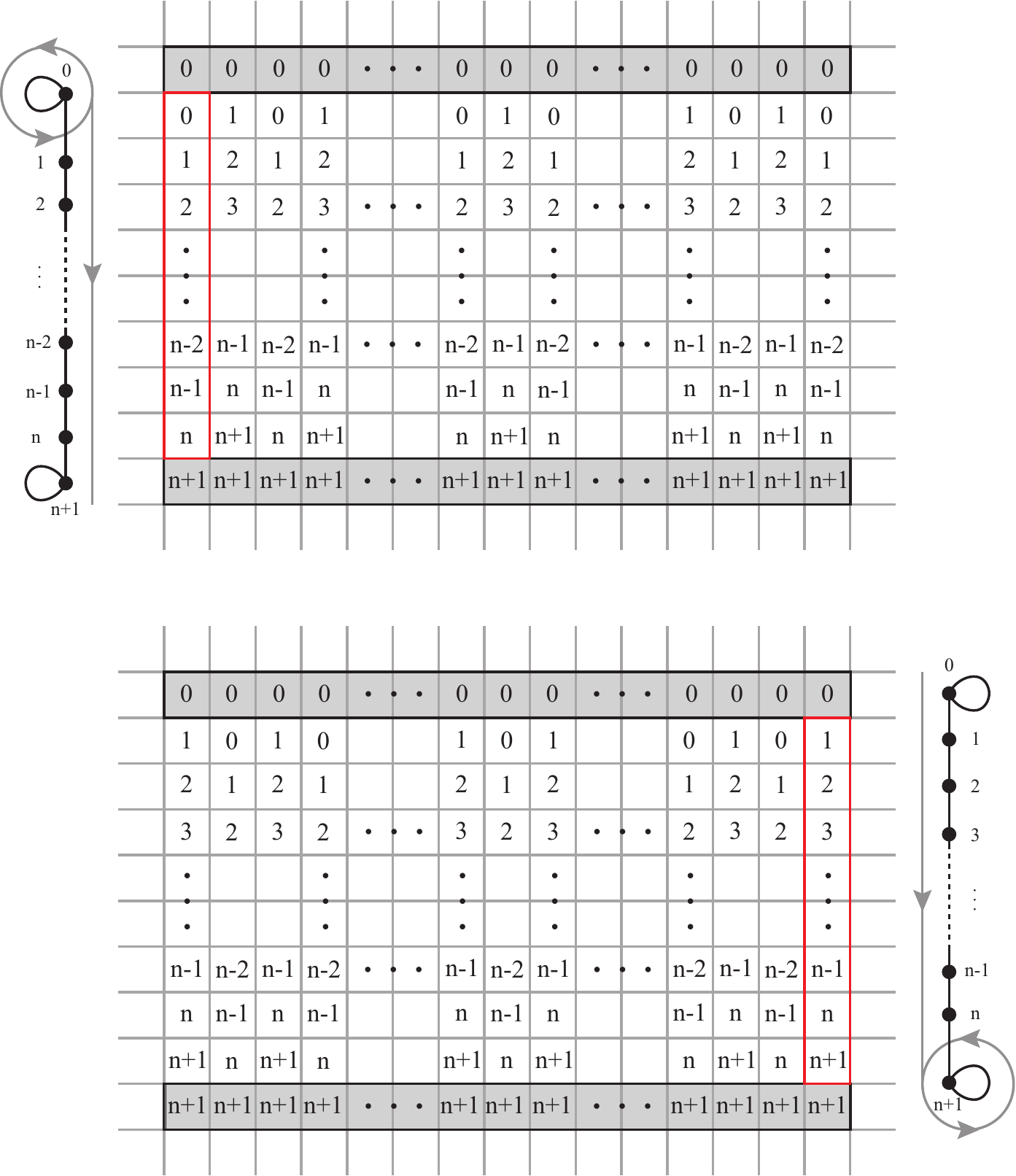} 
\caption{A ``channel'' in $\mathrm{Hom}(\Z^2,T)$ with two incompatible extremes $\alpha$ and $\beta$ (both in red).}
\label{channel}
\end{figure}


\section{Examples}
\label{section9}

\begin{proposition}
There exists a homomorphism space $\homspace$ that satisfies TSSM but not the UMC property.
\end{proposition}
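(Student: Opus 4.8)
The plan is to take the concrete homomorphism space $\mathrm{Hom}(\Z^2,\complete_q)$ of proper $q$-colourings of the square lattice, with $q \geq 5$, and show that it satisfies TSSM but not the UMC property. The TSSM half is immediate from the machinery already in place: by Example~\ref{exmpC4}, $\mathrm{Hom}(\Z^2,\complete_q)$ satisfies SSF for all $q \geq 5$, and by Proposition~\ref{ssftssm} any homomorphism space satisfying SSF also satisfies TSSM (with gap $g = 2$). So the only real content is to rule out UMC.

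To rule out UMC I would argue by contradiction: suppose $\mathrm{Hom}(\Z^2,\complete_q)$ has the UMC property with some distance $g$ and with respect to \emph{some} linear order $\prec$ on $\cVert(\complete_q)$, and let $v_q$ be the $\prec$-maximal vertex. As noted immediately after Definition~\ref{umcp} (apply $(M1)$ with $A = \emptyset$, equivalently $(M2)$ with $B = \emptyset$), there must then exist a greatest point $\omega_* \in \mathrm{Hom}(\Z^2,\complete_q)$, i.e. one with $\omega \preccurlyeq \omega_*$ for every $\omega \in \mathrm{Hom}(\Z^2,\complete_q)$; in particular $\omega_*(x) \succcurlyeq \omega(x)$ for every $x \in \Z^2$ and every proper colouring $\omega$. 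Now fix $x \in \Z^2$ and pick any colour $v_1 \neq v_q$; the checkerboard colouring assigning $v_q$ to the parity class of $x$ and $v_1$ to the other class is a genuine element of $\mathrm{Hom}(\Z^2,\complete_q)$ taking the value $v_q$ at $x$, so $\omega_*(x) \succcurlyeq v_q$ and hence $\omega_*(x) = v_q$. Since $x$ was arbitrary, $\omega_*$ is the constant map equal to $v_q$. But $\complete_q$ is loopless, so the constant map is not a graph homomorphism from $\Z^2$ to $\complete_q$ (any edge of $\Z^2$ would be sent to $\{v_q,v_q\}\notin\cEdg(\complete_q)$), contradicting $\omega_* \in \mathrm{Hom}(\Z^2,\complete_q)$. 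Hence no linear order witnesses UMC, and $\mathrm{Hom}(\Z^2,\complete_q)$ does not satisfy the UMC property.

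There is no genuine obstacle here: the example is essentially forced. UMC demands a coordinatewise-maximal point, which a loopless constraint graph such as $\complete_q$ can never provide over a board containing an edge, whereas SSF (and hence TSSM) is a purely local fillability property, insensitive to the absence of loops once $q$ is large enough. The same argument works verbatim for $\mathrm{Hom}(\Z^d,\complete_q)$ with $q \geq 2d+1$, and indeed for any board $\board$ and any loopless $\const$ for which $\mathrm{Hom}(\board,\const)$ happens to satisfy SSF. The only subtlety worth spelling out carefully in the write-up is that "UMC fails" must be established against \emph{every} choice of linear order on $\cVert$, which is precisely why it is cleanest to route the contradiction through the order-independent fact that UMC forces a greatest point, rather than through the combinatorics of any particular order.
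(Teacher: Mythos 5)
Your proposal is correct and takes essentially the same approach as the paper: the same example $\mathrm{Hom}(\Z^2,\complete_q)$ (the paper uses $q=5$), the same TSSM half via SSF, and the same key observation that UMC forces a greatest point $\omega_*$, which cannot exist for proper colourings. The only difference is the final contradiction — you use checkerboard colourings to force $\omega_*$ to be the constant maximal colour (impossible since $\complete_q$ is loopless), while the paper notes that some adjacent pair must satisfy $\omega_*(\overline{x}) \prec \omega_*(\overline{x}+(1,0))$ and then shifts $\omega_*$ to beat it at $\overline{x}$; both closings are valid.
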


\begin{proof}
The homomorphism space $\mathrm{Hom}(\Z^2,\complete_5)$ satisfies TSSM (in fact, it satisfies SSF) but not the UMC property. If $\mathrm{Hom}(\Z^2,\complete_5)$ satisfies the UMC property, then there must exist an order $\prec$ and a greatest element $\omega_* \in \mathrm{Hom}(\Z^2,\complete_5)$ according to such order (see Section \ref{section5}). Denote $V(\complete_5) = \{1,2,\dots,5\}$ and, w.l.o.g., assume that $1 \prec 2 \prec \cdots \prec 5$. Then, because of the constraints imposed by $\complete_5$, there must exist $\overline{x} \in \Z^2$ such that $\omega_*(\overline{x}) \prec \omega_*(\overline{x} + (1,0))$. Now, consider the point $\tilde{\omega}$ such that $\tilde{\omega}(x) = \omega_*(x+(1,0))$ for every $x \in \Z^2$, i.e. a shifted version of $\omega_*$ (in particular, $\tilde{\omega}$ also belongs to $\mathrm{Hom}(\Z^2,\complete_5)$). Then $\omega(\overline{x}) \prec \omega_*(\overline{x} + (1,0)) = \tilde{\omega}(\overline{x})$, which contradicts the maximality of $\omega_*$.
\end{proof}

\begin{note}
We are not aware of a homomorphism space $\homspace$ that satisfies the UMC property with $\const$ not a chordal/tree decomposable graph. 
\end{note}

\begin{figure}[ht]
\centering
\includegraphics[scale = 0.7]{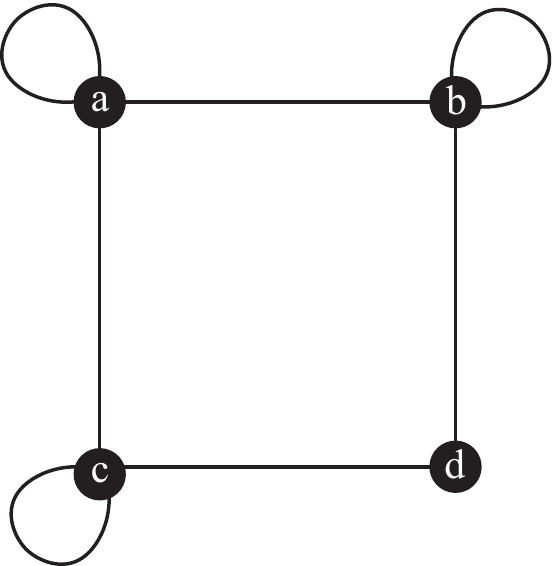} 
\caption{A dismantlable graph $\const$ such that $\mathrm{Hom}(\Z^2,\const)$ is not TSSM.}
\label{counterexample}
\end{figure}

\begin{proposition}
There exists a dismantlable graph $\const$ such that:
\begin{enumerate}
\item $\mathrm{Hom}(\Z^2,\const)$ does not satisfy TSSM.
\item There is no n.n. interaction $\Phi$ such that $(\Z^2,\const,\Phi)$ satisfies SSM.
\item For all $\gamma > 0$, there exists a n.n. interaction $\Phi$ such that the Gibbs $(\board,\const,\Phi)$-specification satisfies exponential WSM with decay rate $\gamma$.
\end{enumerate}
\end{proposition}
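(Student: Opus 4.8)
The plan is to work with the specific graph $\const$ drawn in Figure~\ref{counterexample} and establish the three items essentially separately, the bulk of the work being a ``channel'' construction in the spirit of the barbell argument of Section~\ref{section8}. Item~(3) is the cheapest: I would first check by hand that $\const$ is dismantlable, exhibiting an explicit sequence of folds collapsing it to a single looped vertex (each fold absorbing one vertex into a neighbour whose neighbourhood contains its own), and then simply invoke Proposition~\ref{dism-highRate} for the bounded-degree board $\Z^2$ ($\Delta=4$): for any prescribed $\gamma>0$ the interaction $\Phi_\lambda$ with $\lambda$ large enough gives a Gibbs $(\Z^2,\const,\Phi_\lambda)$-specification satisfying exponential WSM with decay rate $\gamma$.

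For item~(1), the graph $\const$ will be chosen to contain an induced barbell $B_n$ (so its only loops sit at the two ends, with no loops and no shortcuts among the interior vertices), plus a few extra vertices whose presence makes $\const$ dismantlable but which can be ``sealed off'' by a suitable finite configuration $\sigma$. For each $g\in\N$ I would build finite configurations $\alpha$ on a small set $P$, $\beta$ on a small set $Q$ with $\dist(P,Q)\ge g$, and $\sigma$ on a set $S$ forming two parallel ``walls'' of pinned symbols bounding a thin corridor joining $P$ to $Q$, such that $[\alpha\sigma]^{\Z^2}_{\const}$ and $[\sigma\beta]^{\Z^2}_{\const}$ are nonempty but $[\alpha\sigma\beta]^{\Z^2}_{\const}=\emptyset$: the walls $\sigma$ trap the corridor inside the barbell part of $\const$ and force it into a rigid ``$0$-type'' configuration near $P$ and a rigid ``$(n+1)$-type'' configuration near $Q$, and these cannot meet because there is no loop at any interior barbell vertex. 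Since $g$ is arbitrary, $\mathrm{Hom}(\Z^2,\const)$ is not TSSM. The substantive point here is the corridor rigidity, i.e.\ that $\sigma$ genuinely blocks every detour through the extra vertices and that the corridor admits only the two ``types''; this is a finite case analysis on walks in the induced subgraphs $\const[\neig(v)]$.

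For item~(2) I would argue directly that this obstruction is rate-independent, rather than appeal to Theorem~\ref{highrate}, which only rules out SSM faster than $4\log|\const|$. Suppose $\pi$ is a Gibbs $(\Z^2,\const,\Phi)$-specification satisfying SSM with some rate $f$. Using the corridor of item~(1), for each $n$ I would take $A_n$ to be the corridor interior together with one end chamber, arranged so that the walls $S$ and the far end lie on $\partial A_n$, and pick global points $\omega_1^{(n)},\omega_2^{(n)}\in\mathrm{Hom}(\Z^2,\const)$ agreeing on all of $\partial A_n$ except a bounded set $\Sigma$ at the far end (so $\Sigma_{\partial A_n}(\omega_1^{(n)},\omega_2^{(n)})\subseteq\Sigma$ and $\dist(B,\Sigma)\ge n$ for a fixed-size window $B$ at the near end), where $\omega_1^{(n)}$ carries $\sigma$ on the walls and a ``$0$-type'' condition at the far end while $\omega_2^{(n)}$ carries $\sigma$ on the walls and an ``$(n+1)$-type'' condition there. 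By the corridor rigidity, every configuration of $A_n$ compatible with $\omega_1^{(n)}$ restricts on $B$ to one fixed pattern $\beta_0$, and none compatible with $\omega_2^{(n)}$ does; since both boundary conditions extend (being restrictions of genuine points) the partition functions are positive, so $\pi^{\omega_1^{(n)}}_{A_n}(\beta_0)=1$ and $\pi^{\omega_2^{(n)}}_{A_n}(\beta_0)=0$ for \emph{every} $\Phi$. Definition~\ref{SSMspec} then forces $1=\bigl|\pi^{\omega_1^{(n)}}_{A_n}(\beta_0)-\pi^{\omega_2^{(n)}}_{A_n}(\beta_0)\bigr|\le|B|\,f(n)\to0$, a contradiction.

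The step I expect to be the real obstacle is engineering the full rigidity used in item~(2): producing a boundary condition under which $\beta_0$ on $B$ is \emph{impossible} is routine, but obtaining $\pi^{\omega_1^{(n)}}_{A_n}(\beta_0)=1$ — i.e.\ the corridor, once its walls and one end are fixed, has literally no internal freedom at the window $B$ — is precisely what dictates the shape of $\const$. A plain barbell is too rigid globally (it is not even dismantlable), whereas sealing it with a safe symbol or any chordal/tree-decomposable completion is too flexible (it restores TSSM for all boards by Theorem~\ref{hierarchy}); the counterexample must thread between these, and verifying that the $\const$ of Figure~\ref{counterexample} does so is the heart of the argument.
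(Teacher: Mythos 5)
Your proposal is correct and follows essentially the same route as the paper: the graph of Figure~\ref{counterexample} (the $4$-cycle $a\,b\,d\,c$ with loops at $a,b,c$) is exactly an induced $1$-barbell on $\{b,d,c\}$ plus one looped vertex $a$ that restores dismantlability, the walls $\sigma$ are alternating diagonals $adad\cdots$ which seal the corridor into $\neig(a)\cap\neig(d)=\{b,c\}$ with $b\not\sim c$ (hence constant-$b$ or constant-$c$ rigidity), and item~(2) is proved by the same rate-independent argument $\pi_A^{\omega_1}(\beta)=1$, $\pi_A^{\omega_2}(\beta)=0$ with the disagreement set confined to one far end. Item~(3) is likewise handled by dismantlability plus Proposition~\ref{dism-charact}/\ref{dism-highRate}, so the only work left in your plan is the finite verification of corridor rigidity, which you correctly identify as the crux and which this $4$-vertex graph passes.
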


\begin{proof}
Consider the constraint graph $\const = (\cVert,\cEdg)$ given by
\begin{equation}
\cVert = \{a,b,c,d\} \text{ and } \cEdg = \left\{\{a,a\},\{b,b\},\{c,c\},\{a,b\},\{a,c\},\{b,d\},\{c,d\}\right\}.
\end{equation}

It is easy to check that $\const$ is dismantlable (see Figure \ref{counterexample}).

\begin{figure}[ht]
\centering
\includegraphics[scale = 0.775]{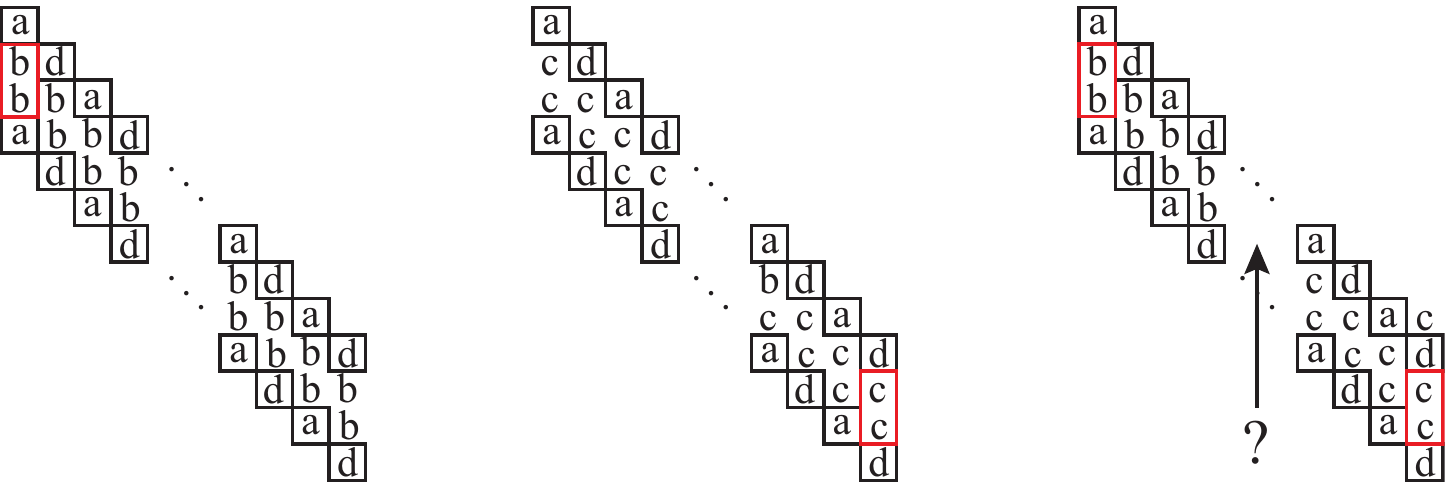} 
\caption{Two incompatible configurations $\alpha$ and $\beta$ (both in red), for a fixed configuration $\sigma$.}
\label{counterexample-conf}
\end{figure}

By Proposition \ref{dism-charact}, we know that $(3)$ holds. However, if we consider the configurations $\alpha$ and $\beta$ (the pairs $bb$ and $cc$ in red, respectively) and the fixed configuration $\sigma$ (the diagonal alternating configurations $adad\cdots$ in black) shown in Figure \ref{counterexample-conf}, we have that $[\alpha\sigma]^{\Z^2}_\const,[\sigma\beta]^{\Z^2}_\const \neq \emptyset$, but $[\alpha\sigma\beta]^{\Z^2}_\const = \emptyset$, and TSSM cannot hold. This construction works in a similar way to the construction in the proof ($(2) \implies (1)$) of Proposition \ref{tree-tssm}.

Now assume the existence of a Gibbs $(\board,\const,\Phi)$-specification $\pi$ satisfying SSM with decay function $f$. Call $A$ the shape enclosed by the two diagonals made by alternating sequence of $a$'s and $d$'s shown in Figure \ref{counterexample-conf2} (in grey), and let $x_l$ and $x_r$ be the sites  (in red) at the left and right extreme of $A$, respectively. If we denote by $\sigma$ the boundary configuration of the $a$ and $d$ symbols on $\partial A \setminus \{x_r\}$, and $\alpha_1 = b^{\{x_r\}}$ and $\alpha_2 = c^{\{x_r\}}$, it can be checked that $[\sigma\alpha_1]^\board_\const, [\sigma\alpha_2]^\board_\const \neq \emptyset$. Then, take $\omega_1 \in [\sigma\alpha_1]^\board_\const$, $\omega_2 \in [\sigma\alpha_2]^\board_\const$ and call $B = \{x_l\}$ and $\beta = b^B$.

\begin{figure}[ht]
\centering
\includegraphics[scale = 0.775]{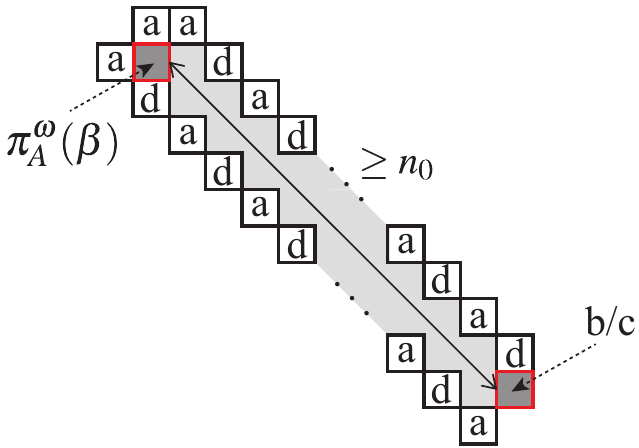} 
\caption{A shape and configurations where the SSM property fails for any Gibbs $(\Z^2,\const,\Phi)$-specification.}
\label{counterexample-conf2}
\end{figure}

Notice that, similarly as before, the symbols $b$ and $c$ force repetitions of themselves, respectively, from $x_r$ to $x_l$ along $A$. Then, we have that $\pi_A^{\omega_1}(\beta) = 1$ and $\pi_A^{\omega_2}(\beta) = 0$. Now, since we can always take an arbitrarily long set $A$, suppose that $\dist(x_l,x_r) \geq n_0$, with $n_0$ such that $f(n_0) < 1$. Therefore,
\begin{equation}
1 = |1 - 0| = \left| \pi_A^{\omega_1}(\beta) - \pi_A^{\omega_2}(\beta) \right| \leq |B|f\left(\dist(B,\Sigma_{\partial A}(\omega_1,\omega_2))\right) \leq f(n_0) < 1,
\end{equation}
which is a contradiction.
\end{proof}

\begin{proposition}
There exists a dismantlable graph $\const$ and a constant $\gamma_0 > 0$, such that:
\begin{enumerate}
\item the set of n.n. interactions $\Phi$ for which $(\Z^2,\const,\Phi)$ satisfies exponential SSM is nonempty,
\item there is no n.n. interaction $\Phi$ for which $(\Z^2,\const,\Phi)$ satisfies exponential SSM with decay rate greater than $\gamma_0$,
\item $\mathrm{Hom}(\Z^2,\const)$ satisfies SSF (in particular, $\mathrm{Hom}(\Z^2,\const)$ satisfies TSSM), and
\item for every $\gamma > 0$, there exists a n.n. interaction $\Phi$ for which $(\Z^2,\const,\Phi)$ satisfies exponential WSM with decay function $f(n) = Ce^{-\gamma n}$.
\end{enumerate}

Moreover, there exists a family $\{\const^q\}_{q \in \N}$ of dismantlable graphs with this property where $|\const^q| \to \infty$ as $q \to \infty$.
\end{proposition}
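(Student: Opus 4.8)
The plan is to produce one explicit dismantlable graph $\const$ (and then a family $\{\const^q\}_{q\in\N}$ with $|\const^q|\to\infty$) built around a loopless complete subgraph $Q\subseteq\const$ of size $k=|Q|\ge 3$. The key point is that $Q$ can be ``pinned'' as the effective alphabet of a one‑dimensional corridor in $\Z^2$, and loopless‑cliqueness of $Q$ caps the attainable SSM rate, while enough extra vertices are wired in to make $\homspace$ single‑site fillable and $\const$ dismantlable without creating a safe symbol. First I would write $\const$ down and check three finite combinatorial facts: (a) $\const$ is dismantlable, by exhibiting an explicit sequence of folds ending at a single looped vertex; (b) $\const$ has no safe symbol; (c) any assignment of vertices of $\const$ to the (at most four) mutually non‑adjacent neighbours of a site of $\Z^2$ admits a common neighbour, i.e.\ every locally admissible configuration is globally admissible, so $\homspace$ is SSF. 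By Proposition~\ref{ssftssm}, (c) yields TSSM, which is part (3); and since $\const$ is dismantlable, Proposition~\ref{dism-highRate} gives part (4), indeed with arbitrarily high WSM rate.

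\textbf{Part (1).} I would exhibit a single n.n.\ interaction $\Phi$ and prove $(\Z^2,\const,\Phi)$ satisfies exponential SSM. The candidate is an interaction strongly favouring a persistent vertex (for members of the family with $k$ large the uniform interaction should already suffice). Exponential SSM is then obtained by a disagreement‑percolation / Dobrushin‑type estimate in the spirit of the proof of Proposition~\ref{dism-highRate}: via the van den Berg coupling of Theorem~\ref{vandenberg} one bounds the probability of a length‑$n$ path of disagreement by $C\rho^{n}$ with $\rho<1$, and Lemma~\ref{ssmSing} upgrades this single‑site control to SSM.

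\textbf{Part (2), the core.} Fix an arbitrary n.n.\ interaction $\Phi$. Take a long thin rectangle $A_n\Subset\Z^2$ whose boundary $\partial A_n$ carries the periodic ``wall'' pattern that forces the interior row of $A_n$ to take values only in $Q$, and let $\omega_1,\omega_2\in\homspace$ agree on $\partial A_n$ except at a single site $y$ adjacent to one short side. Conditioned on $\left.\omega_i\right|_{\partial A_n}$, the law of the interior row is a one‑dimensional Markov chain with transfer matrix $M=M(\Phi)$, a symmetric matrix indexed by $Q$ with zero diagonal (since $Q$ is loopless) and strictly positive off‑diagonal entries. Now $\mathrm{tr}\,M=0$ forces $\lambda_1=-\sum_{j\ge2}\lambda_j$, so the sum of the absolute values of the negative eigenvalues is at least $\lambda_1$; there being at most $k-1$ of them, the most negative has modulus $\ge\lambda_1/(k-1)$, while Perron--Frobenius (using that $Q$ is non‑bipartite) gives $|\lambda_j|<\lambda_1$ for $j\ge2$. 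Hence the spectral gap ratio satisfies $1/(k-1)\le|\lambda_{(2)}|/\lambda_1<1$, a bound uniform in $\Phi$. A transfer‑matrix computation then shows that, choosing the two pinned values and the test site $x$ and symbol $\beta$ near the other short side appropriately — possibly depending on $\Phi$, which is allowed, using that the relevant subdominant eigenvector cannot be constant on $Q$ and hence is non‑orthogonal to some $e_i-e_j$ — one has, along the appropriate parity of $n$,
\[
\left|\pi_{A_n}^{\omega_1}(\beta)-\pi_{A_n}^{\omega_2}(\beta)\right|\;\ge\;c\,(k-1)^{-n},
\]
with $c>0$ independent of $\Phi$, whereas $\dist\!\left(x,\Sigma_{\partial A_n}(\omega_1,\omega_2)\right)=\dist(x,y)$ is of order $n$. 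Therefore no exponential SSM rate can exceed $\gamma_0:=\log(k-1)$, which is part (2); for the family one uses loopless cliques of growing size, so that $\gamma_0(q)=\log(k_q-1)\to\infty$ while $|\const^q|\to\infty$.

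\textbf{Main obstacle.} The hard part is the gadget design in the first step: one must fit into a single small graph with no safe symbol the three competing requirements that (i) $\homspace$ is SSF on $\Z^2$ — which already forces at least five vertices and a delicate family of common‑neighbour checks; (ii) a periodic boundary pattern confines a corridor in $\Z^2$ exactly to the loopless clique $Q$, so that the corridor's effective alphabet is $Q$ and nothing else (in particular the looped ``wall'' vertices and any near‑universal vertex must be kept out of the corridor interior); and (iii) $\const$ remains dismantlable. Reconciling (i)--(iii) is the creative step. A secondary, more routine point is keeping the constant $c$ in part (2) from degenerating as $\Phi$ ranges over all interactions, which is handled by choosing the pinned boundary values adaptively to $\Phi$ as indicated above.
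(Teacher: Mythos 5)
Your blueprint is essentially the paper's proof: the same corridor/transfer-matrix mechanism for part (2) (a periodic wall pins a one-dimensional strip to a loopless clique $Q$, the induced chain has a symmetric zero-trace transition matrix, so some eigenvalue has modulus at least $1/(|Q|-1)$, giving $\gamma_0=\log(|Q|-1)$), the same SSF/dismantlability checks for (3) and (4), and a disagreement-percolation argument for (1). The paper organizes part (2) slightly differently — it bounds $|P^n_{kj}-P^n_{1j}|\le 3Ce^{-\gamma n}$ from SSM and compares with $\lambda_*^n=\sum_k \frac{c_k}{\vec\ell_j}(P^n_{kj}-P^n_{1j})$, then takes logarithms and lets $n\to\infty$ — which makes your worry about the constant $c$ being uniform in $\Phi$ moot: for each fixed $\Phi$ the constants wash out in the limit, so no adaptive choice of pinned values is needed.

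Two concrete points. First, the one step you explicitly defer — the gadget — is the substance of the paper's proof, and it is simpler than your ``obstacle'' paragraph suggests: $\const^q$ is $\complete_{q+1}$ together with two extra vertices $a$ (looped) and $b$ (unlooped), each adjacent to every clique vertex, with $a\not\sim b$. The wall is $\cdots 0b0b\cdots$ for a clique vertex $0$, which confines the corridor to the remaining $q$ clique vertices; $a$ is persistent (fold $b$ into $a$, then fold the clique), there is no safe symbol since $a\not\sim b$, and SSF holds for $q\ge 4$. A proof that merely lists desiderata without exhibiting a graph meeting them is incomplete, even if the desiderata are the right ones. Second, your primary candidate for part (1), an interaction strongly favouring a persistent vertex, would not work for this graph: biasing heavily toward the looped vertex $a$ makes the single-site conditional distributions depend drastically on whether $b$ appears on the boundary (which forbids $a$), so the Dobrushin/percolation quantity is close to $1$ rather than small. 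The paper instead takes the \emph{uniform} interaction and verifies $Q(\pi)\le 6/(q-1)<p_{\rm c}(\Z^2)$ for $q\ge 12$ — which is the hedge you mention second; it should be your main route, and it only establishes (1) for large $q$, which suffices for the existence statement.
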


\begin{proof}
By adapting \cite[Theorem 7.3]{1-adams} to the context of Gibbs $(\Z^2,\const,\Phi)$-specifications $\pi$, we know that if $\pi$ is such that $Q(\pi) < p_{\rm c}(\Z^2)$, then $\pi$ satisfies exponential SSM, where $p_{\rm c}(\Z^2)$ denotes the critical probability for Bernoulli site percolation on $\Z^2$ and $Q(\pi)$ is defined as
\begin{equation}
Q(\pi) := \underset{\omega_1,\omega_2}{\max} \frac{1}{2} \sum_{u \in \cVert} \left|\pi^{\omega_1}_{\{0\}}(u) - \pi^{\omega_2}_{\{0\}}(u)\right|.
\end{equation}

Given $q \in \N$, consider the graph $\const^q$ as shown in Figure \ref{dismantlable-exmp}. The graph $\const^q$ consists of a complete graph $\complete_{q+1}$ and two other extra vertices $a$ and $b$ both adjacent to every vertex in the complete graph. In addition, $a$ has a loop, and $a$ and $b$ are not adjacent. Notice that $\const^q$ is dismantlable (we can fold $a$ into $b$ and then we can fold every vertex in $\complete_{q+1}$ into $a$) and $a$ is a persistent vertex for $\const^q$. Since $\const^q$ is dismantlable, by Proposition \ref{dism-charact}, for every $\gamma > 0$, there exists a n.n. interaction $\Phi$ for which the Gibbs $(\Z^2,\const,\Phi)$-specification satisfies exponential WSM with decay function $f(n) = Ce^{-\gamma n}$.

Take $\pi$ to be the uniform Gibbs specification on $\mathrm{Hom}(\Z^2,\const)$ (i.e. $\Phi \equiv 0$). Then the definition of $\pi^{\omega}_{\{0\}}(u)$ implies that, whenever $\pi^{\omega}_{\{0\}}(u) \neq 0$ for $u \in \cVert$,
\begin{align}
 \frac{1}{q + 2} \leq \pi^{\omega}_{\{0\}}(u) \leq \frac{1}{q - 1}.
\end{align}

Notice that $\pi^{\omega}_{\{0\}}(a) = 0$ if and only if $b$ appears in $\left.\omega\right|_{\partial\{0\}}$. Similarly, $\pi^{\omega}_{\{0\}}(b) = 0$ if and only if $a$ or $b$ appear in $\left.\omega\right|_{\partial\{0\}}$, and for $u \neq a,b$, $\pi^{\omega}_{\{0\}}(u) = 0$ if and only if $u$ appears in $\left.\omega\right|_{\partial\{0\}}$. Since $|\partial\{0\}| = 4$, at most 8 terms vanish in the definition of $Q(\pi)$ (4 for each $\omega_i$, $i=1,2$). Then, since $|\const^q| = q + 3$ and $q \geq 1$,
\begin{equation}
\frac{1}{2} \sum_{u \in \cVert} \left|\pi^{\omega_1}_{\{0\}}(u) - \pi^{\omega_2}_{\{0\}}(u)\right| \leq	\frac{1}{2} \left(8\frac{1}{q - 1} + (q+3)\left| \frac{1}{q - 1} - \frac{1}{q + 2}\right|\right) \leq \frac{6}{q-1}.
\end{equation}

Then, if $q > 1 + \frac{6}{p_{\rm c}(\Z^2)}$, we have that $Q(\pi) < p_{\rm c}(\Z^2)$, so $\pi$ satisfies exponential SSM. Since $p_{\rm c}(\Z^2) > 0.556$ (see \cite[Theorem 1]{4-berg}), it suffices to take $q \geq 12$. In particular, the set of n.n. interactions $\Phi$ for which the Gibbs $(\Z^2,\const^q,\Phi)$-specification satisfies exponential SSM is nonempty if $q > 12$.

\begin{figure}[ht]
\centering
\includegraphics[scale = 0.8]{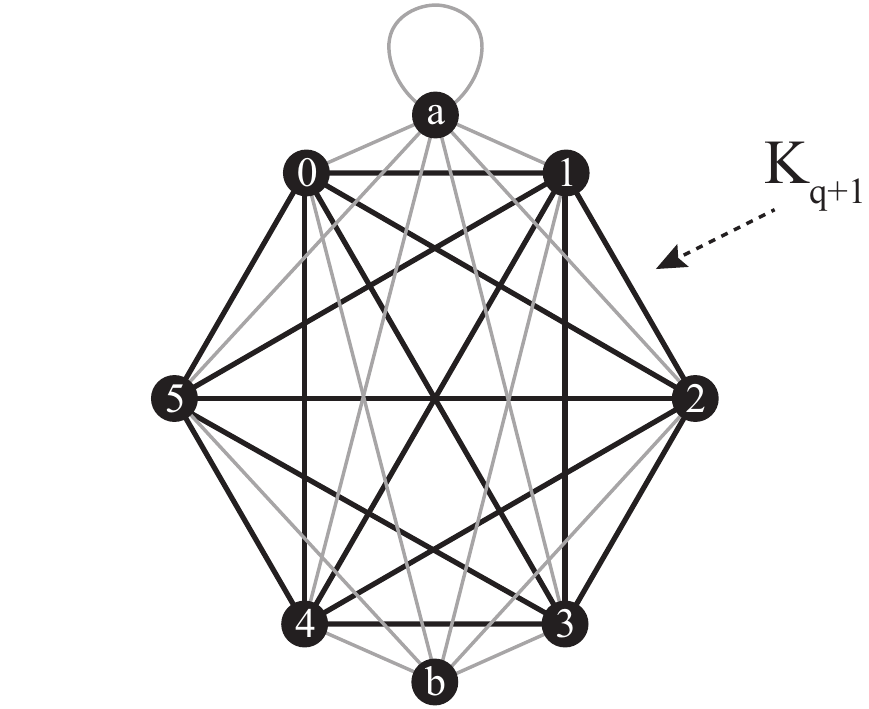} 
\caption{The graph $\const^q$, for $q = 5$.}
\label{dismantlable-exmp}
\end{figure}

Now, let $\pi$ be an arbitrary Gibbs $(\Z^2,\const^q,\Phi)$-specification that satisfies SSM with decay function $f(n) = Ce^{-\gamma n}$, for some $C$ and $\gamma$ that could depend on $q$ and $\Phi$. For now, we fix $q$ and consider an arbitrary $\Phi$.

Consider a configuration like the one shown in Figure \ref{colour-channel}. Define $\tilde{\cVert} = \cVert \backslash\{0,a,b\}$, $\tilde{\cEdg} = \cEdg[\tilde{\cVert}]$, and let $\tilde{\const}^q = \const^q[\tilde{\cVert}]$. Notice that $\tilde{\const}^q$ is isomorphic to $\complete_q$. Construct the auxiliary n.n. interaction $\tilde{\Phi}: \tilde{\cVert} \cup \tilde{\cEdg} \to (-\infty,0]$ given by $\tilde{\Phi}(u) = \Phi(u) + \Phi(u,0) + \Phi(u,b)$, for every $u \in \tilde{\cVert}$ (representing the interaction with the ``wall'' $\cdots0b0b\cdots$), and $\tilde{\Phi} \equiv \left.\Phi\right|_{\tilde{\cEdg}}$. The constrained n.n. interaction $(\tilde{\const}^q,\tilde{\Phi})$ induces a Gibbs $(\Z,\tilde{\const}^q,\tilde{\Phi})$-specification $\tilde{\pi}$ that inherits the exponential SSM property from $(\Z^2,\const^q,\Phi)$ with the same decay function $f(n) = Ce^{-\gamma n}$. It follows that there is a unique (and therefore, stationary) n.n. Gibbs measure $\mu$ for $\tilde{\pi}$, which is a Markov measure with some symmetric $q \times q$ transition matrix $P$ with zero diagonal (see \cite[Theorem 10.21]{1-georgii} and \cite{1-chandgotia}).

\begin{figure}[ht]
\centering
\includegraphics[scale = 0.65]{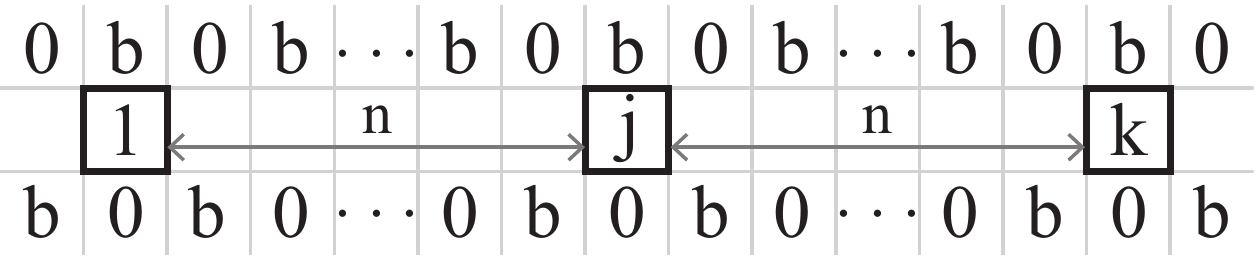} 
\caption{A Markov chain embedded in a $\Z^2$ Markov random field.}
\label{colour-channel}
\end{figure}

Let $1 = \lambda_1 \geq \lambda_2 \geq \cdots \geq \lambda_q$ be the eigenvalues of $P$. Since $\mathrm{tr}(P) = 0$, we have that $\sum_{i=1}^q \lambda_i = 0$. Let $\lambda_* = \max\{|\lambda_2|,|\lambda_q|\}$. Then, since $\lambda_1 = 1$, we have that $1 \leq \sum_{i=2}^q |\lambda_i| \leq (q-1)\lambda_*$. Therefore, $\lambda_* \geq \frac{1}{q-1}$.

Since $P$ is stochastic, $P\vec{1} = \vec{1}$ and, since $P$ is primitive, $\lambda_* < 1$ (see \cite[Section 3.2]{1-minc}). W.l.o.g., suppose that $|\lambda_2| = \lambda_*$ and let $\vec{\ell}$ be the left eigenvector associated to $\lambda_2$ (i.e. $\vec{\ell} P = \lambda_2 \vec{\ell})$. Then $\vec{\ell} \cdot \vec{1} = 0$, because $\lambda_2 \vec{\ell} \cdot \vec{1} = (\vec{\ell} \cdot P) \cdot \vec{1} = \vec{\ell} \cdot (P \cdot \vec{1}) = \vec{\ell} \cdot \vec{1}$, so $(1-\lambda_2) \vec{\ell} \cdot 1 = 0$. Then, $\vec{\ell} \in \left< \vec{e}_2 - \vec{e}_1, \vec{e}_3 - \vec{e}_1, \dots, \vec{e}_q - \vec{e}_1\right>_\R$, so we can write $\vec{\ell} = \sum_{k=2}^{q} c_k (\vec{e}_k - \vec{e}_1)$, where $\{\vec{e}_k\}_{k=1}^{q}$ denotes the canonical basis of $\R^q$ and $c_k \in \R$. We conclude that $\lambda_*^n \vec{\ell} = \vec{\ell} \cdot P^n = \sum_{k=2}^{q} c_k (\vec{e}_k - \vec{e}_1) \cdot P^n = \sum_{k=2}^{q} c_k (P^n_{k \bullet} - P^n_{1 \bullet})$, where $P^n_{i \bullet}$ is the vector given by the $i$th row of $P$.

Consider $j \in \{1,\dots,q\}$ such that $\vec{\ell}_j > 0$. Then $\lambda_*^n = \sum_{k=2}^{q} \frac{c_k}{\vec{\ell}_j} (P^n_{kj} - P^n_{1j})$ and
\begin{align}
	&	\left|P^n_{kj} - P^n_{1j}\right|																								\\	
=	& 	\left|\mu\left(\alpha(0)= j \middle\vert \alpha(-n) = k\right) 				-	\mu\left(\alpha(0)= j \middle\vert \alpha(-n) = 1\right)\right|				\\
\leq	&	\left|\mu\left(\alpha(0)= j \middle\vert \alpha(-n) = k\right) 				-	\mu\left(\alpha(0)= j \middle\vert \alpha(-n) = k, \alpha(n) = 1\right)\right|	\\
+	&	\left|\mu\left(\alpha(0)= j \middle\vert \alpha(-n) = k,\alpha(n) = 1\right)	-	\mu\left(\alpha(0)= j \middle\vert \alpha(-n) = 1, \alpha(n) = k\right)\right|	\\
+	&	\left|\mu\left(\alpha(0)= j \middle\vert \alpha(-n) = 1, \alpha(n) = k\right)	-	\mu\left(\alpha(0)= j \middle\vert \alpha(-n) = 1\right)\right|				\\
\leq	& 	3Ce^{-\gamma n},
\end{align}
by the exponential SSM property of $\tilde{\pi}$ and using that $\mu\left(\alpha(0)= j \middle\vert \alpha(-n) = k\right)$ is a weighted average $\sum_{m \in \tilde{\cVert}} \mu\left(\alpha(0)= j \middle\vert \alpha(-n) = k, \alpha(n) = m\right)\mu\left(\alpha(n) = m \middle\vert \alpha(-n) = k\right)$, along with a similar decomposition of $\mu\left(\alpha(0)= j \middle\vert \alpha(-n) = 1\right)$. Therefore, $\lambda_*^n \leq 3C(q-1)\underset{k}{\max} {\frac{|c_k|}{|\vec{\ell}_j|}}e^{-\gamma n}$. By taking logarithms and letting $n \to \infty$, we conclude that $\gamma \leq -\log \lambda_* \leq \log(q-1)$. Then, since $\Phi$ was arbitrary, there is no n.n. interaction $\Phi$ for which $(\Z^2,\const^q,\Phi)$ satisfies exponential SSM with decay rate greater than $\gamma_0 := \log(q-1)$.

Finally, it is easy to see that if $q \geq 4$, $\mathrm{Hom}(\Z^2,\const)$ satisfies SSF. Therefore, by Proposition \ref{ssftssm}, $\mathrm{Hom}(\Z^2,\const^q)$ satisfies TSSM (with gap $g = 2$).
\end{proof}

\section*{Acknowledgements}

We would like to thank Prof. Brian Marcus for his valuable contributions; his many useful observations and suggestions greatly contributed to this work. We also thank Nishant Chandgotia for discussions regarding dismantlable graphs.

\printbibliography

\end{document}